\documentclass[12pt]{amsart}    
\usepackage{amscd}

\def\cal#1{\mathcal{#1}}

%
%
\def\NZQ{\Bbb}               
\def\NN{{\NZQ N}}

\def\ZZ{{\NZQ Z}}
\def\PP{{\NZQ P}}

\def\PP{{\NZQ P}}

%
%
\def\frk{\frak}               

\def\mm{{\frk m}}

 %
%

%
\def\opn#1#2{\def#1{\operatorname{#2}}} 
%
\opn\chara{char}
\opn\length{\ell}
\opn\pd{pd}
\opn\rk{rk}
\opn\projdim{proj\,dim}
\opn\rank{rank}
\opn\depth{depth}
\opn\grade{grade}
\opn\height{ht}
\opn\embdim{emb\,dim}
\opn\codim{codim}
\def\OO{\mathcal{O}}
\opn\Tr{Tr}
\opn\bigrank{big\,rank}
\opn\superheight{superheight}\opn\lcm{lcm}
\opn\trdeg{tr\,deg}%
\opn\reg{reg}
\opn\lreg{lreg}
%
\opn\div{div}
\opn\Div{Div}
\opn\WDiv{WDiv}
\opn\cl{cl}
\opn\Cl{Cl}
%
%
\opn\Spec{Spec}
\opn\Supp{Supp}
\opn\supp{supp}
\opn\Sing{Sing}
\opn\Ass{Ass}
\opn\Assh{Assh}
\opn\Min{Min}
\opn\Reg{Reg}
%
%
\opn\Ann{Ann}
\opn\Rad{Rad}
\opn\Soc{Soc}
\opn\Socle{Socle}
%
%
\opn\Ker{Ker}
\opn\Coker{Coker}
\opn\Im{Im}
\opn\Hom{Hom}
\opn\Mor{Mor}
\opn\Tor{Tor}
\opn\Ext{Ext}
\opn\End{End}
\opn\Aut{Aut}
\opn\id{id}

\opn\nat{nat}
\opn\pff{pf}
\opn\Pf{Pf}
\opn\GL{GL}
\opn\SL{SL}
\opn\mod{mod}
\opn\ord{ord}
\opn\Proj{Proj}
%
%
\opn\aff{aff}
\opn\con{conv}
\opn\relint{relint}
\opn\st{st}
\opn\lk{lk}
\opn\cn{cn}
\opn\core{core}
\opn\vol{vol}
\opn\link{link}
\opn\star{star}
\opn\gr{gr}

%
%

\def\pot#1#2{#1[\kern-0.28ex[#2]\kern-0.28ex]}

%
\opn\dirlim{\underrightarrow{\lim}}
\opn\inivlim{\underleftarrow{\lim}}
%
%
%

\let\dirsum=\oplus
\let\tensor=\otimes
\let\iso=\cong
\let\Union=\bigcup

\let\Dirsum=\bigoplus

%
%
\let\to=\rightarrow

\let\To=\longrightarrow

\def\Implies{\ifmmode\Longrightarrow \else
     \unskip${}\Longrightarrow{}$\ignorespaces\fi}
\def\implies{\ifmmode\Rightarrow \else
     \unskip${}\Rightarrow{}$\ignorespaces\fi}
\def\iff{\ifmmode\Longleftrightarrow \else
     \unskip${}\Longleftrightarrow{}$\ignorespaces\fi}

\let\:=\colon
\opn\H{H}
\opn\Pic{Pic}

\newtheorem{Theorem}{Theorem}
\newtheorem{Corollary}[Theorem]{Corollary}

\newtheorem{Lemma}[Theorem]{Lemma}
\newtheorem{Proposition}[Theorem]{Proposition}
\newtheorem{Remark}{Remark}
\newtheorem{Example}[Remark]{Example}

%
%
\let\epsilon\varepsilon
%
%
\textwidth=15cm
\textheight=22cm
\topmargin=0.5cm
\oddsidemargin=0.5cm
\evensidemargin=0.5cm
\pagestyle{plain}
%
%
%
%

\def\OO{{\cal O}} 

\opn\inii{in}
\opn\inim{inm}
\opn\set{set}
\def\pnt{{\raise0.5mm\hbox{\large\bf.}}}

\def\nondiv{\;{\not}\hspace*{-0.25em}\mid}

\begin{document}

\title{On non-vanishing of cohomologies of generalized Raynaud polarized surfaces}
\author{Yukihide Takayama}
\address{Yukihide Takayama, Department of Mathematical
Sciences, Ritsumeikan University, 
1-1-1 Nojihigashi, Kusatsu, Shiga 525-8577, Japan}
\email{takayama@se.ritsumei.ac.jp}

\def\Coh#1#2{H_{\mm}^{#1}(#2)}
\def\eCoh#1#2#3{H_{#1}^{#2}(#3)}

\newcommand{\AppTh}{Theorem~\ref{approxtheorem} }
\def\da{\downarrow}
\newcommand{\ua}{\uparrow}
\newcommand{\namedto}[1]{\buildrel\mbox{$#1$}\over\rightarrow}
\newcommand{\bdel}{\bar\partial}
\newcommand{\proj}{{\rm proj.}}

\newenvironment{myremark}[1]{{\bf Note:\ } \dotfill\\ \it{#1}}{\\ \dotfill
{\bf Note end.}}
\newcommand{\transdeg}[2]{{\rm trans. deg}_{#1}(#2)}
\newcommand{\mSpec}[1]{{\rm m\hbox{-}Spec}(#1)}

\newcommand{\tbf}{{{\Large To Be Filled!!}}}

\pagestyle{plain}
\maketitle

\begin{abstract}
We consider a family of slightly extended version of the Raynaud's
surfaces $X$ over the field of positive characteristic with
Mumford-Szpiro type polarizations ${\cal Z}$, which have Kodaira
non-vanishing $H^1(X, {\cal Z}^{-1})\ne{0}$.
The surfaces are at least normal but smooth under a
special condition. We compute the cohomologies 
$H^i(X, {\cal Z}^n)$ for $i, n\in\ZZ$ and study their
(non-)vanishing. Finally, 
we give a fairly large family of non Mumford-Szpiro type
polarizations ${\cal Z}_{a,b}$ with 
Kodaira non-vanishing.

MSC classification: (Primary: 14J25, 14J17; secondary: 13D99)
\end{abstract}

\def\gCoh#1#2#3{H_{#1}^{#2}\left(#3\right)}
\def\subsetneq{\raisebox{.6ex}{{\small $\; \underset{\ne}{\subset}\; $}}}
\opn\Exc{Exc}

\section{Introduction}

Let $X$ be a projective variety over an algebraically closed field $k$
and ${\cal Z}$ an ample invertible sheaf on $X$.
It is well known that Kodaira vanishing theorem does not hold 
if the characteristic of the field $\chara(k)=p$ is positive.
The first counter-example has been found by Raynaud \cite{Ray}.  
He constructed a smooth polarized surface $(X, {\cal Z})$
with  $H^1(X, {\cal Z}^{-1})\ne 0$ using Tango-structure \cite{Tan}.
Mukai \cite{Mu} generalized Raynaud's construction to obtain 
polarized
smooth projective varieties $(X, {\cal Z})$ of any dimension with 
$H^1(X, {\cal Z}^{-1})\ne{0}$. He also showed that, 
if a smooth projective surface $X$ is a counter-example to 
Kodaira vanishing, then $X$ must be
either hyperelliptic with $p=2,3$ or of general type.
The construction similar to  Mukai's has been also studied 
by Takeda \cite{Take3,Take4,Take07} and Russel \cite{Rus}.
Mumford \cite{M3} and Szpiro \cite{Szpiro} gave a sufficient condition
for a polarized smooth projective surface to be a counter-example to 
Kodaira vanishing and pointed out that Raynaud's examples are its instances.
Szpiro \cite{Szpiro2} and Lauritzen-Rao \cite{LR}
also gave  different counter-examples to Kodaira vanishing.
Mumford \cite{M1} constructed a normal polarized surface $(X, {\cal Z})$ with
$H^1(X, {\cal Z}^{-1})\ne{0}$ but it is not known whether
desingularizations of $X$ satisfy Kodaira vanishing.

The aim of this paper is to study (non-)vanishing of
$H^i(X, {\cal Z}^{n})$, $i, n\in\ZZ$, for 
a family of surfaces $X$ with Mumford-Szpiro type polarizations
${\cal Z}$, which is an extension of Raynaud's counter-examples.
Recall that Raynaud's examples are cyclic covers of ruled surfaces 
over smooth projective curves of genus $g \geq 2$.
The degree $\ell$ of the cyclic covers is $\ell=2$ 
for $p\geq 3$ and $\ell=3$ for $p=2$.
Notice that Kodaira vanishing holds for 
ruled surfaces \cite{Tan2, Mu}.
The smooth curve must have a special kind of divisor called
Tango-structure (Tango-Raynaud structure) and this gives a strong
restriction to the genus $g$ of the curve, i.e. $p$ must divide $2g -2$.
If we consider a weaker condition called pre-Tango, which is satisfied
by any smooth curves with $g\geq p$ (\cite{TakeYoko}), but in this case 
the obtained surface is singular.

As is implicitly described in \cite{Ray}, 
we can choose the degree $\ell$ of cyclic cover more freely.
In Mukai's construction \cite{Mu}, $\ell$ can be 
any integer $\geq 2$ with $(p,\ell)=1$ (and a mild condition),
but then we must take normalization to construct the cyclic cover.
In this paper, we consider an additional condition
$\ell\mid p+1$. 
This assures the normality of the cyclic cover
without normalization 
and moreover the computation of cohomologies $H^i(X, {\cal Z}^n)$ 
is much easier.
Thus, we obtain a fairly large class of surfaces over the fields of
positive characteristics containing many counter-examples to Kodaira
vanishing, together with formulas for cohomologies $H^i(X, {\cal Z}^n)$.
 These surfaces are normal if the base curve $C$ has 
a pre-Tango structure and smooth if $C$ has a Tango structure.

This family would be particularly interesting in the sense that
this provides a class of finitely generated graded integral
algebras $(R, \mm)$,
over the fields of positive characteristics whose graded local cohomologies
$\Coh{i}{R}$, $i<\dim{R}$, do not necessarily vanish at negative
degrees. 
For a polarized variety $(X, {\cal L})$, we consider the 
section ring $(R, \mm):= (\Dirsum_{n\geq 0}H^0(X, {\cal L}^n), 
\Dirsum_{n>0}H^0(X, {\cal L}^n))$, which is a finitely generated 
graded algebra over $k=H^0(X, \OO_X)$ with natural $\NN$-grading.
We have $X \iso \Proj(R)$. 
Then by computing $\check{{\rm C}}$ech complexes we know 
that $\Coh{0}{R}=0$ and we have 
\begin{equation*}
0\To R \To \Dirsum_{n\in\ZZ}H^0(X, {\cal L}^n) \To \Coh{1}{R}\To 0
\end{equation*}
and $\Coh{i+1}{R} \iso \Dirsum_{n\in\ZZ}H^i(X, {\cal L}^n)$ for 
$i\geq 1$.
From this, we immediately know that we always have 
$[\Coh{j}{R}]_n=0$ for $j=0,1$ and for all 
$n<0$ and moreover, if Kodaira type vanishing 
holds, then we have $[\Coh{i}{R}]_n=0$ for all $n<0$ and $i<\dim{R}$
(see \cite{HS}).
It is known that if $X$ has at most $F$-rational singularities and 
$X$ is obtained by generic mod $p$ reduction from a variety with at 
most rational singularities, then we have 
Kodaira type vanishing (see \cite{HW96, Hara, HS}).  
From our generalized Raynaud surfaces, we obtain 
examples of $R$ with $\dim R=3$ different from this type, 
whose local cohomologies  can be studied by analyzing cohomologies of 
vector bundles over smooth curves of  genus $\geq 2$.

In section~2, we will present the construction of our generalized
Raynaud surface $X$, which is the cyclic cover of degree $\ell$ of the
ruled surface $P$ over a curve $C$ with  pre-Tango structure.
In section~3, we show that $K_X$ is ample if $(p,\ell)=(3,4)$ and 
$p\geq 5$ (Proposition~\ref{prop:ampleKx}) and 
in this case we have Kodaira type vanishing 
$H^1(X, K_X^{-1})=0$ (Proposition~\ref{prop:KVforKx}). 
Then we apply the  Mumford-Szpiro type sufficient condition 
for Kodaira non-vanishing to obtain the polarization $(X,{\cal Z})$ 
with Kodaira non-vanishing (Proposition~\ref{prop:polarization}). 
Then we will compute cohomologies $H^i(X, {\cal Z}^n)$, $i,n\in\ZZ$,
in section~4 (Propositions~\ref{h2},~\ref{h1pos} and~\ref{h0},
Theorem~\ref{h1neg}, Corollary~\ref{h1neg-detailed}) 
and show some (non-)vanishing results 
(Corollaries~\ref{h2vanishing},~\ref{h1-for-p=2,3} and 
~\ref{h0-refined}, Theorem~\ref{result1}).
Finally, we give a class of polarizations with Kodaira non-vanishing, 
which are not of Mumford-Szpiro type (Theorem~\ref{prop:outkvce}).

The author thanks  Kei-ichi Watanabe, Masataka Tomari and 
Yushifumi Takeda for stimulating discussions.

\section{Fibered surfaces on pre-Tango curves}

In this section, we present the construction of our polarized surface,
which is a cyclic cover of a ruled surface over a smooth projective
curve. This is an extension of the Raynaud's counter-example
\cite{Ray} allowing more variations of the degree of the cyclic cover 
and a weaker condition for the base curve.
See \cite{Mu, Take3, Take4, Take07,TakeYoko, Xie} 
for similar constructions and detailed description.
In the following, let $k$ be an algebraically closed 
field of characteristic $\chara(k)=p>0$.

\subsection{pre-Tango and Tango structure}

Let $C$ be a smooth projective curve over $k$ with genus $g\geq 2$. 
We denote by $K(C)$ the function field of $C$ and we define 
$K(C)^p = \{ f^p\mid f\in K(C)\}$. Then the Tango-invariant
$n(C)$ is defined by 
\begin{equation*}
  n(C) := \max \left\{
           \deg\left[
		 \frac{(df)}{p}
                \right]
		 \mid 
               f\in K(C)\backslash K(C)^p
               \right\},
\end{equation*}
where $[\;\cdot\;]$ denotes round down of coefficients,
see \cite{Tan}.
We know that 
$0\leq n(C)\leq \displaystyle{\frac{2(g-1)}{p}}$
%
and  $C$ is called a {\em pre-Tango curve}
(or a {\em Tango curve}) if $n(C)>0$ (or 
$n(C) = \displaystyle{\frac{2(g-1)}{p}}$).
This means the existence of an ample divisor $D$ on $C$ 
such that $(df) \geq pD (>0)$ (or $(df) = pD (>0)$) with some 
$f\in K(C)\backslash K(C)^p$. We call 
the invertible sheaf ${\cal L}:= \OO_C(D)$ a {\em pre-Tango structure}
(or a {\em Tango structure}) of $C$.

Pre-Tango structure can be described in other way around. 
Consider the relative Frobenius morphism 
$F : C' \To C$ 
and let ${\cal B}^1$ be the image 
of the push forward $F_*d: F_*\OO_{C'}\To F_*\Omega^1_{C'}$
of the K\"{a}hler differential 
$d: \OO_{C'}\To \Omega_{C'}^1$.
Then we have the following short exact sequence
\begin{equation}
\label{B1exactseq}
0\To \OO_C \To F_*\OO_{C'} \To
 {\cal B}^1 \To 0.
\end{equation}
Now any ample invertible subsheaf ${\cal L}\subset {\cal B}^1$ 
is a pre-Tango structure of $C$ 
and the existence of such subsheaves is assured 
if $g\geq p$ (see Cor.~1.5 \cite{TakeYoko}), namely, 
curves with large genus are pre-Tango.

In the rest of this section, we 
consider a pre-Tango structure  ${\cal L} = \OO_C(D)$ of 
a pre-Tango curve $C$.

\subsection{dividing (pre-)Tango structure
\label{section:division-of-L}}

Consider the Jacobi variety ${\cal J}$ which consists of 
all the divisors of degree $0$ on $C$.
It is well known that if $(e, p)=1$, $e\in\NN$, 
the map $\varphi_e: {\cal J}\To {\cal J}$ s.t. $\varphi_e(D_0) = eD_0$
is surjective
(cf. page~42 \cite{M2}), i.e. every $D_0\in {\cal J}$
can be divided by $e$. 
Thus we know that,
for every $\NN \ni e\geq 2$ such that $(e,p)=1$ and $e\vert \deg {\cal
L}$,
there exists an ample invertible sheaf ${\cal N}$ with  ${\cal L } = {\cal N}^e$.

\subsection{Construction of the ruled surface $P$ and the divisor $E+C''$}

Tensoring (\ref{B1exactseq}) by ${\cal L}^{-1}$ to take  the global
sections, we have 
\begin{equation*}
0 \To H^0(C, {\cal B}^1\tensor{\cal L}^{-1})
  \overset{\eta}{\To}  H^1(C, {\cal L}^{-1})
  \overset{F^*}{\To} H^1(C, {\cal L}^{-p}).
\end{equation*}
On the other hand, we have the short exact sequence
\begin{equation}
\label{fundamentalExSeq2}
0\To {\cal B}^1 \To F_*\Omega_{C'}^1\overset{c}{\To}\Omega_{C}^1\To 0
\end{equation}
where $c$ is the Cartier operator \cite{Car}.
By tensoring (\ref{fundamentalExSeq2})
by ${\cal L}^{-1}$ to take the global sections,  we have 
\begin{equation*}
0 \To  H^0(C, {\cal B}^1\tensor {\cal L}^{-1})
  \To H^0(C, F_*(\Omega^1_{C'}(-p D)))
  \overset{c(-D)}{\To}   H^0(C, \Omega_{C}^1\tensor {\cal L}^{-1}).
\end{equation*}
Then we know $\Ker F^* \iso H^0(C, {\cal B}^1\tensor {\cal L}^{-1})
=\Ker{c(-D)}
\iso \{df \;\vert\; f\in K(C), (df)\geq pD\}$, which is non-trivial 
since $C$ is pre-Tango (cf. Lemma~12 \cite{Tan}).

Now take any $0\ne df_0\in H^0(C, {\cal B}^1\tensor {\cal L}^{-1})$.
Then  $\xi:= \eta(df_0)$ is a non-trivial
element in $H^1(C, {\cal L}^{-1})\iso \Ext^1_{\OO_C}({\cal
L}, \OO_C)$, so that we have a non-splitting extension
\begin{equation}
\label{fundamentalExSeq3}
 0 \To \OO_C \To {\cal E} \To {\cal L} \To 0
\end{equation}
where ${\cal E}$ is a locally free sheaf of rank $2$.

Moreover, we have $0=F^*\xi \in H^1(C, {\cal L}^{-p}) \iso
Ext^1_{\OO_C}({\cal L}^p, \OO_C)$ and the 
corresponding split extension
\begin{equation*}
 0 \To \OO_{C} \To F^*{\cal E} \To {\cal L}^{p}\To 0
\end{equation*}
is just the Frobenius pullback  of 
the sequence (\ref{fundamentalExSeq3}).
Using the splitting maps and tensoring by 
${\cal L}^{-1}$ we obtain another exact sequence
\begin{equation}
\label{fundamentalExSeq4}
 0 \To \OO_{C} \To F^*{\cal E}\tensor{\cal L}^{-p} 
   \To {\cal L}^{-p} \To 0.
\end{equation}

Now from the sequences (\ref{fundamentalExSeq3})
and (\ref{fundamentalExSeq4}) we obtain two ruled surfaces and 
their canonical cross sections $\sigma$ and $\tau$. Namely,
\begin{equation*}
   \pi : P = \PP({\cal E}) \To C,\quad E := \sigma(C)\subset P
\end{equation*}
where
$E$ is determined, as a Cartier divisor,
by the global section $s$ that is the 
image of $1$ by the inclusion 
$H^0(C, \OO_C) \hookrightarrow H^0(C, {\cal E}) = H^0(P, \OO_P(1))$
induced from (\ref{fundamentalExSeq3})
and 
\begin{equation*}
   \pi' : P' = \PP(F^*{\cal E}\tensor{\cal L}^{-p})
          \iso \PP(F^*{\cal E}) \To C\quad 
   \tilde{C}^{''} := \tau(C)
\end{equation*}
where 
$\tilde{C}^{''}$ is determined, as a Cartier divisor,
by the global section $t~{''}$ that is the 
image of $1$
by the inclusion $H^0(C, \OO_{C}) \hookrightarrow
H^0(C', F^*{\cal E}\tensor{\cal L}^{-p}) \iso H^0(P', \OO_{P'}(1))$
induced from (\ref{fundamentalExSeq4}).
Now we define the morphism $\varphi:P\To P'$ over $C$
by taking the $p$th power of the coordinates of 
$\pi^{-1}(x) \iso\PP^1_k ( \subset P)$ to obtain the coordinates of 
$(\pi')^{-1}(x) \iso\PP^1_k (\subset P')$ for every $x\in C$.
Then we set $C^{''} = \varphi^{-1}(\tilde{C}^{''})$.
By construction, 
we have $\OO_P(C^{''})\iso \OO_P(p)\tensor \pi^*{\cal L}^{-p}$ and
$C^{''}$ is a degree $p$ curve in $P$.
We know that $E\cap C^{''}=\emptyset$.
$E$ is smooth since $E \iso C$ via $\sigma$. 

\subsection{Purely inseparable cover  $\pi\mid_{C''}: C''\to C$}

Now as a Cartier divisor we write $D =\{(U_i, g_i)\}_i$, where 
$C = \Union_iU_i$ is an open covering, 
$g_i\in K(C)$ is 
the local equation of $D$.
By taking a finer covering, we can assume that ${\cal E}\mid_{U_i}$ 
are the free $\OO_{U_i}$-modules.
Then we can describe  $(df) \geq pD$ 
by $f = \{(U_i, g_i^pc_i)\}_i\in K(C)$ 
with $K(C)^p\not\ni c_i\in \OO_{U_i}$, i.e. $(df)\mid_{U_i} = (g_i^pdc_i)$. 
Then, 
\begin{Proposition}
\label{defideals}
We have 
\begin{equation*}
 C''\mid_{U_i} = \Proj \OO_{U_i}[x,y]/(c_ix^p+y^p).
\end{equation*}
In particular, $C''$ is an purely inseparable covering of $C$.
\end{Proposition}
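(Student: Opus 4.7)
My plan is to make the construction of $\tilde{C}''$ and $\varphi$ local and compute the defining equation of $C''|_{U_i}$ directly. Choose a basis $e_1,e_2$ of $\cal E|_{U_i}$ so that $e_1$ is the restriction of the global section $s$ (generating the sub $\OO_C$ in (\ref{fundamentalExSeq3})) and $e_2$ projects to the local generator $g_i^{-1}$ of $\cal L|_{U_i}$; this fixes fiber coordinates $[X:Y]$ on $\pi^{-1}(U_i)\cong\PP^1_{U_i}$. Similarly, $F^*\cal E|_{U_i}$ is free with basis given by the Frobenius pullbacks of $e_1,e_2$, and $\cal F|_{U_i}:=F^*\cal E|_{U_i}\otimes\cal L^{-p}|_{U_i}$ has basis obtained by tensoring these with the local generator $g_i^p$ of $\cal L^{-p}|_{U_i}$. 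With this setup the proposition reduces to finding the local form of the global section $t''$ defining $\tilde{C}''$.

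The heart of the matter is to exhibit the splitting of $0\to\OO_C\to F^*\cal E\to\cal L^p\to 0$ in these local trivializations. The class $\xi=\eta(df_0)\in H^1(\cal L^{-1})$ is represented, via the \v{C}ech boundary in (\ref{B1exactseq}) tensored by $\cal L^{-1}$, by a cocycle built from local lifts of $df_0$; the characteristic-$p$ identity $d(g_i^p c_i)=g_i^p\,dc_i$ together with the decomposition $f|_{U_i}=g_i^p c_i$ then shows that the Frobenius pullback of this cocycle is trivialized by the data $\{c_i\}$. Translating this trivialization into a splitting, the complementary subsheaf $\cal L^p\hookrightarrow F^*\cal E$ is locally generated by $Y^p+c_iX^p$ (lifting $g_i^{-p}$), the correction $c_iX^p\in\OO_C$ being precisely what compensates the cocycle discrepancy so that the local generators glue to a global subsheaf.

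After tensoring by $\cal L^{-p}$, the sub $\OO_C\hookrightarrow\cal F$ of (\ref{fundamentalExSeq4}) is locally generated by $(Y^p+c_iX^p)\otimes g_i^p$, which is the local form of $t''$. In fiber coordinates $[X':Y']$ on $\pi'^{-1}(U_i)$ dual to the basis $X^p\otimes g_i^p$, $Y^p\otimes g_i^p$ of $\cal F|_{U_i}$, this gives $\tilde{C}''|_{U_i}=\{c_iX'+Y'=0\}$. Since $\varphi$ is the relative Frobenius (composed with the canonical $\PP(F^*\cal E)\cong\PP(\cal F)$), locally sending $[X:Y]\mapsto[X^p:Y^p]$, pulling back yields $C''|_{U_i}=\Proj\OO_{U_i}[x,y]/(c_ix^p+y^p)$. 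Pure inseparability is then immediate: at any closed point of $U_i$ the polynomial $c_i(x)x^p+y^p$ factors over $k$ as a $p$-th power of a linear form, so every fiber of $C''\to C$ is a single non-reduced point, while $C''$ is integral because $c_i\notin K(C)^p$ prevents the defining equation from being a global $p$-th power.

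The step I expect to be the main obstacle is the splitting calculation: verifying that the single datum $\{c_i\}$ simultaneously trivializes $F^*\xi$ at the \v{C}ech level and supplies the local generators $Y^p+c_iX^p$ of the complementary sub of $F^*\cal E$. This demands careful \v{C}ech-level bookkeeping that ties together the boundary map $\eta$, the Cartier-operator sequence (\ref{fundamentalExSeq2}), and the characteristic-$p$ identity $d(g^pc)=g^p\,dc$; once this identification is established, the remainder reduces to routine computation with the relative Frobenius $\varphi$.
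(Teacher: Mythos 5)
Your proposal is correct and follows essentially the same route as the paper: localize both extension sequences over $U_i$, use the decomposition $f=\{g_i^p c_i\}$ to exhibit the splitting of the Frobenius-pulled-back sequence with the sub locally generated by $c_i x'+y'$, and then pull back along $\varphi\colon x'\mapsto x^p$, $y'\mapsto y^p$. The splitting step you flag as the main obstacle is precisely what the paper carries out by writing down the local isomorphism $(\OO_{U_i}\tensor \OO_{U_i}g_i^{-p})\tensor\OO_{U_i}g_i^{p}\iso\OO_{U_i}(c_ig_i^p,1)\dirsum\OO_{U_i}(g_i^p,0)$ with $i(a)=a(c_ig_i^p,1)$, and your added justification of pure inseparability (fiberwise $p$-th power factorization together with $c_i\notin K(C)^p$) supplies detail the paper omits.
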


\begin{proof}
The sequence (\ref{fundamentalExSeq3}) is locally  as follows:
\begin{equation*}
 0 \To \OO_{U_i} 
   \To \OO_{U_i}\dirsum \OO_{U_i}g_i^{-1}  
   \To \OO_{U_i}g_i^{-1} \To 0
\end{equation*}
so that we have 
$P\mid_{U_i} = \Proj (S(\OO_{U_i}\dirsum \OO_{U_i}g_i^{-1}))
=\Proj \OO_{U_i}[x,y]$,
where the indeterminates $x$ and $y$ represent the free basis
$1$ and $g_i^{-1}$.
On the other hand, 
we know that the sequence 
(\ref{fundamentalExSeq4}) is locally as follows:
\begin{equation*}
 0 \To \OO_{U_i} 
   \overset{i}{\To} (\OO_{U_i}\tensor \OO_{U_i} g_i^{-p})
                     \tensor\OO_{U_i}g_i^p
   \overset{j}{\To} \OO_{U_i} g_i^p
    \To 0
\end{equation*}
where we view 
\begin{eqnarray*}
(\OO_{U_i}\tensor \OO_{U_i}\cdot g_i^{-p})\tensor\OO_{U_i}\cdot g_i^p
&\iso& \OO_{U_i}\cdot g_i^p \dirsum \OO_{U_i}\cdot 1  \\
&\iso& \OO_{U_i}(c_ig_i^p,1) \dirsum \OO_{U_i}(g_i^p, 0)
\end{eqnarray*}
and we define  $i(a) = a(c_ig_i^p,1)$  and 
$j(a(c_ig_i^p,1) + b (g_i^p, 0)) = b g_i^p$
for $a, b\in\OO_{U_i}$.
Thus 
$P'\mid_{U_i}
=\Proj(S((\OO_{U_i}g_i^p\dirsum \OO_{U_i})))
\iso \Proj \OO_i[x',y']$ where the indeterminates $x'$ and $y'$
represents the free basis $g_i^p$ and $1$. 
Also $\tilde{C}''=\tau(C)$
is locally the zero locus of $t''\mid_{U_i} = c_ig_i^p+1$, so that 
we have 
\begin{equation*}
    \tilde{C}''\mid_{U_i} = \Proj \OO_{U_i}[x', y']/(c_ix' + y')
\end{equation*}
Since 
$\varphi: P=\Proj \OO_{U_i}[x,y]\To P' = \Proj \OO_{U_i}[x', y']$ is induced by 
the Frobenius 
$\OO_{U_i}[x',y']\ni x', y' \mapsto x^p, y^p \in \OO_{U_i}[x,y]$, we have 
\begin{equation*}
    C''\mid_{U_i} = \Proj \OO_{U_i}[x, y]/(c_ix^p + y^p).
\end{equation*}
\end{proof}

\begin{Remark}
\label{remark:idealsE}
{\em By a similar discussion to the proof of Proposition~\ref{defideals}
we can show 
\begin{equation*}
   E\mid_{U_i} = \Proj (\OO_{U_i}[x,y]/(x)) \iso \Spec \OO_{U_i}[y].
\end{equation*}
}
\end{Remark}

Later we will construct cyclic covers of $P$ ramified at $E + C''$ 
and the smoothness of the cyclic covers depends on the smoothness of 
$E$ and $C''$. Since $E \iso C$ is smooth by definition, we have to
see if $C''$ is smooth. To this end, we must prepare the following 
lemma.

\begin{Lemma}
\label{lemma:forSmoothnessofC''}
$\Omega_{C''/C}\iso \pi^*\OO_{C}(D)$.
\end{Lemma}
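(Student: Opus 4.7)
The plan is to use the conormal sequence of $C''$ in $P$ together with the Euler sequence of the projective bundle $P = \PP({\cal E})$, exploiting the fact that $C''$ lies entirely in the complement of $E$.

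First, I would start from the conormal exact sequence for $C'' \hookrightarrow P$ over $C$:
\begin{equation*}
\OO_P(-C'')|_{C''} \overset{\alpha}{\To} \Omega_{P/C}|_{C''} \To \Omega_{C''/C} \To 0.
\end{equation*}
In the affine chart $V_x = \{x \ne 0\} \subset P$ restricted to $\pi^{-1}(U_i)$, the local equation of $C''$ is $u_i^p + c_i$ (with $u_i = y_i/x_i$) by Proposition~\ref{defideals}, and $\alpha$ sends this to its image under the relative differential
\begin{equation*}
d(u_i^p + c_i) = p\,u_i^{p-1}\,du_i + dc_i.
\end{equation*}
Both terms vanish in $\Omega_{P/C}$: the first because $p = 0$ in $\chara(k) = p$, and the second because $c_i \in \OO_C$ is pulled back from the base, hence has trivial relative differential. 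Since $V_x$ together with $V_y$ covers $P$ and $C'' \subset V_x$ (see below), $\alpha = 0$ globally, giving $\Omega_{C''/C} \iso \Omega_{P/C}|_{C''}$.

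Next, I would apply the relative Euler sequence
\begin{equation*}
0 \To \Omega_{P/C} \To \pi^*{\cal E}\tensor\OO_P(-1) \To \OO_P \To 0,
\end{equation*}
and take determinants. Since $\rank{\cal E} = 2$ and $\det{\cal E} = {\cal L}$ from the extension (\ref{fundamentalExSeq3}), this yields
\begin{equation*}
\Omega_{P/C} \iso \pi^*(\det{\cal E}) \tensor \OO_P(-2) \iso \pi^*{\cal L}\tensor\OO_P(-2).
\end{equation*}

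The final ingredient is the observation that $E \cap C'' = \emptyset$, already noted in the excerpt. Since $E = \{x = 0\}$ (Remark~\ref{remark:idealsE}), the global section $x \in H^0(P, \OO_P(1))$ is nowhere zero on $C''$, so $x|_{C''}$ trivializes $\OO_P(1)|_{C''}$ and consequently $\OO_P(-2)|_{C''} \iso \OO_{C''}$. Combining the three pieces,
\begin{equation*}
\Omega_{C''/C} \iso \Omega_{P/C}|_{C''} \iso \pi^*{\cal L}|_{C''} \tensor \OO_P(-2)|_{C''} \iso \pi^*\OO_C(D),
\end{equation*}
proving the lemma. The main subtlety is the vanishing of the conormal map $\alpha$, which is the characteristic-$p$ phenomenon underlying the whole construction; once that is recognized, the remaining steps are standard projective-bundle calculations.
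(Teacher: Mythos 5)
Your proof is correct, but it takes a genuinely different route from the paper's. The paper argues by direct local computation: starting from the explicit charts $\OO_{C''}\mid_{U_i}=\OO_{U_i}[x_i,y_i]/(c_ix_i^p+y_i^p)$ of Proposition~\ref{defideals}, it derives the transition relations $g_i^{-1}x_i=g_j^{-1}x_j$ and $y_i=y_j$ on overlaps, exhibits explicit local generators $g_i^{-1}dX_i$ (resp.\ $x_i^{-2}dx_i$) of $\Omega_{C''/C}$ on the two affine pieces $\{y_i\ne 0\}$ and $\{x_i\ne 0\}$, and checks that these glue with the cocycle of $\pi^*\OO_C(D)$. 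You instead run the relative conormal sequence of $C''\subset P$ over $C$, observe that the conormal map vanishes because $d(c_i+u_i^p)=pu_i^{p-1}du_i+dc_i=0$ in $\Omega_{P/C}$ (the characteristic-$p$ point, legitimate on all of $C''$ since $E\cap C''=\emptyset$ forces $C''\subset\{x\ne 0\}$), identify $\Omega_{P/C}\iso\pi^*{\cal L}\tensor\OO_P(-2)$ from the Euler sequence and $\det{\cal E}={\cal L}$, and then trivialize $\OO_P(1)\mid_{C''}$ by the nowhere-vanishing section $s$ cutting out $E$. Both arguments are sound and each step of yours checks out against the paper's setup (the formula $\omega_{P/C}=\OO_P(-2)\tensor\pi^*{\cal L}$ is even used later in the proof of Theorem~\ref{h1neg}). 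Your version is more conceptual and explains structurally why the answer is $\omega_{P/C}\mid_{C''}$ with the $\OO_P(-2)$ factor killed by disjointness from $E$; the paper's version is more elementary and hands you the explicit generating sections and cocycles, which is in the same computational spirit as the subsequent smoothness argument for Theorem~\ref{theorem:smoothnessofC''}.
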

\begin{proof}By Proposition~\ref{defideals},
we have $\OO_{C''}\mid_{U_i} = \OO_{U_i}[x_i,y_i]/(c_ix_i^p + y_i^p)$
and $(df)\geq pD$ with $f = \{(U_i,g_i^pc_i)\}_i\in K(C)$.
Thus on  $U_i\cap U_j$ we have 
$g_i^pc_i = g_j^pc_j$ so that 
\begin{equation*}
(g_j^pc_j)(g_i^{-1}x_i)^p + y_i^p
=c_ix_i^p + y_i^p 
= c_jx_j^p + y_j^p 
= (g_j^pc_j)(g_j^{-1}x_j)^p + y_j^p
\end{equation*}
Thus we have $g_i^{-1}x_i=g_j^{-1}x_j$ and $y_i=y_j$, and then
\begin{equation*}
g_i^{-1}dx_i = d(g_i^{-1}x_i) =  d(g_j^{-1}x_j) = 
g_j^{-1}dx_j
\end{equation*}
for $d := d_{C''/C}$.
Now on $\tilde{U}_i= U_i\cap \{y_i\ne{0}\}$,
we have $\OO_{C''}\mid_{\tilde{U}_i}
= \OO_{\tilde{U}_i}[X_i]/(c_iX_i^p+1)$
with $X_i := x_i/y_i$ and then
\begin{equation*}
\Omega_{C''/C}\mid_{\tilde{U}_i}
= \OO_{C''}\mid_{\tilde{U}_i} \cdot g_i^{-1}dX_i
=\pi^*\OO_C(D)\mid_{\tilde{U}_i}
\end{equation*}
Notice that we have $g_i^{-1}dX_i = g_j^{-1}dX_j$
on $\tilde{U}_i\cap \tilde{U}_j$.

On the other hand, on $\hat{U}_i= U_i \cap \{x_i\ne{0}\}$ 
we can write 
\begin{equation*}
\OO_{C''}\mid_{\hat{U}_i} = \OO_C[Y_i]/(c_i + Y_i^p)
\qquad\mbox{with } Y_i = x_i^{-1}
\end{equation*}
and then
\begin{equation*}
\Omega_{C''/C}\mid_{\hat{U}_i} 
= \OO_{C''}\mid_{\hat{U}_i}\cdot dY_i
= \OO_{C''}\mid_{\hat{U}_i}\cdot x_i^{-2} dx_i.
\end{equation*}
By setting $t = g_i^{-1}x_i = g_j^{-1}x_j$ on $U_i\cap U_j$,
we have 
\begin{equation*}
x_i^{-2} dx_i = x_i^{-2} g_i dt = g_i^{-1}t^{-2}dt
\end{equation*}
and 
so that we have 
$\Omega_{C''/C}\mid_{\hat{U}_i}\iso \pi^*\OO_C(D)\mid_{\hat{U}_i}$.
Consequently, we have $\Omega_{C''/C} \iso \pi^*\OO_C(D)$,  
locally free of rank~1, as required.
\end{proof}

The following result first appeared in
Mukai's paper in Japanese (\cite{Mu} Prop.~5)
with a brief outline of the proof and his result 
is for varieties of arbitrary dimensions. 
We give here a detailed proof in the case of curves
for the readers convenience.

\begin{Theorem}
\label{theorem:smoothnessofC''}
Let $C$ be a pre-Tango curve. Then 
$C''$ is smooth if and only if $C$ is Tango.
\end{Theorem}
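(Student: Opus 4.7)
The plan is to test smoothness of $C''$ pointwise using the Jacobian criterion on the explicit equations produced by Proposition~\ref{defideals}, and then to match the resulting condition with the equality $(df)=pD$ that defines the Tango property. By Proposition~\ref{defideals}, $C''|_{U_i}=\Proj \OO_{U_i}[x,y]/(c_ix^p+y^p)$ is covered by the two standard affine charts $\OO_{U_i}[X_i]/(c_iX_i^p+1)$ with $X_i=x/y$ and $\OO_{U_i}[Y_i]/(c_i+Y_i^p)$ with $Y_i=y/x$, and each closed point of $C''$ lies in at least one of them.

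After shrinking $U_i$ so that a local parameter $u$ of $C$ is available, the Jacobian of $c_i(u)X_i^p+1$ with respect to $(u,X_i)$ equals $(c_i'(u)X_i^p,\, pc_i(u)X_i^{p-1})=(c_i'(u)X_i^p,\,0)$ because $\chara(k)=p$. At a closed point lying over $u_0\in U_i$ the equation $c_i(u_0)X_0^p=-1$ forces $c_i(u_0)\neq 0$ and $X_0\neq 0$, so the Jacobian is nonzero iff $c_i'(u_0)\neq 0$. In the second chart, at a point over $u_0$ with $c_i(u_0)=0$ (where the first chart has no point), the Jacobian of $c_i+Y_i^p$ at $(u_0,0)$ equals $(c_i'(u_0),\,0)$ and is once again nonzero iff $c_i'(u_0)\neq 0$. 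Hence $C''$ is smooth over $U_i$ iff $dc_i=c_i'(u)\,du$ has no zero on $U_i$.

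To convert this local condition into a statement about the global divisor $(df)$, recall that $f=\{(U_i,g_i^p c_i)\}_i$ and $d(g_i^p)=0$ in characteristic $p$, so on $U_i$ one has $df=g_i^p\,dc_i$ and therefore at any $P\in U_i$
\begin{equation*}
\mathrm{ord}_P(df)=p\,\mathrm{ord}_P(g_i)+\mathrm{ord}_P(dc_i)=pD(P)+\mathrm{ord}_P(dc_i).
\end{equation*}
The pre-Tango hypothesis $(df)\geq pD$ already forces $\mathrm{ord}_P(dc_i)\geq 0$, and the equality $(df)=pD$ holds iff $\mathrm{ord}_P(dc_i)=0$ for every $P$ in every $U_i$, i.e., iff each $dc_i$ is nowhere vanishing. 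Combined with the previous paragraph this yields the equivalence asserted in the theorem.

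The step that requires care is the Jacobian computation: the partial derivative in the fibre variable vanishes identically in characteristic $p$, so the entire smoothness obstruction must be extracted from the $u$-partial, and one has to work with both affine charts to cover the points lying above the zeros of $c_i$. Once that is settled, the bookkeeping in the divisor-theoretic translation at the end is essentially automatic.
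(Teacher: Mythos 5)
Your proof is correct, but it takes a genuinely different route from the paper's. The paper works sheaf-theoretically: it sets up the four-term sequence $0\To \pi^*\OO_C(pD)\namedTo{df}\pi^*\Omega_C\To\Omega_{C''}\To\Omega_{C''/C}\To 0$, proves separately that $\Omega_{C''/C}\iso\pi^*\OO_C(D)$ is invertible (Lemma~\ref{lemma:forSmoothnessofC''}), and then reads off smoothness from whether $\Coker(df)$ vanishes, i.e.\ whether $(dc_i)=0$; this is the argument that Mukai runs in arbitrary dimension, and as a by-product it identifies $\Omega_{C''}\iso\pi^*\OO_C(D)$ in the Tango case. You instead apply the Jacobian criterion pointwise to the equations $c_ix^p+y^p$ from Proposition~\ref{defideals}, which is more elementary (no need for Lemma~\ref{lemma:forSmoothnessofC''} or the cotangent sequence) and more explicit: it locates the singular points of $C''$ exactly over the zeros of the divisor $(df)-pD$. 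Your case analysis of the two charts is complete (over a point with $c_i(u_0)\ne 0$ every point of the fibre has $y\ne 0$, and over a zero of $c_i$ the unique point is $Y_i=0$ in the other chart), and the observation that the fibre-direction partial vanishes identically in characteristic $p$ is precisely the right reduction. Two small points worth making explicit if you write this up: you should note that $c_i\notin K(C)^p$ guarantees the defining equation is not a $p$-th power, so $C''$ is reduced and the Jacobian criterion applies to the curve you think it does; and, as in the paper, ``$C$ is Tango'' is being used as shorthand for ``the chosen $D$ and $f$ satisfy $(df)=pD$'' --- what both proofs actually establish is that $C''$ is smooth iff the chosen pre-Tango structure is a Tango structure.
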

\begin{proof}
In the following 
we will denote the restriction of  $\pi:P\To C$ to $C''\subset P$
also by $\pi$. 
Now we consider the sequence
\begin{equation}
\label{basicsequence}
0\To \pi^*\OO_C(pD) 
 \overset{df}{\To}  \pi^*\Omega_C
 \overset{\psi}{\To} \Omega_{C''}
 \overset{\rho}{\To} \Omega_{C''/C} 
 \To 0
\end{equation}
where $df$ is  the multiplication by 
$df = \{(g_i^pdc_i)\}_i$.
The exactness of 
$\pi^*\Omega_C\To \Omega_{C''}\To\Omega_{C''/C}\To 0$
is well known.
The multiplication by $df$ 
is injective 
since $dc_i\ne{0}$ and $C$ is smooth. 
Moreover we have $\Ker\psi \supset \Im{df}$.  
To see this we have only to show that $\psi(dc_i)$,
which is by definition the K\"ahler differential of 
the image of $c_i$ by $\pi^\sharp :\OO_C\To \OO_{C''}$,
is trivial. But this is immediate since,
by Proposition~\ref{defideals},
$\pi^\sharp :\OO_C\To \OO_{C''}$
is locally the canonical inclusion
$\OO_C \hookrightarrow \OO_C[x,y]/(c_ix^p + y^p)$.
Thus (\ref{basicsequence}) is exact if and only if 
$\Ker\psi \subset \Im{df}$.

Now $\Omega_C$ is locally free of rank $1$ since $C$ is 
smooth. Then we know by NAK that 
$C$ being Tango, i.e. $(dc_i) = 0$
is equivalent with $\Coker(df)=0$.
This implies that (\ref{basicsequence}) is exact,
and 
then we have $\Omega_{C''} \iso  \Omega_{C''/C}$, 
which is locally free of rank~$1$ by Lemma~\ref{lemma:forSmoothnessofC''}, 
and $C''$ is smooth.
Conversely, assume that $C''$ is smooth. 
Then since $\Omega_{C''/C}$ and $\Omega_{C''}$ are 
locally free module of rank~$1$, we must have 
$\Ker\rho=\Im\psi =0$ so that we have $\Coker(df)=0$,
i.e., $C$ is Tango.
\end{proof}

\subsection{Construction of cyclic cover of $P$ ramified at $E+C''$
\label{section:cyclicCover}}
In this section, we will construct a cyclic cover $X$ of $P$ of suitable
degree ramified at $E+C''$.
We choose $\ell\geq 2$ such that $\ell\mid p+1$ and $\ell\mid e$, 
and set 
\begin{equation*}
{\cal M}:= \OO_P\left(
		 -\frac{p+1}{\ell}
		\right)\tensor \pi^*{\cal N}^{\frac{pe}{\ell}}.
\end{equation*}
Then we have ${\cal M}^{-\ell} = \OO_P(E+C'')$.
Now we define an $\OO_P$-algebra structure in
$\Dirsum_{i=0}^{\ell-1}{\cal M}^i$  with the multiplication
defined by 
\begin{equation*}
\begin{array}{ccc}
{\cal M}^i \times {\cal M}^j &\To&  {\cal M}^{i+j} \\
   (a, b)              & \mapsto &  a\tensor b\\
\end{array}
\end{equation*}
if $i+j\leq \ell-1$ and 
\begin{equation*}
\begin{array}{ccc}
{\cal M}^i \times {\cal M}^j &\To& {\cal M}^{i+j-\ell} \\
   (a, b) & \mapsto& a\tensor  b\tensor \zeta\\
\end{array}
\end{equation*}
if $i+j> \ell$, with $\zeta=s\tensor{t''}$ 
where $s$ and $t''$ are the global sections 
defining $E$ and $C''$.
Then we obtain 
\begin{equation*}
\psi : X := {\cal Spec} \left(\Dirsum_{i=0}^{\ell-1}{\cal M}^i\right)
\To P,
\end{equation*}
which is the cyclic cover of the ruled surface $P$
ramified at $E+C''$  of degree $\ell$,
where ${\cal Spec}$ denotes the affine morphism.
Now we will define $\phi = \pi\circ\psi : X\to C$.

\begin{Remark}
{\em 
$\phi: X\to C$
is an extension of Raynaud's original counter-example to
Kodaira vanishing. Namely, let $C$ be a Tango curve
and let $e=\ell=3$ if $p=2$ and  $e=\ell=2$ if $p\geq 3$,
then we obtain the example as given in \cite{Ray}.
}
\end{Remark}

We define $\tilde{E} = \psi^{-1}(E)$ and $\tilde{C}'' = \psi^{-1}(C'')$,
then  we have 
\begin{equation}
\label{liftByPsi}
 \ell\tilde{E} = \psi^*E\quad\mbox{and} \quad
 \ell\tilde{C}'' = \psi^*C''
\end{equation}
and 
\begin{equation}
\label{psiPushForward}
  \psi_*\OO_X = \Dirsum_{i=0}^{\ell-1}{\cal M}^i.
\end{equation}
Moreover, we have the following, which will be used 
later.

\begin{Lemma} \label{tool2}
For $k\geq 1$, we have 
$\psi_*\OO_X(-k\tilde{E}) \iso \OO_P(-kE)\dirsum
\displaystyle{\Dirsum_{i=1}^{\ell-1}}{\cal M}^i$.
\end{Lemma}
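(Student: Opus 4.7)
My plan is to identify $\psi_*\OO_X(-k\tilde E)$ summand by summand in the decomposition $\psi_*\OO_X=\bigoplus_{i=0}^{\ell-1}{\cal M}^i$ coming from the $\mu_\ell$-action on the cyclic cover. Since $\psi$ is finite and hence affine, pushing forward is exact, so applying $\psi_*$ to
\begin{equation*}
0\To\OO_X(-k\tilde E)\To\OO_X\To\OO_{k\tilde E}\To 0
\end{equation*}
yields $\psi_*\OO_X(-k\tilde E)=\ker\bigl[\psi_*\OO_X\to\psi_*\OO_{k\tilde E}\bigr]$; the task thus reduces to computing the quotient and reading off its eigenspace decomposition.

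First I would set up the local picture using Proposition~\ref{defideals} and Remark~\ref{remark:idealsE}. On an affine open $U_i$ trivializing ${\cal M}$, let $\alpha$ denote the local generator of ${\cal M}^1$; then by the construction of Section~\ref{section:cyclicCover}, $\alpha^\ell=g_iu$ where $g_i$ locally cuts out $E$ and $u$ is the local expression of $t''$, which is a unit near $E$ since $E\cap C''=\emptyset$. Hence $\OO_X\vert_{\psi^{-1}(U_i)}\iso\OO_{U_i}[\alpha]/(\alpha^\ell-g_iu)$, the reduced preimage $\tilde E$ is locally cut out by $\alpha$ (consistent with $\psi^*E=\ell\tilde E$), and the summand ${\cal M}^i\subset\psi_*\OO_X$ corresponds locally to $\OO_{U_i}\cdot\alpha^i$.

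Next I would read off the map $\psi_*\OO_X\to\psi_*\OO_{k\tilde E}$ in this local form. When $k=1$, the quotient $\OO_{\tilde E}$ pushes forward through the isomorphism $\psi\vert_{\tilde E}\colon\tilde E\iso E$ to $\OO_E$, and the induced map annihilates the summands ${\cal M}^i$ for $i\ge 1$ (as $\alpha^i\equiv 0\pmod\alpha$) while it restricts to the canonical surjection ${\cal M}^0=\OO_P\to\OO_E$. Taking kernels summand by summand yields the asserted formula for $k=1$. For $k\ge 2$ I would proceed inductively using the analogous short exact sequence $0\to\OO_X(-k\tilde E)\to\OO_X(-(k-1)\tilde E)\to\OO_{\tilde E}\otimes\OO_X(-(k-1)\tilde E)\to 0$, or else decompose $k=q\ell+r$ with $0\le r<\ell$ and invoke the projection formula applied to $\psi^*\OO_P(E)=\OO_X(\ell\tilde E)$.

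The main obstacle will be the global patching: I must check that the local kernels assemble into the specified line bundles $\OO_P(-kE)$ and ${\cal M}^i$ on $P$, rather than into a priori different twists. This is handled by the $\mu_\ell$-equivariance of the whole construction together with the intrinsic identification $\OO_P(-E)=\ker[\OO_P\to\OO_E]$ as a coherent subsheaf of $\OO_P$.
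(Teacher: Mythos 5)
Your reduction to computing $\ker[\psi_*\OO_X\to\psi_*\OO_{k\tilde E}]$ and your $k=1$ argument are correct, and for $k=1$ your local computation is an explicit version of what the paper does (the paper pushes forward the same ideal-sheaf sequence, identifies $\psi_*\OO_{k\tilde E}$ with $\OO_P/\OO_P(-kE)$ via $\psi\colon\tilde E\iso E$, and concludes by the five lemma). The genuine problem is the passage to $k\geq 2$, and it is not merely a gap in exposition: neither of your two proposed completions can yield the stated formula, because the local computation produces a different answer for $k\geq 2$. Concretely, with $\OO_X\vert_{\psi^{-1}(U_i)}\iso\OO_{U_i}[\alpha]/(\alpha^\ell-g_iu)$ and $\OO_X(-k\tilde E)=\alpha^k\OO_X$ near $\tilde E$, writing $k=q\ell+r$ with $0\leq r<\ell$ one finds that the weight-$j$ eigensheaf of $\psi_*\OO_X(-k\tilde E)$ is ${\cal M}^j(-(q+1)E)$ for $j<r$ and ${\cal M}^j(-qE)$ for $j\geq r$: each step of your induction twists exactly one eigensheaf (the one of weight $k-1 \bmod \ell$) by $\OO_P(-E)$, cycling through the summands, rather than accumulating the whole twist on the $j=0$ summand. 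Your own projection-formula alternative already exposes this at $k=\ell$, where $\psi_*\OO_X(-\ell\tilde E)=\psi_*\psi^*\OO_P(-E)\iso\Dirsum_{i=0}^{\ell-1}{\cal M}^i(-E)$, which is not $\OO_P(-\ell E)\dirsum\Dirsum_{i=1}^{\ell-1}{\cal M}^i$.

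In fact you have put your finger on a weak point of the paper's own proof: the identification $\psi_*\OO_{k\tilde E}\iso\OO_{kE}$ used there follows from $\psi\colon\tilde E\iso E$ only when $k=1$. For $k\geq 2$ the scheme $k\tilde E$ is non-reduced along a purely inseparable direction for $\psi$, and locally (for $k\leq\ell$) $\psi_*\OO_{k\tilde E}$ is $(\OO_{U_i}/(g_i))[\alpha]/(\alpha^k)$, which is annihilated by $g_i$ and hence not isomorphic to $\OO_{U_i}/(g_i^k)=\OO_P/\OO_P(-kE)$. So you should either restrict the statement (and your proof) to $k=1$, or replace the right-hand side for general $k$ by the corrected eigensheaf formula above; as written, the induction cannot close on the formula in the statement.
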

\begin{proof}From the exact sequence
\begin{equation*}
0\To \OO_X(-k\tilde{E}) \To \OO_X \To \OO_{k\tilde{E}} \To 0,
\end{equation*}
we obtain by (\ref{psiPushForward}) 
\begin{equation*}
0\To \psi_*\OO_X(-k\tilde{E}) 
 \To \Dirsum_{i=0}^{\ell-1}{\cal M}^i
 \To \psi_*\OO_{k\tilde{E}}
 \To R^1\psi_*\OO_X(-k\tilde{E}),
\end{equation*}
where $R^1\psi_*\OO_X(-\tilde{E})=0$
since $\psi: X\to P$ is an affine morphism.
Also since $\psi: \tilde{E}\iso E$,
we have $\psi_*\OO_{k\tilde{E}}  \iso  \OO_{kE}=\OO_P/\OO_P(-kE)$.
Then we obtain the following diagram:
\begin{equation*}
\begin{array}{ccccccccc}
0 & \To & \psi_*\OO_X(-k\tilde{E}) & \To & \OO_P\dirsum \Dirsum_{i=1}^{\ell-1}{\cal
 M}^i
 & \To & \psi_*\OO_{k\tilde{E}} & \To & 0 \\
|| &  &     &  & ||   & & ||  & & || \\
0 & \To & \OO_P(-kE)\dirsum \Dirsum_{i=1}^{\ell-1}{\cal M}^i
  & \To & \OO_P\dirsum \Dirsum_{i=1}^{\ell-1}{\cal M}^i
  & \To & \OO_P/\OO_P(-kE) & \To & 0 \\
\end{array}
\end{equation*}
from which we have $\psi_*\OO_X(-k\tilde{E})\iso \OO_P(-kE)\dirsum 
\Dirsum_{i=1}^{\ell-1}{\cal M}^i$ by 5-lemma.
\end{proof}

\begin{Lemma}
\label{tool1}
For $k\geq 0$ and $1\leq r\leq \ell-1$, we have 
\begin{enumerate}
\item [$(i)$] $\psi_*\OO_X(k\ell\tilde{E})
                =  \displaystyle{\Dirsum_{i=0}^{\ell-1}}{\cal M}^i(kE)$, 
\item [$(ii)$] $\psi_*(\OO_X((k\ell + r)\tilde{E}))
                =  \OO_P(r+1+k-\ell) 
                   \dirsum 
                   \displaystyle{\Dirsum_{i=1}^{\ell-1}}{\cal M}^i((k+1)E)$.
\end{enumerate}
\end{Lemma}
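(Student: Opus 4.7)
The plan is to derive both parts from the projection formula, using the identity $\ell\tilde{E}=\psi^{*}E$ to move multiples of $\ell\tilde{E}$ across $\psi$, and invoking Lemma~\ref{tool2} to handle the residual ``fractional'' piece in~(ii). The backbone of the computation is the decomposition $\psi_{*}\OO_{X}=\Dirsum_{i=0}^{\ell-1}\cal M^{i}$ recalled in~(\ref{psiPushForward}), together with the fact that $\psi$ is an affine morphism (so higher direct images vanish and pushforward is compatible with tensoring by pullbacks from $P$).

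For~(i), the identity $k\ell\tilde{E}=\psi^{*}(kE)$ gives $\OO_{X}(k\ell\tilde{E})\iso\psi^{*}\OO_{P}(kE)$, so the projection formula applied to $\OO_{X}$ and $\OO_{P}(kE)$ yields
\[
\psi_{*}\OO_{X}(k\ell\tilde{E}) = \psi_{*}\OO_{X}\tensor\OO_{P}(kE) = \Dirsum_{i=0}^{\ell-1}\cal M^{i}(kE),
\]
which is exactly the claim.

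For~(ii), the key step is the divisor rewriting
\[
(k\ell+r)\tilde{E} = (k+1)\ell\tilde{E} - (\ell-r)\tilde{E} = \psi^{*}((k+1)E) - (\ell-r)\tilde{E},
\]
where the hypothesis $1\leq r\leq\ell-1$ guarantees $1\leq\ell-r\leq\ell-1$, so that Lemma~\ref{tool2} can be applied to $-(\ell-r)\tilde{E}$. This gives the isomorphism $\OO_{X}((k\ell+r)\tilde{E})\iso\OO_{X}(-(\ell-r)\tilde{E})\tensor\psi^{*}\OO_{P}((k+1)E)$, and the projection formula then provides
\[
\psi_{*}\OO_{X}((k\ell+r)\tilde{E}) = \psi_{*}\OO_{X}(-(\ell-r)\tilde{E})\tensor\OO_{P}((k+1)E).
\]
Substituting Lemma~\ref{tool2} and distributing $\OO_{P}((k+1)E)$ over the direct sum, the summand $\OO_{P}(-(\ell-r)E)$ transforms into $\OO_{P}((k+1-\ell+r)E)=\OO_{P}(r+1+k-\ell)$ (using $\OO_{P}(nE)=\OO_{P}(n)$ on the ruled surface $P=\PP(\cal E)$), while each $\cal M^{i}$ for $1\leq i\leq\ell-1$ becomes $\cal M^{i}((k+1)E)$; this is exactly the stated decomposition.

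The main obstacle, if any, is the arithmetic rewriting in~(ii): one must recognize that $k\ell+r$ is best viewed as $(k+1)\ell$ minus $\ell-r$, rather than $k\ell$ plus $r$, so that Lemma~\ref{tool2}, which handles \emph{negative} multiples of $\tilde{E}$, becomes directly applicable. The apparently peculiar twist $\OO_{P}(r+1+k-\ell)$ in the $i=0$ summand is precisely the byproduct of tensoring $\OO_{P}(-(\ell-r)E)$ with $\OO_{P}((k+1)E)$. Once the rewriting is in hand, no further input beyond~(\ref{psiPushForward}) and Lemma~\ref{tool2} is required.
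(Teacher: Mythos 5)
Your proposal is correct and follows essentially the same route as the paper: part (i) via $\OO_X(k\ell\tilde E)\iso\psi^*\OO_P(kE)$ and the projection formula, and part (ii) by rewriting the divisor as a full multiple of $\ell\tilde E$ minus $(\ell-r)\tilde E$ so that Lemma~\ref{tool2} applies. The only cosmetic difference is that the paper first treats $k=0$ (computing $\psi_*\OO_X(r\tilde E)$) and then twists by $\OO_P(kE)$, whereas you fold both twists into the single rewriting $(k+1)\ell\tilde E-(\ell-r)\tilde E$; the computations are identical.
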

\begin{proof}
Using  (\ref{liftByPsi}) and  (\ref{psiPushForward}), 
we have 
$\psi_*\OO_X(k\ell \tilde{E})
 =  \psi_*\psi^*\OO_P(k E)
 =  \psi_*\OO_X\tensor\OO_P(kE)
 =  \Dirsum_{i=0}^{\ell-1}{\cal M}^i(kE)$,
which proves $(i)$. 
Now
for $1\leq r\leq \ell-1$, 
we compute by  (\ref{liftByPsi}) and  Lemma~\ref{tool2}
\begin{eqnarray*}
\lefteqn{\psi_*\OO_X(r\tilde{E})}\\
&= & \psi_*(\OO_X( -(\ell-r)\tilde{E}
                \tensor
                \ell\tilde{E})
             )
=  \psi_*(  \OO_X(-(\ell-r)\tilde{E})
           \tensor
           \psi^*\OO_P(E)
        )\\
&= &\psi_*\OO_X(-(\ell-r)\tilde{E})\tensor \OO_P(E)
= \OO_P((r+1-\ell)E) \dirsum \Dirsum_{i=1}^{\ell-1}{\cal M}^i(E),
\end{eqnarray*}
from which we immediately obtain $(ii)$.
\end{proof}

Our surface $X$ is at least normal even if $E+C''$ is singular,
namely in the case that $C$ is pre-Tango but not Tango
(see Theorem~\ref{theorem:smoothnessofC''}).
To prove this fact we 
use a result by Esnault-Viehweg. 
Let $Y$ be a variety, ${\cal H}$ an invertible sheaf over $Y$
and $E = \sum_{j=1}^r\alpha_jE_j$ an effective divisor
such that ${\cal H}^\ell = \OO_X(E)$ for some integer $\ell\geq 2$.
We define
${\cal H}^{(i)}:= {\cal H}^i\tensor\OO_X(-[\frac{i}{\ell}E])$
and set 
${\cal A} := \Dirsum_{i=0}^{\ell-1}{{\cal H}^{(i)}}^{-1}$.
Then we have 
\begin{Proposition}[cf. Claim~3.12 of \cite{EV}]
\label{EVprop}
The canonical morphism ${\cal Spec(A)}\To Y$ is finite
and ${\cal Spec(A)}$ is normal.
\end{Proposition}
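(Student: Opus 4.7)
The plan is to first verify that $\mathcal{A}$ carries a natural $\mathcal{O}_Y$-algebra structure — from which finiteness is immediate — and then to identify $\mathcal{A}$ locally with the integral closure of the naive cyclic cover. For the algebra structure, rewrite $(\mathcal{H}^{(i)})^{-1} = \mathcal{H}^{-i}\tensor \mathcal{O}_Y(\lfloor iE/\ell\rfloor)$. The elementary inequality $\lfloor x\rfloor + \lfloor y\rfloor \leq \lfloor x+y\rfloor$, applied component by component to the effective divisor $E$, gives inclusions
\begin{equation*}
(\mathcal{H}^{(i)})^{-1}\tensor (\mathcal{H}^{(j)})^{-1} \hookrightarrow (\mathcal{H}^{(i+j)})^{-1}
\end{equation*}
for $i+j\leq \ell-1$, and for $i+j\geq \ell$ the identification $\mathcal{H}^{-\ell}\iso \mathcal{O}_Y(-E)$ supplies the ``wrap-around'' target $(\mathcal{H}^{(i+j-\ell)})^{-1}$. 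These maps endow $\mathcal{A}$ with an associative, commutative $\mathcal{O}_Y$-algebra structure; since $\mathcal{A}$ is a finite direct sum of coherent $\mathcal{O}_Y$-modules it is itself coherent, and coherence immediately forces $\mathcal{Spec}(\mathcal{A}) \to Y$ to be finite.

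For normality, I would work locally on an affine open $U = \Spec R \subset Y$ on which $\mathcal{H}$ is trivial and $E|_U$ is cut out by $f = u\prod_j t_j^{\alpha_j}$, with $u\in R^\times$ and $t_j$ a local equation of $E_j$. The naive cyclic cover $R[T]/(T^\ell - f)$ recovers $\bigoplus_{i=0}^{\ell-1}\mathcal{H}^{-i}|_U$, while $\mathcal{A}|_U$ corresponds to adjoining the elements
\begin{equation*}
y_i := T^i\Big/\prod_j t_j^{\lfloor i\alpha_j/\ell\rfloor},\qquad 1\leq i\leq \ell-1,
\end{equation*}
which are exactly the local generators of the summands $(\mathcal{H}^{(i)})^{-1}|_U$. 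Each $y_i$ is integral over $R$ because $y_i^\ell$ is a unit multiple of $\prod_j t_j^{\,i\alpha_j - \ell\lfloor i\alpha_j/\ell\rfloor} \in R$, so $\mathcal{A}|_U$ sits between $R[T]/(T^\ell-f)$ and its integral closure in the total quotient ring.

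The main obstacle — and the real content of the proposition — is to upgrade this containment to an equality. I would verify Serre's criterion $R_1 + S_2$ for $\mathcal{A}|_U$: condition $S_2$ descends from $R$ because $\mathcal{A}|_U$ is free of rank $\ell$ over $R$, while $R_1$ must be checked at every codimension-one point of $\mathcal{Spec}(\mathcal{A})$. The only subtle case is that of points lying above a ramification component $E_j$, and there the round-down exponents $\lfloor i\alpha_j/\ell\rfloor$ are exactly what is needed so that the local ring becomes a discrete valuation ring, with uniformizer obtained from $y_{\ell/\gcd(\ell,\alpha_j)}$ up to a unit. This codimension-one local computation is where the round-down definition of $\mathcal{H}^{(i)}$ reveals its necessity, and it is the step on which the proof genuinely hinges.
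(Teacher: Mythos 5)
The paper itself offers no proof of this proposition --- it is quoted from Esnault--Viehweg, and the citation to Claim~3.12 of \cite{EV} stands in for the argument --- so the comparison is really with the proof in \cite{EV}. Your outline reproduces that proof's skeleton faithfully: the multiplication maps from $\lfloor x\rfloor+\lfloor y\rfloor\leq\lfloor x+y\rfloor$ together with the wrap-around via $\mathcal{H}^{-\ell}\iso\OO_Y(-E)$, finiteness from coherence of $\mathcal{A}$, the local embedding of $\mathcal{A}$ between the naive cover $R[T]/(T^\ell-f)$ and its normalization, and Serre's criterion with all the content concentrated in codimension one over the components $E_j$. You also correctly identify that last step as the one that matters.

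Two things need repair. First, the statement is false without hypotheses that your sketch uses silently: $Y$ must be normal (your $S_2$ descent and the unramified case of $R_1$ both need it), and $\ell$ must be prime to $\chara(k)$ --- otherwise the cover is nowhere generically \'etale, and even $R[T]/(T^p-u)$ with $u$ a unit over a regular base can fail normality (take $u=1+x^py$, so that $((T-1)/x)^p=y$ is integral but not in the ring). In this paper these hypotheses hold because $P$ is smooth and $\ell\mid p+1$, but they have to be invoked. Second, your uniformizer is the wrong element: writing $d=\gcd(\ell,\alpha_j)$ and normalizing the valuation over the generic point of $E_j$ so that $t_j$ has value $\ell/d$, one computes that $y_i$ has value $\frac{1}{d}\,(i\alpha_j\bmod\ell)$; hence $y_{\ell/d}$ has value $0$ (it is a unit there, and for $d=1$ the index $\ell/d=\ell$ is not even in range), while the actual uniformizer is $y_i$ for the index $i$ with $i\alpha_j\equiv d\pmod{\ell}$. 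Note also that the ring over that generic point need not be local --- the fibre can split --- so the correct thing to verify is regularity in codimension one rather than ``the local ring is a DVR.'' With the hypotheses added and the codimension-one computation actually carried out (the values $\frac{1}{d}\,(i\alpha_j\bmod\ell)$ for $0\leq i\leq\ell-1$ exhaust the value group of each branch, which is exactly what integral closedness at height-one primes requires), your plan does close.
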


\begin{Corollary}
\label{normality}
For every pre-Tango curve $C$, the above constructed 
surface $X$ is normal. In particular, $X$ is Cohen-Macaulay.
Moreover, $X$ is smooth if $C$ is a Tango curve.
\end{Corollary}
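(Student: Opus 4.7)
The plan is to deduce normality from Esnault--Viehweg's Proposition~\ref{EVprop}, then get Cohen--Macaulayness for free via Serre's $S_2$ criterion in dimension two, and finally check smoothness in the Tango case by a local computation at the ramification divisor.

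For normality, I would take $Y = P$, $\mathcal{H} = \mathcal{M}^{-1}$, and the effective divisor $E + C''$ (with coefficients $\alpha_1 = \alpha_2 = 1$). By construction $\mathcal{H}^\ell = \mathcal{M}^{-\ell} = \mathcal{O}_P(E + C'')$, so the hypotheses of Proposition~\ref{EVprop} are met. Since $1 < \ell$, we have $\bigl[\tfrac{i}{\ell}(E+C'')\bigr] = 0$ for all $0\leq i\leq \ell-1$, so
\begin{equation*}
\mathcal{H}^{(i)} = \mathcal{H}^i \otimes \mathcal{O}_P\bigl(-\bigl[\tfrac{i}{\ell}(E+C'')\bigr]\bigr) = \mathcal{M}^{-i},
\end{equation*}
and hence $\mathcal{A} = \bigoplus_{i=0}^{\ell-1}(\mathcal{H}^{(i)})^{-1} = \bigoplus_{i=0}^{\ell-1}\mathcal{M}^i = \psi_*\mathcal{O}_X$. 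One must check that the algebra structure given in Section~\ref{section:cyclicCover} coincides with the one in \cite{EV}; this is a direct comparison of multiplication maps since both are determined by the canonical isomorphism $\mathcal{M}^{-\ell}\iso \mathcal{O}_P(E+C'')$ and the section $\zeta = s\tensor t''$. Once the algebras are identified, Proposition~\ref{EVprop} gives that $X = \mathcal{S}\!\mathit{pec}(\mathcal{A})$ is normal.

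Cohen--Macaulayness is then immediate: $X$ is normal of dimension $2$, so it satisfies Serre's condition $S_2$, which in dimension $2$ coincides with being Cohen--Macaulay.

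For the smooth case, assume $C$ is Tango. By Theorem~\ref{theorem:smoothnessofC''}, $C''$ is smooth, and $E \iso C$ is smooth by construction; together with $E\cap C'' = \emptyset$, the branch divisor $E + C''$ is a smooth (reduced) divisor on the smooth surface $P$. The condition $\ell\mid p+1$ forces $\gcd(\ell,p)=1$, so the cyclic cover is tame. I would conclude by a local computation: in an \'etale neighborhood of a point of $E$ (the case of $C''$ is symmetric), choose local coordinates $(u,v)$ on $P$ so that $E = \{u=0\}$ and a local trivialization of $\mathcal{M}$; then $\psi^{-1}$ of this neighborhood is $\Spec\mathcal{O}_{P,\bullet}[t]/(t^\ell - u\cdot h)$ where $h$ is a local equation of $C''$, hence a unit near this point. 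The Jacobian of $t^\ell - uh$ with respect to $(t,u,v)$ has the entry $-h \ne 0$ in the $u$-slot, so $X$ is smooth at this point. The analogous argument at points of $C''$ completes the proof.

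The only delicate step is the algebra-structure identification in the first paragraph; the rest is formal. In particular one needs to verify that the Esnault--Viehweg algebra multiplication (coming from their chosen isomorphism $\mathcal{H}^\ell\iso\mathcal{O}_Y(\sum\alpha_jE_j)$) agrees with the multiplication defined via $\zeta = s\tensor t''$ here, but this is a routine check once both are written down locally.
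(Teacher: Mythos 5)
Your proposal is correct and follows essentially the same route as the paper: normality via Esnault--Viehweg's Proposition~\ref{EVprop} with $(P,\mathcal{M}^{-1},\ell,E+C'')$ and the observation that $\bigl[\tfrac{i}{\ell}(E+C'')\bigr]=0$ because $E+C''$ is reduced, Cohen--Macaulayness from Serre's $(S_2)$ in dimension two, and smoothness in the Tango case from the smoothness of the branch divisor $E+C''$ guaranteed by Theorem~\ref{theorem:smoothnessofC''}. You merely supply more detail than the paper does on the identification of the two algebra structures and on the local Jacobian computation for the tame cover, both of which are sound.
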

\begin{proof} We apply Prop.~\ref{EVprop} in the case of
$(X, {\cal H}, \ell, E) := (P, {\cal M}^{-1},\ell, E+C'')$.
Since $C''$ and $E$ are reduced curves, we have 
$\OO_P([\frac{i}{\ell}(E+C'')])  
=\OO_P$ for $0\leq i\leq \ell-1$. 
Thus  ${\cal Spec(A)}$ is nothing but our surface $X$
and normal.
Since $\dim X=2$, we know that $X$ is Cohen-Macaulay 
by Serre's $(S_2)$ condition.
The final statement follows from
Theorem~\ref{theorem:smoothnessofC''}.
\end{proof}

\begin{Remark}
\label{section:mukaisconstruction}
{\em
The cyclic cover constructed by Mukai \cite{Mu} is more general
than ours.
Let ${\cal L}, D, {\cal N}, e$ be as in 
\ref{section:cyclicCover}. Choose $\ell\geq 2$ such that 
$\ell\mid e$ and $(\ell,p)=1$. Notice that the last condition is 
weaker than our condition $\ell\mid (p+1)$.
Mukai's construction is as follows. 
For any $\alpha\in\NN$ such 
that $\ell \mid (p+\alpha)$ we write
\begin{equation*}
0\sim C'' - p E + p\pi^*(D) 
= C'' + \alpha E 
      + \ell K
\qquad 
\mbox{with}\quad
K :=  -\frac{p+\alpha}{\ell} E + \frac{p}{\ell} \pi^*D
\end{equation*}
and set 
\begin{equation*}
{\cal M}_\alpha := \OO_P(K) = 
\OO_P\left(-\frac{p+\alpha}{\ell}\right)
 \tensor \pi^*{\cal N}^{\frac{pe}{\ell}}.
\end{equation*}
Then, we have 
${\cal M}_\alpha^{-\ell} = \OO_P(C'' + \alpha E)$.
Now we consider 
\begin{equation*}
\psi': X' := {\cal Spec} \Dirsum_{i=0}^{\ell-1}{\cal M}_\alpha^i
\To P,
\end{equation*}
which is normal if and only if $\alpha=1$.
Thus we take the normalization $X$ of $X'$ to obtain 
the cyclic cover $\phi : X \to P$. 
Corollary~\ref{normality} also holds for
this construction.
But the normalization $f:X\to X'$ makes it difficult to
compute the cohomologies $H^i(X, {\cal Z}^n)
= H^i(X',f_*{\cal Z}^n)$ for a
polarization $(X,{\cal Z})$.
}
\end{Remark}

\section{Basic properties of the surfaces}

We will show  some basic properties of our surface $X$.
Also  we will define the Mumford-Szpiro type polarization 
$(X, {\cal Z})$ in the end of this section.

The cross section $\tilde{E} \subset X$ has a positive 
self-intersection number. 
\begin{Proposition}
\label{prop:E*E} 
The self-intersection number of $\tilde{E}$ is 
$\tilde{E}^2 = \displaystyle{\frac{1}{\ell}}\cdot \deg{D}\; (>0)$.
\end{Proposition}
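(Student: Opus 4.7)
The plan is to reduce the self-intersection computation on $X$ to one on the ruled surface $P$ via the cyclic cover $\psi:X\to P$, and then evaluate $E^2$ on $P$ directly from the construction of $P=\PP(\mathcal{E})$.

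First, since $\psi$ is a finite morphism of degree $\ell$ between normal surfaces, the projection formula for intersection numbers yields $(\psi^*E)^2=\ell\cdot E^2$. Combining with the identity $\psi^*E=\ell\tilde{E}$ from (\ref{liftByPsi}), I obtain $\ell^2\tilde{E}^2=\ell\cdot E^2$, hence
\begin{equation*}
\tilde{E}^2=\frac{E^2}{\ell}.
\end{equation*}
It therefore suffices to show $E^2=\deg D$ on $P$.

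Next, $E$ is by construction the zero divisor of the global section $s\in H^0(P,\OO_P(1))$ arising from $1\in H^0(C,\OO_C)$ via the subbundle inclusion in (\ref{fundamentalExSeq3}); in particular $\OO_P(E)\cong\OO_P(1)$. Consequently
\begin{equation*}
E^2=\deg\OO_P(E)|_E=\deg\sigma^*\OO_P(1).
\end{equation*}
Because $\sigma:C\to P$ is the section corresponding to the surjection $\mathcal{E}\twoheadrightarrow\mathcal{L}$ in (\ref{fundamentalExSeq3})—a fact visible locally from Remark~\ref{remark:idealsE}, where $E|_{U_i}=V(x)$ kills the $\OO_{U_i}$-summand of $\mathcal{E}|_{U_i}$ and leaves the $\mathcal{L}$-factor generated by $g_i^{-1}$—one has $\sigma^*\OO_P(1)\cong\mathcal{L}$. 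Therefore $E^2=\deg\mathcal{L}=\deg D$, and $\tilde{E}^2=\deg D/\ell$; strict positivity is immediate because $\mathcal{L}=\OO_C(D)$ is ample by the pre-Tango hypothesis, so $\deg D>0$.

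The main obstacle is pinning down the identification $\sigma^*\OO_P(1)\cong\mathcal{L}$, which depends sensitively on the convention for $\PP(\mathcal{E})$. The paper uses $P=\Proj\Sym\mathcal{E}$, so sections of $\pi$ correspond to rank-one \emph{quotients} of $\mathcal{E}$ rather than rank-one subbundles; accordingly the section giving $E$ corresponds to $\mathcal{E}\to\mathcal{L}$, not to the subbundle $\OO_C\hookrightarrow\mathcal{E}$. Once this is settled via the explicit local trivialization $P|_{U_i}=\Proj\OO_{U_i}[x,y]$ from the proof of Proposition~\ref{defideals}, the projection formula for the finite morphism $\psi$ and the standard degree calculation on the ruled surface are routine.
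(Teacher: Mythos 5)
Your proof is correct and follows essentially the same route as the paper: pull back via the degree-$\ell$ finite cover $\psi$ using $\psi^*E=\ell\tilde{E}$ and the projection formula to get $\tilde{E}^2=E^2/\ell$, then evaluate $E^2=\deg D$ on the ruled surface. The only difference is that the paper treats $E^2=\deg D$ as a known fact about the canonical section of $P=\PP({\cal E})$, whereas you justify it via $\OO_P(E)\cong\OO_P(1)$ and $\sigma^*\OO_P(1)\cong{\cal L}$ — a correct and welcome elaboration.
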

\begin{proof}
We compute $\tilde{E}^2 
= \left(
\displaystyle{\frac{\psi^*(E)}{\ell}},
\displaystyle{\frac{\psi^*(E)}{\ell}}
\right)
= \displaystyle{\frac{\deg\psi}{\ell^2}}\cdot {E}^2
= \displaystyle{\frac{1}{\ell}}\cdot {E}^2
= \displaystyle{\frac{1}{\ell}}\cdot \deg D$.
\end{proof}

Now we consider the canonical divisor $K_X$.

\begin{Proposition}
\label{prop:Kx}
$K_X \sim \phi^*\left(K_C 
- \displaystyle{\frac{p\ell - p - \ell}{\ell}}\cdot D\right)
 + (p\ell - p - \ell -1)\tilde{E}$.
\end{Proposition}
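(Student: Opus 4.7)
The plan is to apply the Riemann--Hurwitz (ramification) formula to the cyclic cover $\psi\colon X\to P$ and then to compute each piece separately. Since $\ell\mid p+1$, we have $\gcd(\ell,p)=1$, so $\psi$ is tamely ramified along the branch divisor $B=E+C''$, totally ramified of order $\ell$ along each component. Combined with $\psi^{*}E=\ell\tilde E$ and $\psi^{*}C''=\ell\tilde C''$ from (\ref{liftByPsi}), this gives on the normal surface $X$
\begin{equation*}
K_X\sim\psi^{*}K_P+(\ell-1)(\tilde E+\tilde C'').
\end{equation*}

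Next I would compute $K_P$ from the standard formula for the $\PP^1$-bundle $\pi\colon P=\PP(\cal E)\to C$, namely $K_P\sim -2\xi+\pi^{*}(K_C+\det\cal E)$, where $\xi$ is the class of $\OO_P(1)$. The defining sequence (\ref{fundamentalExSeq3}) gives $\det\cal E=\cal L=\OO_C(D)$, and since $E$ is cut out by the section $s\in H^{0}(P,\OO_P(1))$ coming from $\OO_C\hookrightarrow\cal E$, we have $\xi\sim E$. Hence
\begin{equation*}
K_P\sim -2E+\pi^{*}(K_C+D).
\end{equation*}

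Then I need to rewrite $\tilde C''$ in terms of $\tilde E$ and $\phi^{*}D$. By construction $\OO_P(C'')\iso\OO_P(p)\tensor\pi^{*}\cal L^{-p}$, so $C''\sim pE-p\pi^{*}D$ on $P$; pulling back by $\psi$ and dividing by $\ell$ yields
\begin{equation*}
\tilde C''\sim p\tilde E-\frac{p}{\ell}\phi^{*}D.
\end{equation*}

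Finally, I would substitute these into the Hurwitz formula and collect terms. The coefficient of $\tilde E$ becomes $-2\ell+(\ell-1)(p+1)=p\ell-p-\ell-1$, and the coefficient of $\phi^{*}D$ becomes $1-p(\ell-1)/\ell=-(p\ell-p-\ell)/\ell$, giving the claimed formula. The only subtle point is justifying the tame ramification contribution $(\ell-1)(\tilde E+\tilde C'')$ when $C$ is only pre-Tango (so that $C''$ and $\tilde C''$ may be singular): one must argue either locally, using the explicit equation $c_ix^{p}+y^{p}$ from Proposition~\ref{defideals} together with the local description of the cyclic cover from $\cal M^{-\ell}=\OO_P(E+C'')$, or more conceptually by observing that the relation $K_X\sim\psi^{*}K_P+(\ell-1)(\tilde E+\tilde C'')$ is a statement about Weil divisor classes on the normal surface $X$ (Corollary~\ref{normality}) which holds generically along each ramified component and hence everywhere by normality of $X$. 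This justification is the main technical point; the rest is bookkeeping.
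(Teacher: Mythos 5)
Your proposal is correct and follows essentially the same route as the paper: the branch (Riemann--Hurwitz) formula $K_X \sim \psi^*K_P + (\ell-1)(\tilde{E}+\tilde{C}'')$, the substitution $\tilde{C}'' \sim p\tilde{E} - \frac{p}{\ell}\phi^*D$ obtained from $C'' \sim pE - p\pi^*D$, and the standard formula $K_P = -2E + \pi^*K_C + \pi^*D$, followed by collecting terms. The extra care you take in justifying the ramification contribution when $C''$ is singular (working with Weil divisor classes on the normal surface $X$) is a point the paper passes over silently, and your arithmetic checks out.
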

\begin{proof}
We have 
$K_X \sim  \psi^*K_P + (\ell -1)\tilde{E} + (\ell -1)\tilde{C}''$
by the branch formula. Applying $\psi^*$ to $C^{''} \sim p E - p \pi^*D$
to obtain 
$\tilde{C}'' \sim p \tilde{E} - \frac{p}{\ell}\cdot \phi^*(D)$.
Then a direct computation, together with the 
well known formula $K_P = -2 E + \pi^*K_C + \pi^*(D)$,
shows the required result.
\end{proof}

\begin{Proposition}
\label{prop:ampleKx}
$K_X$ is ample if $(p,\ell) = (3,4)$ or $p\geq 5$.
\end{Proposition}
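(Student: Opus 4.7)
The plan is to write $K_X$ as an ample-plus-nef decomposition and invoke Nakai-Moishezon on the normal surface $X$ (using that the sum of an ample class and a nef class on a surface is ample). By Proposition~\ref{prop:Kx},
$$K_X \sim k\tilde{E} + \phi^*M,\qquad k := p\ell - p - \ell - 1,\qquad M := K_C - \frac{p\ell - p - \ell}{\ell}D.$$
Since $k = (p-1)(\ell-1) - 2$, we have $k = 4$ when $(p,\ell)=(3,4)$ and $k \ge 2$ whenever $p \ge 5$, so $k > 0$ throughout. The pre-Tango inequality $\deg K_C = 2g-2 \ge p\,\deg D$ gives $\deg M \ge \frac{p+\ell}{\ell}\deg D > 0$.

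The ample class on $X$ will be pulled back from $P$. Take any ample divisor $H$ on $C$ and set $H' := E + \pi^*H$. Using $E^2 = \deg D$ (which follows from Proposition~\ref{prop:E*E} and the projection formula, since $\psi^*E = \ell\tilde{E}$), $E \cdot F = 1$ for a fiber $F$ of $\pi$, and $E \cdot C'' = 0$ balanced by $\pi^*H \cdot C'' = p\deg H > 0$, the Nakai-Moishezon criterion on $P$ yields that $H'$ is ample on $P$. Since $\psi$ is finite, the pullback $\psi^*H' = \ell\tilde{E} + \phi^*H$ is ample on $X$. Moreover $\tilde{E}$ itself is nef on $X$: $\tilde{E}^2 = \deg D/\ell > 0$ by Proposition~\ref{prop:E*E}, and any irreducible curve $\Gamma \ne \tilde{E}$ meets the prime effective divisor $\tilde{E}$ nonnegatively.

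For any rational $\alpha$ with $0 < \alpha \le \min\{k/\ell,\,\deg M/\deg H\}$, the identity
$$K_X \sim \alpha\,\psi^*H' + \bigl[(k - \alpha\ell)\tilde{E} + \phi^*(M - \alpha H)\bigr]$$
expresses $K_X$ as the sum of an ample class and a nonnegative combination of the two nef classes $\tilde{E}$ and $\phi^*(M - \alpha H)$ (the latter being nef because $\deg(M - \alpha H) \ge 0$). Hence $K_X$ is ample.

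The main delicacy is that $\tilde{E}$ is not itself ample, owing to $\tilde{E} \cdot \tilde{C}'' = 0$, so the argument must genuinely pull the ample class back from $P$ rather than use $\tilde{E}$ directly; positivity of $k$ is exactly what allows $\tilde{E}$ to absorb a small multiple of $\psi^*H'$ without forcing a negative coefficient. This is also where the hypothesis on $(p,\ell)$ is sharp: the excluded triples $(2,3),\,(3,2),\,(3,3)$ are precisely those with $k \le 0$, for which $K_X$ reduces to $\phi^*M$ pulled back from a curve and cannot be ample.
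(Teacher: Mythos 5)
Your proof is correct, and it starts from the same decomposition $K_X \sim (p\ell-p-\ell-1)\tilde{E} + \phi^*M$ of Proposition~\ref{prop:Kx}, with the same numerical check that both the coefficient $k=p\ell-p-\ell-1$ and $\deg M$ are positive precisely for $(p,\ell)=(3,4)$ or $p\geq 5$; the divergence is in the final positivity step. The paper applies Nakai--Moishezon directly to $K_X$ on $X$, asserting $K_X^2>0$ and $K_X\cdot\Gamma>0$ for every irreducible curve $\Gamma$ via Proposition~\ref{prop:E*E}, which implicitly requires a curve-by-curve analysis on the (possibly only normal) surface $X$. You instead verify Nakai--Moishezon for the auxiliary divisor $E+\pi^*H$ on the smooth ruled surface $P$, pull it back along the finite morphism $\psi$, and exhibit $K_X$ as an ample class plus a nonnegative combination of the nef classes $\tilde{E}$ and $\phi^*(M-\alpha H)$, quoting the standard fact that ample plus nef is ample. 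This packaging is arguably more robust: it isolates cleanly the role of $\tilde{C}''$ (disjoint from $E$, hence the reason $\tilde{E}$ alone is merely nef), and it moves all curve-theoretic verification to the smooth surface $P$, at the cost of invoking the ample-plus-nef lemma for $\QQ$-divisors. One small correction to your closing remark: the pair $(3,3)$ is excluded because $3\nmid p+1=4$, not because $k\leq 0$; indeed $k=(p-1)(\ell-1)-2=2>0$ there. Among admissible pairs (those with $\ell\geq 2$ and $\ell\mid p+1$), the ones with $k\leq 0$ are exactly $(2,3)$ and $(3,2)$, both with $k=0$, matching the paper's list once the inadmissible $(2,2)$ is discarded.
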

\begin{proof}
We have $K_X = \phi^*A + B$, where $A = K_C - (p\ell - p - \ell)D/\ell$,
$B = (p\ell - p - \ell -1)\tilde{E}$
by Proposition~\ref{prop:Kx}.
Since $0 < \deg D \leq \frac{2(g-1)}{p}$ and $g\geq 2$, 
we see $\deg A >0$.
Also we see that 
$\deg B \leq 0$ if and only if $(p,\ell) = (2,2), (2,3), (3,2)$.
Thus, since $\ell\mid (p+1)$, we have 
$\deg B >0$ if and only if $(p,\ell) = (3, 4)$ and $p\geq 5$.
In these cases, we have $K_X^2>0$ and $K_X.H>0$ for every irreducible 
curve $H\in\Pic(P)$ in $P$ by Proposition~\ref{prop:E*E}
(cf. Prop.~V.2.3 \cite{H}). Thus $K_X$ is ample by 
Nakai-Moishezon criterion.
\end{proof}

Now we are interested in whether $H^1(X, K_X^{-1})=0$ holds 
if $K_X$ is ample.

\begin{Lemma}We have 
\begin{equation*}
H^1(X, K_X^{-1})
=
\Dirsum_{i=0}^{\ell-1}
H^1(P,         K_P^{-1}
               - \frac{(p+1)(\ell-1+i)}{\ell}E
               + \frac{p(\ell-1+i)}{\ell}\pi^*D
           )
\end{equation*}
\end{Lemma}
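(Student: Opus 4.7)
The plan is to compute $\psi_*\OO_X(K_X^{-1})$ explicitly and then use that $\psi$ is affine, so $R^i\psi_*=0$ for $i\geq 1$ and hence $H^1(X,K_X^{-1}) = H^1(P,\psi_*K_X^{-1})$.

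First I would apply the branch formula (already used in the proof of Proposition~\ref{prop:Kx}) to write
\begin{equation*}
K_X^{-1} \sim \psi^*K_P^{-1} - (\ell-1)\tilde{E} - (\ell-1)\tilde{C}''.
\end{equation*}
Next, using the linear equivalence $\tilde{C}'' \sim p\tilde{E} - \frac{p}{\ell}\phi^*D$ derived in the proof of Proposition~\ref{prop:Kx} (recall $\frac{1}{\ell}D$ is well-defined on $C$ because $\LL = \NN^e$ with $\ell\mid e$), the relation becomes
\begin{equation*}
K_X^{-1} \sim \psi^*\!\left(K_P^{-1} + \frac{p(\ell-1)}{\ell}\pi^*D\right) - (p+1)(\ell-1)\tilde{E}.
\end{equation*}

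The key observation is now that $\ell \mid p+1$, so writing $m=(p+1)/\ell$ gives $(p+1)(\ell-1) = \ell\cdot m(\ell-1)$, and therefore
\begin{equation*}
(p+1)(\ell-1)\tilde{E} = m(\ell-1)\cdot\ell\tilde{E} = m(\ell-1)\,\psi^*E
\end{equation*}
is itself a pullback. Thus the entire divisor $K_X^{-1}$ is linearly equivalent to a pullback from $P$, and by the projection formula combined with $\psi_*\OO_X = \bigoplus_{i=0}^{\ell-1}\MM^i$,
\begin{equation*}
\psi_*K_X^{-1} \iso \left(K_P^{-1} + \frac{p(\ell-1)}{\ell}\pi^*D - m(\ell-1)E\right)\tensor \bigoplus_{i=0}^{\ell-1}\MM^i.
\end{equation*}

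Finally I would plug in $\MM^i = \OO_P(-\frac{i(p+1)}{\ell}E + \frac{ip}{\ell}\pi^*D)$ and collect terms. The coefficient of $E$ in the $i$-th summand becomes $-m(\ell-1) - \frac{i(p+1)}{\ell} = -\frac{(p+1)(\ell-1+i)}{\ell}$, and the coefficient of $\pi^*D$ becomes $\frac{p(\ell-1+i)}{\ell}$, matching the desired formula. Taking $H^1(P,-)$ of the direct sum and using $H^1(X,K_X^{-1})=H^1(P,\psi_*K_X^{-1})$ finishes the argument.

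The computation is essentially routine bookkeeping; the only nontrivial point is recognizing that the divisibility $\ell\mid p+1$ is what makes $(p+1)(\ell-1)\tilde{E}$ descend as an honest pullback, so that Lemma~\ref{tool1}(ii) (with its extra summand) is not needed and one can simply invoke Lemma~\ref{tool1}(i) (equivalently, the projection formula). Beyond that, one only needs to keep the $\mathbb{Q}$-divisor notation $\frac{a}{\ell}\pi^*D$ consistently interpreted via the fixed root $\NN$ of $\LL$.
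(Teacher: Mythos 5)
Your proof is correct and follows essentially the same route as the paper's: the branch formula together with $\tilde{C}''\sim p\tilde{E}-\frac{p}{\ell}\phi^*D$ exhibits $K_X^{-1}$ as a pullback from $P$, after which the projection formula with $\psi_*\OO_X=\bigoplus_{i=0}^{\ell-1}{\cal M}^i$ and a term-by-term collection of the coefficients of $E$ and $\pi^*D$ gives the stated decomposition. The only difference is that you make explicit the role of the hypothesis $\ell\mid p+1$ in descending $(p+1)(\ell-1)\tilde{E}$ to an integral multiple of $\psi^*E$, a point the paper leaves implicit.
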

\begin{proof}
Since $\psi: X\to P$ is an affine morphism, we have $H^1(X, K_X^{-1}) =
H^1(P, \psi_*(K_X^{-1}))$.
By branch formula and $C''\sim p E - p \pi^*D$, we have 
\begin{equation*}
K_X^{-1}
=   \psi^*\left(
             K_P^{-1} - \frac{(p+1)(\ell-1)}{\ell}E
             + \frac{p(\ell-1)}{\ell} \pi^*D
           \right)
\end{equation*}
so that 
\begin{eqnarray*}
\psi_*(\OO_X(K_X^{-1}))
& = & \psi_*\OO_X
      \tensor_{\OO_X}
       \OO_P\left(
             K_P^{-1} - \frac{(p+1)(\ell-1)}{\ell}E
             + \frac{p(\ell-1)}{\ell} \pi^*D
           \right)
\end{eqnarray*}
where 
$\psi_*\OO_X 
    = \Dirsum_{i=0}^{\ell-1}{\cal M}^i$.
Then we obtain the above stated result.
\end{proof}

Since  Kodaira vanishing holds for $P$
(see \cite{Mu, Tan2}), to show that 
$H^1(X, K_X^{-1})=0$ we have only to show that 
\begin{equation*}
L_i:= K_P +  \frac{(p+1)(\ell-1+i)}{\ell}E
               - \frac{p(\ell-1+i)}{\ell}\pi^*D
\end{equation*}
are ample for $i=0,\ldots, \ell-1$. 

\begin{Proposition}
\label{prop:KVforKx}
$H^1(X, K_X^{-1})=0$ holds for $p\geq 5$ or $p=3$ and  $\ell=e=4$.
\end{Proposition}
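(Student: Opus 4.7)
The plan is to combine the decomposition from the preceding lemma with Serre duality and Kodaira vanishing on $P$. The lemma gives
\[
H^1(X,K_X^{-1})=\bigoplus_{i=0}^{\ell-1}H^1\!\left(P,\,K_P^{-1}-\tfrac{(p+1)(\ell-1+i)}{\ell}E+\tfrac{p(\ell-1+i)}{\ell}\pi^*D\right),
\]
and Serre duality on the surface $P$ identifies the $i$-th summand with the dual of $H^1(P,K_P+L_i)$. Since $P$ is a $\PP^1$-bundle over the smooth curve $C$, Kodaira vanishing holds on $P$ (\cite{Mu,Tan2}), and it therefore suffices to prove that each $L_i$, $0\le i\le \ell-1$, is ample on $P$.

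For this I would use the canonical class formula $K_P=-2E+\pi^*K_C+\pi^*D$ to rewrite
\[
L_i=a_iE+\pi^*(K_C+b_iD),\qquad a_i:=\tfrac{(p+1)(\ell-1+i)}{\ell}-2,\ \ b_i:=1-\tfrac{p(\ell-1+i)}{\ell},
\]
and apply the Nakai--Moishezon criterion along the lines of the proof of Proposition~\ref{prop:ampleKx}. The relevant intersection numbers are $E^2=\deg D$, $E\cdot F=1$ for $F$ a fiber of $\pi$, and $(\pi^*\mathfrak d)^2=0$. Positivity against fibers is immediate: $L_i\cdot F=a_i$, and writing $p+1=m\ell$ one computes $a_0=m(\ell-1)-2$, which equals $1$ when $(p,\ell)=(3,4)$ and is $\ge 1$ for every $p\ge 5$ with $\ell\mid p+1$; hence $a_i\ge a_0\ge 1>0$ throughout the allowed range. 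For the section $E$ itself the pre-Tango bound $2g-2\ge p\deg D$ yields
\[
L_i\cdot E=a_i\deg D+(2g-2)+b_i\deg D\ \ge\ \tfrac{\ell p-1+i}{\ell}\deg D\ >\ 0.
\]

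The remaining items in Nakai--Moishezon are $L_i^2>0$ and positivity against every irreducible multisection of $\pi$. The self-intersection expands to $L_i^2=a_i\bigl(a_i\deg D+2(2g-2)+2b_i\deg D\bigr)$, and the pre-Tango bound again produces $L_i^2>0$ after a direct numerical estimate in each $(p,\ell)$. The main obstacle is controlling intersections of $L_i$ with irreducible multisections of $\pi$: since $P=\PP(\mathcal E)$ with $\mathcal E$ the indecomposable rank-$2$ bundle coming from the non-split extension $0\to\mathcal O_C\to\mathcal E\to\mathcal L\to 0$ with $\mathcal L$ ample, sub-line bundles of $\mathcal E$ must have degree strictly less than $\deg D$ (otherwise the sequence would split), which bounds the self-intersections of sections of $\pi$ from below. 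Combined with the pre-Tango bound $2g-2\ge p\deg D$, this yields $L_i\cdot Z>0$ for every irreducible curve $Z\subset P$, via the same Hartshorne~V.2.3 type criterion already invoked in the proof of Proposition~\ref{prop:ampleKx}.
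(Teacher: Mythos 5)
Your proof follows essentially the same route as the paper: reduce via the preceding lemma and Kodaira vanishing on the ruled surface $P$ to the ampleness of the $L_i$, and verify that ampleness by Nakai--Moishezon after writing $L_i$ numerically as $u_iE+v_i{\bf f}$ (your $a_i$, $b_i$ reproduce the paper's $u_i$, $v_i$). The only differences are cosmetic: you insert Serre duality before invoking Kodaira vanishing where the paper applies it directly to $H^1(P,L_i^{-1})$, and you are somewhat more explicit than the paper about the intersection-number estimates and about checking positivity against all irreducible curves rather than only $E$ and the fibers.
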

\begin{proof}
Let ${\bf f}$ be  any fiber of $\pi:P \to C$. Then 
we have $\pi^*D= \deg D\cdot {\bf f}$
and the numerical equivalence 
$K_P \equiv -2 E + 2(g-1)\cdot {\bf f} + \deg D \cdot {\bf f}$.
Thus we have  $L_i \equiv u_i\cdot E + v_i \cdot {\bf f}$
where 
\begin{equation*}
u_i :=  \frac{(p+1)(\ell-1+i)}{\ell}-2,\quad
v_i := 2g-2 - \frac{p\ell-p-\ell + pi}{\ell}\cdot\deg D.
\end{equation*}
Then, using the condition $\ell\mid (p+1)$, we can show that 
$L_i.E>0$ and $L_i.{\bf f}>0$ if $p\geq 5$ or $p=3$ and $\ell=e=4$.
Also a straightforward computation shows that
$L_i^2 >0$. Then by Nakai-Moishezon's criteria, $L_i$,
$i=0,\ldots, \ell-1$, are  ample.
\end{proof}

Next we consider the fibers $X_y := \phi^{-1}(y)\; (\subset X)$
for $y\in C$.

\begin{Proposition}
\label{prop:cusp}
Every $X_y$ has a singularity
at  the intersection with the curve $\tilde{C}''$,
which is the cusp of the form $Z^\ell = W^p$.
\end{Proposition}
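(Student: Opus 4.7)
My plan is to compute $X_{y_0}$ locally around each of its intersection points with $\tilde C''$ using the explicit description of $C''$ from Proposition~\ref{defideals}, and show that in suitable formal (or \'etale) local coordinates the defining equation becomes $Z^\ell = W^p$. Fix $y_0 \in C$, let $\tilde q \in \tilde C'' \cap X_{y_0}$, and set $q := \psi(\tilde q) \in C'' \cap \pi^{-1}(y_0)$. Since $E \cap C'' = \emptyset$, the point $q$ lies on only the $C''$-part of the branch divisor, so near $q$ the cover $\psi$ is ramified only along $C''$.

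Choose an affine $U_i \ni y_0$ small enough that $q$ lies in one of the two standard charts of $P|_{U_i}$. I will treat the chart $\{y \ne 0\} \cong \Spec \OO_{U_i}[W]$ with $W = x/y$ in detail; the other chart is symmetric and occurs exactly when $c_i(y_0) = 0$. By Proposition~\ref{defideals}, $C''$ is cut out on this chart by $c_i W^p + 1$. Writing $W_0 = W(q)$, we have $c_i(y_0) W_0^p + 1 = 0$, hence $c_i(y_0) \ne 0$. Setting $W' = W - W_0$ and using the Frobenius identity $(W_0 + W')^p = W_0^p + (W')^p$,
\[
c_i W^p + 1 \;=\; \bigl(c_i - c_i(y_0)\bigr)\, W_0^p \;+\; c_i\, (W')^p.
\]
Restriction to the fiber $y = y_0$ kills the first summand and leaves exactly $c_i(y_0)(W')^p$.

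Next I trivialize $\mathcal{M}$ on a neighborhood $V$ of $q$ so that $X|_V \cong \Spec \OO_V[Z]/(Z^\ell - h_0)$ with $h_0$ a local equation for $E + C''$; since $q \notin E$ I may write $h_0 = u \cdot (c_i W^p + 1)$ for a unit $u$. Restricting to the fiber then yields
\[
X_{y_0} \text{ near } \tilde q \;\cong\; \Spec k[W', Z]\big/\bigl(Z^\ell - u(0, W')\, c_i(y_0)\, (W')^p\bigr).
\]
The coefficient $u(0, W')\,c_i(y_0)$ is a unit at $W' = 0$; because $k$ is algebraically closed and $\gcd(\ell, p) = 1$ (which follows from $\ell \mid p + 1$), Hensel's lemma gives this unit as $v(W')^\ell$ for some unit $v$, and absorbing $v$ into $Z$ produces the local model $Z^\ell = (W')^p$, the cusp claimed. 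The one substantive checkpoint is the variant case $c_i(y_0) = 0$, where $q = [1:0]$ sits in the chart $\{x \ne 0\}$ with coordinate $V = y/x$; there $C''$ has local equation $c_i + V^p$ with $V(q) = 0$, and the same Frobenius-type computation (now $c_i$ itself vanishing on the fiber rather than the shifted Frobenius identity) produces the identical local model, completing the argument uniformly.
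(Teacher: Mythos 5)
Your proof is correct and takes essentially the same route as the paper's: locally the cyclic cover is $Z^\ell = h$ with $h$ a local equation of $E+C''$, and the restriction of $c_ix^p+y^p$ to a fiber becomes the $p$-th power of a linear form via the Frobenius identity. If anything, your version is more careful than the paper's, which sets $W=c_i^{1/p}X+Y$ without first restricting to the fiber (where $c_i$ becomes a constant of $k$ and hence genuinely has a $p$-th root); your shift $W'=W-W_0$, the separate treatment of the chart when $c_i(y_0)=0$, and the absorption of the unit by Hensel's lemma supply exactly the details the paper leaves implicit.
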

\begin{proof}
The fiber $X_y$ may have singularities 
at the intersection with $\tilde{E} + \tilde{C}''$, which 
are the inverse image of $\PP^1 \cap (E + C'')\; (\subset P)$ 
by $\psi$.
By Proposition~\ref{defideals}, $C''\subset P$ is locally 
defined by the equations
$c_i X^p + Y^p\in \OO_{U_i}[X,Y]$ with $y\in U_i$.
Thus
$Z = (c_iX^p+ Y^p)^{1/\ell}$ is a local coordinate of 
$\phi^{-1}(U_i)\subset X$.
Setting the new coordinate 
$W = c_i^{1/p}X + Y$ we have $Z^\ell = W^p$ as required.
Moreover, a similar argument shows that $\phi^{-1}\cap E$
is not singular (cf. Remark~\ref{remark:idealsE}).
\end{proof}

Although $X_y$ is birational with $\PP^1$, it has a positive 
geometric genus.

\begin{Proposition}
\label{prop:genusOfFiber}
The geometric genus of $X_y$ is 
$\displaystyle{\frac{(\ell-1)(p-1)}{2}}\;(>0)$.
\end{Proposition}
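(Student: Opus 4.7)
The plan is to invoke the adjunction formula on the normal Cohen--Macaulay surface $X$, treating $X_y$ as an effective Cartier divisor (the scheme-theoretic fibre of $\phi$ over a reduced point $y\in C$). Since $X_y$ is birational to $\PP^1$, its normalization has genus zero, so the \emph{geometric genus} must here be read as the arithmetic genus $p_a(X_y)=h^1(X_y,\OO_{X_y})=\dim H^0(X_y,\omega_{X_y})$; the positivity should reflect the contribution of the cusp $Z^\ell=W^p$ coming from $\tilde C''$ (Proposition~\ref{prop:cusp}), whose $\delta$-invariant is $(\ell-1)(p-1)/2$.

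First I would observe that $X_y\sim \phi^{*}(y)$, so $X_y^{2}=(\phi^{*}y)^{2}=0$, because any two distinct fibres of $\phi$ are disjoint.

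Next I would compute $K_X\cdot X_y$ using Proposition~\ref{prop:Kx}. The summand $\phi^{*}\!\left(K_C-\tfrac{p\ell-p-\ell}{\ell}D\right)$ is pulled back from $C$ and therefore meets the fibre $X_y$ trivially. For the remaining summand $(p\ell-p-\ell-1)\tilde E$, the composition $\tilde E \overset{\psi}{\to}E\overset{\pi}{\to}C$ is an isomorphism by construction (both factors are isomorphisms), so $\tilde E$ is a section of $\phi$ and $\tilde E\cdot X_y=1$. Hence $K_X\cdot X_y=p\ell-p-\ell-1$.

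Finally, adjunction on the Cohen--Macaulay surface $X$ (Corollary~\ref{normality}) applied to the Cartier divisor $X_y$ gives
\[
 2\,p_a(X_y)-2 \;=\; X_y^{2}+K_X\cdot X_y \;=\; p\ell-p-\ell-1,
\]
so $p_a(X_y)=\tfrac{(p-1)(\ell-1)}{2}$, as claimed. The only delicate point is verifying that $\tilde E$ is genuinely a section of $\phi$ (which follows from $\psi_{|\tilde E}$ being an isomorphism onto $E$) and that the standard adjunction identity is available for a Cartier divisor on a normal CM surface; both are mild and not computational.
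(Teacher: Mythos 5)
Your computation is correct and lands on the right number, but it takes a genuinely different route from the paper. The paper restricts $\psi$ to the fibre, views $X_y\to\pi^{-1}(y)\iso\PP^1$ as a degree-$\ell$ cover branched along $(E+C'')\cap\pi^{-1}(y)$, and applies Riemann--Hurwitz with a ramification divisor of degree $(\ell-1)(p+1)$; you instead stay on the surface and use adjunction, $2p_a(X_y)-2=X_y^2+K_X\cdot X_y=p\ell-p-\ell-1$, feeding in Proposition~\ref{prop:Kx}, $X_y^2=0$, and the fact that $\tilde E$ is a section, which gives the same value $\frac{(p-1)(\ell-1)}{2}$. Your decision to read ``geometric genus'' as arithmetic genus is not just permissible but necessary, and in fact clarifies the paper: as you note, $X_y$ is rational (over the cusp the cover is $u^\ell=t$ in an affine coordinate, so the normalization is $\PP^1$), and the degree-$(\ell-1)p$ contribution at $\tilde C''\cap X_y$ in the paper's ramification divisor is the length of $\Omega_{X_y/\PP^1}$ at the \emph{singular} point --- the different of the singular fibre --- rather than the tame contribution $\ell-1$ one would get after honestly normalizing; so the paper's Hurwitz count is in substance the conductor/different form of Riemann--Hurwitz computing $p_a(X_y)$, consistent with your $\delta$-invariant reading. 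Two small points to tighten in your write-up: adjunction in the form $2p_a(D)-2=(K_X+D)\cdot D$ requires $X$ to be Gorenstein rather than merely normal and Cohen--Macaulay, which does hold here because $X$ is locally the hypersurface $z^\ell=st''$ in a smooth threefold; and the coefficient $\frac{p\ell-p-\ell}{\ell}$ in Proposition~\ref{prop:Kx} makes $K_X$ a priori only a $\QQ$-divisor, but this is harmless for your intersection number since that summand is pulled back from $C$ and pairs to zero with the fibre in any case.
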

\begin{proof}
By normalization, we can assume that $X_y$ is smooth and 
 $\psi_y=\psi\mid_{X_y}: \phi^{-1}(y) \To  \PP^1\iso\pi^{-1}(y)$,
$y\in C$, 
is a finite separated morphism. 
By taking the normalization, we can assume from the 
beginning that $X_y:= \phi^{-1}(y)$ is a smooth curve and 
the degree $\deg \psi_y(=\ell)$ is preserved.
Also the ramification divisor for $\psi_y$ 
is $(\ell-1)(\tilde{E}\cap X_y) + (\ell-1)(\tilde{C}''\cap X_y)$,
whose degree is $(\ell -1)(p+1)$.
Thus by Hurwitz formula  we obtain the required result.
\end{proof}

Mumford and Szpiro generalized 
Raynaud's examples and obtained the following result.
\begin{Theorem}[Mumford and Szpiro \cite{M3,Szpiro}]
\label{thm:mumford-szpiro}
Let $\phi : X\to C$ be a fibration from a smooth projective surface
to a smooth projective curve and assume that 
each fiber is reduced and irreducible with positive geometric
genus. Then if there exists a cross section $\Gamma \subset X$
of $\phi$ with positive self intersection number, we have 
$(i)$ ${\cal Z} = \OO_X(\Gamma)\tensor 
\phi^*(\phi_*\OO_X(\Gamma)\vert_\Gamma)$ 
is ample, and 
$(ii)$ $H^1(X, {\cal Z}^{-1})\ne 0$. 
\end{Theorem}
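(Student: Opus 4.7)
The plan is to prove $(i)$ via the Nakai--Moishezon criterion and $(ii)$ by combining the Leray spectral sequence for $\phi$ with relative Grothendieck--Serre duality. Throughout, I identify $L := \phi_*\OO_X(\Gamma)|_\Gamma$ with the normal bundle $N_{\Gamma/X}$, viewed as a line bundle on $\Gamma \cong C$ via $\phi|_\Gamma$; this identification immediately gives $\deg L = \Gamma^2 > 0$.

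For $(i)$, the relevant intersection numbers are immediate from this identification: for a fiber $f$ of $\phi$, ${\cal Z} \cdot f = \Gamma \cdot f = 1$; for the section itself, ${\cal Z} \cdot \Gamma = \Gamma^2 + \phi^*L \cdot \Gamma = 2\Gamma^2 > 0$; and ${\cal Z}^2 = \Gamma^2 + 2\deg L = 3\Gamma^2 > 0$. Any other irreducible curve $H$ is horizontal since fibers are assumed irreducible, so $\phi^*L \cdot H = (\deg L)(f \cdot H) > 0$ and $\Gamma \cdot H \geq 0$, whence ${\cal Z} \cdot H > 0$. Nakai--Moishezon then gives ampleness.

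For $(ii)$, I would first push down ${\cal Z}^{-1}$. From the standard sequence $0 \to \OO_X(-\Gamma) \to \OO_X \to \OO_\Gamma \to 0$, using $\phi_*\OO_X = \OO_C = (\phi|_\Gamma)_*\OO_\Gamma$ and $R^1\phi_*\OO_\Gamma = 0$, one obtains $\phi_*\OO_X(-\Gamma) = 0$ and $R^1\phi_*\OO_X(-\Gamma) \cong R^1\phi_*\OO_X$. The projection formula then yields $\phi_*{\cal Z}^{-1} = 0$ and $R^1\phi_*{\cal Z}^{-1} \cong R^1\phi_*\OO_X \otimes L^{-1}$, so the Leray spectral sequence collapses to
\[
H^1(X, {\cal Z}^{-1}) \cong H^0(C, R^1\phi_*\OO_X \otimes L^{-1}).
\]
The fibers of $\phi$ are Cartier divisors in the smooth surface $X$ and thus Gorenstein, and $p_a$ is constant by flatness, so relative Grothendieck--Serre duality identifies $R^1\phi_*\OO_X \cong (\phi_*\omega_{X/C})^\vee$. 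It therefore suffices to exhibit a non-zero homomorphism $\phi_*\omega_{X/C} \to L^{-1}$.

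The natural candidate is the restriction map coming from the adjunction computation $\omega_{X/C}|_\Gamma \cong L^{-1}$, itself a consequence of $\omega_X|_\Gamma \cong \omega_\Gamma \otimes N_{\Gamma/X}^{-1}$ together with $\omega_\Gamma = \omega_C$ under $\phi|_\Gamma$. The hard part will be verifying that this restriction is genuinely non-zero: on the stalk at a generic $y \in C$ it coincides with the evaluation $H^0(X_y, \omega_{X_y}) \to \omega_{X_y}|_{p_y}$ at $p_y := \Gamma \cap X_y$, and this is precisely where the two standing hypotheses enter the argument. The Gorenstein property of $X_y$ makes $\omega_{X_y}$ a line bundle with $h^0(\omega_{X_y}) = p_a(X_y) \geq p_g(X_y) > 0$, and Riemann--Roch on the Gorenstein curve $X_y$ yields $h^0(\omega_{X_y}(-p_y)) = h^0(\omega_{X_y}) - 1$, so some global section of $\omega_{X_y}$ does not vanish at $p_y$ and the evaluation is non-zero. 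The corresponding non-zero element of $\Hom_C(\phi_*\omega_{X/C}, L^{-1}) \cong H^0(C, R^1\phi_*\OO_X \otimes L^{-1})$ then witnesses $H^1(X, {\cal Z}^{-1}) \neq 0$, completing the argument.
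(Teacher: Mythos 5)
The paper itself gives no proof of this theorem: it is quoted from Mumford and Szpiro \cite{M3,Szpiro} and used as a black box, so there is no internal argument to compare yours against. Your proof is essentially the classical one behind the citation, and it is correct in outline and in almost all details. Part $(i)$ is a routine Nakai--Moishezon check once one observes that $\phi_*\OO_X(\Gamma)|_\Gamma$ is the normal bundle of $\Gamma$ transported to $C$ (so has degree $\Gamma^2$), and that an irreducible curve contained in a fiber must equal that fiber because fibers are reduced and irreducible. Part $(ii)$ correctly reduces, via Leray, the projection formula, the isomorphism $R^1\phi_*\OO_X(-\Gamma)\iso R^1\phi_*\OO_X$, and relative duality for the Gorenstein fibration, to exhibiting a nonzero map $\phi_*\omega_{X/C}\to L^{-1}$, which restriction to $\Gamma$ provides.

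The one step I would not accept as written is the claim that Riemann--Roch ``yields $h^0(\omega_{X_y}(-p_y))=h^0(\omega_{X_y})-1$.'' On the integral Gorenstein curve $X_y$, Riemann--Roch together with Serre duality gives $h^0(\omega_{X_y}(-p_y))=h^0(\omega_{X_y})-2+h^0(\OO_{X_y}(p_y))$, so what you actually need is $h^0(\OO_{X_y}(p_y))=1$. This is true but requires two observations you do not make: first, $\Gamma\cdot X_y=1$ forces $p_y$ to be a smooth point of $X_y$ (local intersection multiplicity one means the two local equations generate the maximal ideal of $\OO_{X,p_y}$), so $\OO_{X_y}(p_y)$ is a degree-one line bundle; second, if it had two sections, the resulting finite birational morphism $X_y\to\PP^1$ would be an isomorphism by normality of $\PP^1$, contradicting $p_a(X_y)\geq p_g(X_y)>0$. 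With that supplement the evaluation map $H^0(X_y,\omega_{X_y})\to\omega_{X_y}|_{p_y}$ is surjective and your conclusion follows. A final sentence would also be welcome justifying the identification of the fibre at $y$ of the restriction map $\phi_*\omega_{X/C}\to L^{-1}$ with that evaluation map: this is cohomology and base change, legitimate here because $h^0(X_y,\omega_{X_y})=h^1(X_y,\OO_{X_y})=p_a$ is constant (fibres being reduced, connected and Gorenstein), so $\phi_*\omega_{X/C}$ is locally free and commutes with base change by Grauert's theorem.
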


By Proposition~\ref{prop:E*E} and
Proposition~\ref{prop:genusOfFiber}, 
we know that our surface $X$ is an instance of 
this theorem when  $\Gamma = \tilde{E}$. Moreover, 

\begin{Proposition}
\label{prop:polarization}
In this case, we have 
${\cal Z} = 
\OO_X(\tilde{E})\tensor\phi^*{\cal N}^{e/\ell}
= \OO_X(\tilde{D})$
where $\tilde{D} = \psi^{-1}(E) + \phi^{-1} D'
\mbox{ with } D' = \frac{1}{\ell}D$.
\end{Proposition}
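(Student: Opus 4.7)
The strategy is to substitute $\Gamma = \tilde{E}$ into the Mumford-Szpiro formula ${\cal Z} = \OO_X(\Gamma) \otimes \phi^*(\phi_*\OO_X(\Gamma)\vert_\Gamma)$ and identify the pulled-back factor explicitly. The first observation is that since $\psi\vert_{\tilde{E}} : \tilde{E} \to E$ is an isomorphism (the reduced preimage of $E$ under a cyclic cover ramified along $E + C''$, which meets $C''$ nowhere along $E$) and $\pi\vert_E : E\to C$ is an isomorphism by construction, the composition $\phi\vert_{\tilde{E}} : \tilde{E}\to C$ is also an isomorphism. Under this identification $\phi_*\OO_X(\tilde{E})\vert_{\tilde{E}}$ is nothing but the normal bundle ${\cal N}_{\tilde{E}/X} = \OO_{\tilde{E}}(\tilde{E})$ transferred to $C$.

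I would then compute this normal bundle by taking its $\ell$-th tensor power. From (\ref{liftByPsi}) we have $\psi^*E = \ell\tilde{E}$, so restriction to $\tilde{E}$ gives
\begin{equation*}
\OO_{\tilde{E}}(\tilde{E})^{\otimes\ell}
\iso \psi^*\OO_P(E)\vert_{\tilde{E}}
\iso \OO_P(E)\vert_E = {\cal N}_{E/P}.
\end{equation*}
Applying $\pi_*$ to $0\to\OO_P\to\OO_P(E)\to\OO_E(E)\to 0$ (using $\pi_*\OO_P(E)=\pi_*\OO_P(1)={\cal E}$ and that $\pi\vert_E$ is an isomorphism) and comparing the resulting sequence with (\ref{fundamentalExSeq3}) identifies ${\cal N}_{E/P}$ with ${\cal L} = {\cal N}^e$ on $C\iso E$. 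Hence $\OO_{\tilde{E}}(\tilde{E})$ is an $\ell$-th root of ${\cal N}^e$ in $\Pic(C)$, which for the appropriate choice of ${\cal N}$ from Section~\ref{section:division-of-L} is ${\cal N}^{e/\ell}$. Substituting back yields ${\cal Z} = \OO_X(\tilde{E})\otimes\phi^*{\cal N}^{e/\ell}$.

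For the second equality I would set $D' = \frac{1}{\ell}D$ so that $\OO_C(D') = {\cal N}^{e/\ell}$; then $\phi^*{\cal N}^{e/\ell} = \OO_X(\phi^{-1}D')$, and combined with $\tilde{E} = \psi^{-1}(E)$ this gives $\OO_X(\tilde{E} + \phi^{-1}D') = \OO_X(\tilde{D})$. The main subtlety I expect is the $\ell$-th root identification in the middle step: a priori $\OO_{\tilde{E}}(\tilde{E})$ and ${\cal N}^{e/\ell}$ agree only up to an $\ell$-torsion element of $\Pic(C)$, so one has to verify compatibility with the specific choice of ${\cal N}$ or simply absorb any discrepancy by re-choosing ${\cal N}$ among the $e^{2g}$ possible $e$-th roots of ${\cal L}$ constructed in Section~\ref{section:division-of-L}.
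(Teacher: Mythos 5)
Your overall strategy is the same as the paper's: both arguments reduce the Mumford--Szpiro formula to identifying $\phi_*(\OO_X(\tilde{E})\mid_{\tilde{E}})$ with ${\cal N}^{e/\ell}$ under $\tilde{E}\iso E\iso C$, using $\ell\tilde{E}=\psi^*E$ and the normal-bundle computation ${\cal N}_{E/P}\iso{\cal L}$. (The paper first verifies $\psi_*\OO_{\tilde{E}}\iso\OO_E$ via Lemma~\ref{tool2} --- your observation that $\psi\mid_{\tilde{E}}$ is an isomorphism --- and then writes the key step in $\QQ$-divisor shorthand, $\OO_P(\frac{1}{\ell}E)\mid_E=\OO_C(\frac{1}{\ell}D)={\cal N}^{e/\ell}$.) Your derivation of ${\cal N}_{E/P}\iso{\cal L}$ by pushing forward $0\to\OO_P\to\OO_P(E)\to\OO_E(E)\to 0$ and comparing with (\ref{fundamentalExSeq3}) is correct, as is the final rewriting as $\OO_X(\tilde{D})$.

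The gap is exactly the one you flag at the end, and the repair you propose does not work. Knowing only that $\OO_{\tilde{E}}(\tilde{E})^{\tensor\ell}\iso{\cal L}={\cal N}^{e}$ determines $\OO_{\tilde{E}}(\tilde{E})$ only up to an $\ell$-torsion class in $\Pic(C)$, and you cannot absorb that discrepancy by re-choosing ${\cal N}$ among the $e$-th roots of ${\cal L}$: the sheaf ${\cal N}$ enters the definition of ${\cal M}=\OO_P(-\frac{p+1}{\ell})\tensor\pi^*{\cal N}^{pe/\ell}$ and hence of $X={\cal Spec}(\Dirsum_{i=0}^{\ell-1}{\cal M}^i)$ itself, so altering ${\cal N}$ produces a different surface rather than proving the statement for the given one. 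The identification must be made integrally, and the cyclic-cover structure does this: the tautological $\ell$-th root of $\psi^*(s\tensor t'')$ gives $\OO_X(\tilde{E}+\tilde{C}'')\iso\psi^*{\cal M}^{-1}$ as honest line bundles, and since $\tilde{E}\cap\tilde{C}''=\emptyset$ this restricts to $\OO_{\tilde{E}}(\tilde{E})\iso{\cal M}^{-1}\mid_E$. Combining this with $\OO_P(1)\mid_E\iso{\cal L}={\cal N}^{e}$ (which is precisely your normal-bundle step, since $\OO_P(E)\iso\OO_P(1)$) yields
\begin{equation*}
{\cal M}^{-1}\mid_E \iso {\cal L}^{(p+1)/\ell}\tensor{\cal N}^{-pe/\ell}
= {\cal N}^{e(p+1)/\ell - pe/\ell} = {\cal N}^{e/\ell},
\end{equation*}
which pins down the line bundle exactly and closes the gap.
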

\begin{proof}
We have 
$\phi_*\OO_X(\tilde{E})\mid_{\tilde{E}}
= \phi_*(\OO_X(\tilde{E})\tensor\OO_{\tilde{E}}) 
= (\pi_*\circ \psi_*) (\psi^*\OO_P(\frac{1}{\ell}E)\tensor\OO_{\tilde{E}}) 
= \pi_*(\psi_*\OO_{\tilde{E}}\tensor \OO_P(\frac{1}{\ell}E))$.
Now from the short exact sequence
\begin{equation*}
  0 \To \OO_X(-\tilde{E}) \To \OO_X \To \OO_{\tilde{E}} \To 0
\end{equation*}
we obtain, 
\begin{equation*}
  0 \To \psi_*\OO_X(-\tilde{E}) \To \psi_*\OO_X \To 
\psi_*\OO_{\tilde{E}} \To R^1\psi_*\OO_X(-\tilde{E})
\end{equation*}
and $R^1\psi_*\OO_X(-\tilde{E})=0$ since $\psi$ is an affine morphism.
Thus 
by Lemma~\ref{tool2} we have 
\begin{equation*}
\psi_*\OO_{\tilde{E}}
\iso \psi_*\OO_X/\psi_*\OO_X(-\tilde{E})
\iso \frac{\Dirsum_{i=0}^{\ell-1}{\cal M}^i}
          {\OO_P(-E)\dirsum \Dirsum_{i=1}^{\ell-1}{\cal M}^i}
\iso \OO_E,
\end{equation*}
and then
\begin{equation*}
\phi_*\OO_X(\tilde{E})\mid_{\tilde{E}}
= \pi_*(\OO_{E}\tensor \OO_P(\frac{1}{\ell}E))
= \OO_C(\frac{1}{\ell}D) (= {\cal N}^{e/\ell}).
\end{equation*}
since $E$ is the canonical section of $\pi:P\to C$.
\end{proof}

Notice that if $e=\ell=2$ when $\chara{k}\geq 3$ and $e=\ell=3$
when $\chara{k}=2$ then ${\cal Z}$ in Proposition~\ref{prop:polarization}
is the same as the ample invertible sheaf of Raynaud's
counter-example to Kodaira vanishing.

\section{Cohomologies for Mumford-Szpiro type polarization}
\def\N{{\cal N}_\ell}

In this section, we will compute the cohomologies 
$H^i(X, {\cal Z}^n)$ for the Mumford-Szpiro type polarization $(X, {\cal
Z})$ given in Proposition~\ref{prop:polarization}, namely
${\cal Z}= \OO_X(\tilde{E})\tensor\phi^*\N$
where we set $\N = {\cal N}^{\frac{e}{\ell}}$.

First of all, we summarize the well-known facts about ruled surfaces,
which are necessary in our computation of cohomologies.

\begin{Lemma} 
\label{prop:basicsRuledSurf}
For the ruled surface $\pi:P = \PP({\cal E})\To C$, we have 
\begin{enumerate}
\item [$(i)$] 
$\pi_*\OO_P(k) = S^k({\cal E})$, which is the $k$th component
of the symmetric algebra $S({\cal E})$. We will understand 
$S^k({\cal E})=0$ for $k<0$.
\item [$(ii)$]
for a locally free sheaf ${\cal F}$ 
on $P$,
$H^i(P, \OO_P(n)\tensor \pi^*{\cal F})\iso 
H^i(C, S^n({\cal E})\tensor {\cal F})$
for $n\geq 0$ and $i\in\ZZ$.
\item [$(iii)$] 
\begin{equation*}
R^1\pi_*\OO_P(n)
 = \left\{
     \begin{array}{ll}
        0 & \mbox{if } n\geq -1   \\  
        S^{-n-2}({\cal E})^\vee\tensor{\cal L}^\vee
          & \mbox{if } n\leq -2   \\  
     \end{array}
   \right.
\end{equation*}
\end{enumerate}
\end{Lemma}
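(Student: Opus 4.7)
The plan is to establish all three parts using standard projective-bundle techniques: the construction $P = \Proj S({\cal E})$, the projection formula, cohomology and base change, and relative Serre duality. For part $(i)$, the formula $\pi_*\OO_P(k) = S^k({\cal E})$ for $k \geq 0$ is immediate from the identification $P = \PP({\cal E}) = \Proj S({\cal E})$ (Hartshorne II.7), while the vanishing for $k < 0$ follows from the fiberwise vanishing $H^0(\PP^1, \OO(k)) = 0$ together with cohomology and base change.

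For part $(ii)$, I would apply the projection formula
\[
R^i\pi_*(\OO_P(n) \tensor \pi^*{\cal F}) \iso R^i\pi_*\OO_P(n) \tensor {\cal F}.
\]
Combined with part $(iii)$, which gives $R^1\pi_*\OO_P(n) = 0$ for $n \geq -1$, the Leray spectral sequence degenerates and yields $H^i(P, \OO_P(n) \tensor \pi^*{\cal F}) \iso H^i(C, S^n({\cal E}) \tensor {\cal F})$ for all $n \geq 0$.

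For part $(iii)$, the vanishing for $n \geq -1$ again comes from fiberwise vanishing $H^1(\PP^1, \OO(n)) = 0$ together with cohomology and base change. For $n \leq -2$ I would invoke relative Serre duality: the relative Euler sequence for a projective bundle yields $\omega_{P/C} \iso \OO_P(-2) \tensor \pi^*\det{\cal E}$, and the extension (\ref{fundamentalExSeq3}) gives $\det{\cal E} = {\cal L}$. Hence
\[
R^1\pi_*\OO_P(n) \iso \bigl(\pi_*(\OO_P(-n-2) \tensor \pi^*{\cal L})\bigr)^\vee,
\]
and since $-n-2 \geq 0$, part $(i)$ together with the projection formula identifies this with $(S^{-n-2}({\cal E}) \tensor {\cal L})^\vee = S^{-n-2}({\cal E})^\vee \tensor {\cal L}^\vee$. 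Since all three items are classical, no substantive obstacles are anticipated; the main technical step is the correct identification of $\omega_{P/C}$ via the Euler sequence and the bookkeeping that $\det{\cal E} = {\cal L}$.
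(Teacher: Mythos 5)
Your proposal is correct and follows essentially the same route as the paper: part $(i)$ from the standard $\Proj S({\cal E})$ description, part $(iii)$ from the duality formula $R^1\pi_*\OO_P(n)\iso \pi_*\OO_P(-n-2)^\vee\tensor{\cal L}^\vee$ (the paper cites Hartshorne Exer.~III.8.4(c) where you derive it from relative Serre duality and $\omega_{P/C}\iso\OO_P(-2)\tensor\pi^*\det{\cal E}$ with $\det{\cal E}={\cal L}$), and part $(ii)$ from the projection formula, the vanishing of $R^1\pi_*$ for $n\geq 0$, and the Leray spectral sequence. The only cosmetic difference is that you unwind the cited exercise and obtain the $n\geq -1$ vanishing by base change rather than from the duality formula itself; both are standard and correct.
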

For an extension of $(ii)$ with ${\cal F}$ any coherent
sheaf, see Proposition~7.10.13~\cite{Ando}.
\begin{proof}
$(i)$ is Proposition~II.7.11(a)~\cite{H}. 
Now we have 
$R^1\pi_*\OO_P(n)
= \pi_*\OO_P(-(n+2))^\vee \tensor {\cal L}^\vee$
by Exer.~III.8.4(c)~\cite{H}. Then
applying $(i)$ we obtain $(iii)$, cf. Appendix~A~\cite{Laz1}.
Finally, we have $S^n({\cal E})\tensor{\cal F}
\iso \pi_*(\OO_P(n)\tensor\pi^*{\cal F})$ by $(i)$
and 
$R^i\pi_*(\OO_P(n)\tensor\pi^*{\cal F})=
R^i \pi_*\OO_P(n) \tensor {\cal F} = 0$ for $n\geq 0$ and $i\geq 0$
by $(iii)$. Thus we obtain $(ii)$ by Leray spectral sequence.
\end{proof}

The following vanishing result will also be used.

\begin{Proposition}
\label{prop:vanishingONcurves} 
For any $1\leq m$ and $k<e$, we have 
$H^0(C, S^m({\cal E})^\vee\tensor {\cal N}^k)=0$.
\end{Proposition}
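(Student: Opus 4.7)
The plan is induction on $m \geq 1$. For the inductive step I would dualize the standard filtration $0 \To S^{m-1}\cal{E} \To S^m\cal{E} \To \cal{L}^m \To 0$ coming from the defining extension $0 \To \OO_C \To \cal{E} \To \cal{L} \To 0$ to obtain
$$ 0 \To \cal{L}^{-m} \To S^m\cal{E}^\vee \To S^{m-1}\cal{E}^\vee \To 0, $$
tensor with $\cal{N}^k$, and take cohomology. Since $k < e \leq me$ and $\cal{N}$ is ample, $H^0(C,\cal{N}^{k-me}) = 0$, so the inductive step reduces to $H^0(C, S^{m-1}\cal{E}^\vee \tensor \cal{N}^k) = 0$, which holds for $m \geq 2$ by the induction hypothesis.

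The substance is the base case $m = 1$. Twisting $0 \To \cal{L}^{-1} \To \cal{E}^\vee \To \OO_C \To 0$ by $\cal{N}^k$, the cohomology sequence reads
$$ 0 \To H^0(\cal{N}^{k-e}) \To H^0(\cal{E}^\vee \tensor \cal{N}^k) \To H^0(\cal{N}^k) \overset{\delta_k}{\To} H^1(\cal{N}^{k-e}), $$
and $H^0(\cal{N}^{k-e}) = 0$ for $k < e$. Hence the task reduces to showing that the connecting map $\delta_k$, which is cup product with the extension class $\xi \in H^1(\cal{L}^{-1})$ of (\ref{fundamentalExSeq3}), is injective.

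The key idea is to exploit the specific form $\xi = \eta(df_0)$, where $0 \ne df_0 \in H^0(\cal{B}^1 \tensor \cal{L}^{-1})$ and $\eta$ is the connecting map of $0 \To \cal{L}^{-1} \To F_*\OO_{C'} \tensor \cal{L}^{-1} \To \cal{B}^1 \tensor \cal{L}^{-1} \To 0$. Tensoring (\ref{B1exactseq}) by $\cal{N}^{k-e}$ produces a further connecting map
$$ \eta^{(k)} : H^0(\cal{B}^1 \tensor \cal{N}^{k-e}) \To H^1(\cal{N}^{k-e}), $$
and naturality of cup product gives $\delta_k(\sigma) = \eta^{(k)}(\sigma \cdot df_0)$ for $\sigma \in H^0(\cal{N}^k)$. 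For $\sigma \ne 0$ the product $\sigma \cdot df_0$ is nonzero because $\cal{B}^1$ is locally free of rank $p-1$ (so $\cal{B}^1 \tensor \cal{N}^{k-e}$ is torsion free) and both sections are nonzero at the generic point. Moreover $\eta^{(k)}$ is injective because $H^0(F_*\OO_{C'} \tensor \cal{N}^{k-e}) = H^0(C', F^*\cal{N}^{k-e}) = 0$ by the projection formula, since $F^*\cal{N}^{k-e}$ has negative degree on $C'$ whenever $k < e$. Combining these yields $\delta_k(\sigma) \ne 0$, as needed.

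The main obstacle is the base case: one cannot argue using only graded pieces of the filtration, since the quotient $\cal{N}^k$ has nontrivial $H^0$ for $0 \leq k < e$. One has to use that the non-splitting of the defining extension, inherited from the pre-Tango structure via $df_0$, forces the cup-product obstruction to lifting sections of $\cal{N}^k$ to $\cal{E}^\vee \tensor \cal{N}^k$ to be nonzero; the injectivity of $\eta^{(k)}$ is exactly the technical tool that converts nonvanishing of $\sigma \cdot df_0$ into nonvanishing of $\delta_k(\sigma)$.
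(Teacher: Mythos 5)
Your proof is correct, and the route you take for the base case is genuinely different from --- and does strictly more than --- the paper's own argument. The paper works only with the filtration of $S^m({\cal E})$ induced by $\OO_C\subset{\cal E}$, recording its graded quotients as ${\cal L}^j$ for $j=1,\dots,m+1$; but already for $m=1$ that would force $\det{\cal E}\iso{\cal L}^3$ rather than ${\cal L}$, and the correct quotients are ${\cal L}^{j-1}$, $j=1,\dots,m+1$. With the correct indexing the filtration argument only yields the inclusion $H^0(C,S^m({\cal E})^\vee\otimes{\cal N}^k)\hookrightarrow H^0(C,{\cal N}^k)$, which is not zero for $0\le k<e$ --- and the proposition really is invoked with such $k$ (in Corollary~\ref{h1-for-p=2,3}, $k=e(5+n)/4>0$ for $n=-3,-4$), so the extra input you identify is genuinely needed there. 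Your inductive step is exactly the paper's filtration argument with the right indices, and your base case supplies what the filtration alone cannot: the obstruction to lifting $\sigma\in H^0(C,{\cal N}^k)$ to $H^0(C,{\cal E}^\vee\otimes{\cal N}^k)$ is, up to sign, $\sigma\cup\xi$ with $\xi=\eta(df_0)$; the compatibility of connecting maps with multiplication by $\sigma$ gives $\sigma\cup\xi=\pm\eta^{(k)}(\sigma\cdot df_0)$; the product $\sigma\cdot df_0$ is nonzero because ${\cal B}^1\otimes{\cal N}^{k-e}$ is torsion free on the integral curve $C$; and $\eta^{(k)}$ is injective because $H^0(C,F_*\OO_{C'}\otimes{\cal N}^{k-e})=H^0(C',F^*{\cal N}^{k-e})=0$, the pulled-back degree $p(k-e)\deg{\cal N}$ being negative precisely when $k<e$. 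All of these steps check out (the sign ambiguity in identifying the class of the dual extension with $\xi$ is harmless), so your argument establishes the statement in the full range $k<e$, whereas the bare filtration handles only $k<0$, where every graded piece already has negative degree.
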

\begin{proof}
Since $\rank{\cal E}=2$ and  ${\cal L}$ is the 
surjective image of ${\cal E}$ ,
we have $\rank S^m({\cal E})=m+1$ and 
there exists a  filtration
\begin{equation*}
     0= {\cal F}_0 
\subset {\cal F}_1
\subset \cdots 
\subset {\cal F}_m
\subset {\cal F}_{m+1} = S^m({\cal E})
\end{equation*}
such that ${\cal F}_j$ is a locally free sheaf of 
$\rank {\cal F}_j=j$ and 
\begin{equation*}
0\To {\cal F}_{j-1} \To {\cal F}_{j} \To  {\cal L}^j\To 0
\end{equation*}
for $j=1,\ldots, m+1$.  
%
Now taking the dual, tensoring by ${\cal N}^{k}$ and 
taking the global sections, we have for $j=2,\ldots, m+1$
\begin{equation*}
0 \To H^0(C,{\cal L}^{-j}\tensor {\cal N}^{k})
  \To H^0(C,{\cal F}_{j}^\vee\tensor {\cal N}^{k})
  \overset{\psi_j}{\To} H^0(C,{\cal F}_{j-1}^\vee\tensor {\cal N}^{k}).
\end{equation*}
If $\deg {\cal L}^{j}\tensor {\cal N}^{-k}
= (ej -k)\cdot \deg {\cal N}>0$, i.e.  $ej>k$, 
we have $H^0(C,{\cal L}^{-j}\tensor {\cal N}^{k})=0$
so that $\psi_j$ is an inclusion.
Thus we have 
\begin{equation*}
\psi_2\circ\cdots\circ\psi_{m+1}:
  H^0(C, S^m({\cal E})^\vee\tensor {\cal N}^k)
\subset 
   H^0(C, {\cal F}_1^\vee\tensor {\cal N}^k)
=  H^0(C, {\cal L}^{-1}\tensor {\cal N}^k)
\end{equation*}
if $ej>k$ for all $j=2,\ldots, m+1$, i.e.
if $2e>k$. Now $H^0(C, {\cal L}^{-1}\tensor {\cal N}^k)=0$ 
and thus $H^0(C, S^m({\cal E})^\vee\tensor {\cal N}^k)=0$,
if $\deg {\cal L}^{-1}\tensor {\cal N}^k
= (k-e)\cdot \deg {\cal N}<0$, i.e. if $e>k$.
\end{proof}

\subsection{Computation of $H^2(X, {\cal Z}^n)$}

Now we compute $H^2(X, {\cal Z}^n)$.

\begin{Proposition}
\label{h2}
For $k\geq 0$ and  $1\leq r\leq \ell-1$,
\begin{eqnarray*}
\lefteqn{H^2(X, {\cal Z}^n)}\\
& =& 
\left\{
\begin{array}{ll}
\displaystyle{\Dirsum_{i=\left[\frac{n}{p+1}+1\right]}^{\ell -1}}
    H^2(P, 
       \OO_P\left(
               -\frac{i(p+1)}{\ell} + k
             \right)
        \tensor \pi^*{\N}^{ip + n}
      )
     & \mbox{if } n = k\ell \geq 0\\
 & \\
H^2(P, \OO_P(r+1+k-\ell))\tensor \pi^*{\N}^n)  &  \\
\quad\dirsum\;
\displaystyle{\Dirsum_{i=\left[\frac{n+\ell-r}{p+1}+1\right]}^{\ell -1}}
   H^2(P, 
       \OO_P\left(
                 -\frac{i(p+1)}{\ell} + k + 1
           \right)
       \tensor \pi^*{\N}^{ip + n}
       )
     & \mbox{if } n = k\ell + r > 0\\
 & \\
H^2(P, \OO_P(n)\tensor f^*{\N}^n) & \\
\quad\dirsum\; 
\displaystyle{\Dirsum_{i=1}^{\ell-1}}
H^2(P, 
\displaystyle{\OO_P\left(
             -\frac{i(p+1)}{\ell}
         \right)
     \tensor \pi^*{\N}^{ip+n}}
    )
     & \mbox{if } n <0. \\
\end{array}
\right.
\end{eqnarray*}
\end{Proposition}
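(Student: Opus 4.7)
The plan is to push forward from $X$ to the ruled surface $P$ and then to use the decompositions of $\psi_*\OO_X(n\tilde{E})$ already worked out in Lemmas~\ref{tool2} and \ref{tool1}. Since $\psi:X\to P$ is an affine morphism (being a cyclic cover), $R^i\psi_*\mathcal{F}=0$ for $i\geq 1$ and every quasi-coherent $\mathcal{F}$ on $X$, so the Leray spectral sequence collapses and $H^2(X,\mathcal{Z}^n)\iso H^2(P,\psi_*\mathcal{Z}^n)$. Writing $\mathcal{Z}^n=\OO_X(n\tilde{E})\tensor\psi^*\pi^*\N^n$ and applying the projection formula gives $\psi_*\mathcal{Z}^n=\psi_*\OO_X(n\tilde{E})\tensor\pi^*\N^n$, so the whole problem reduces to computing $\psi_*\OO_X(n\tilde{E})$ and taking $H^2$.

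Next I would split into the three cases according to the sign of $n$ and its residue modulo $\ell$. For $n=k\ell\geq 0$ apply Lemma~\ref{tool1}$(i)$ to obtain $\psi_*\OO_X(k\ell\tilde{E})=\Dirsum_{i=0}^{\ell-1}\mathcal{M}^i(kE)$; for $n=k\ell+r$ with $1\leq r\leq\ell-1$ apply Lemma~\ref{tool1}$(ii)$ to peel off an extra summand $\OO_P((r+1+k-\ell)E)$; for $n<0$, apply Lemma~\ref{tool2} with $k=-n$ to get $\OO_P(nE)\dirsum\Dirsum_{i=1}^{\ell-1}\mathcal{M}^i$. Substituting the definition $\mathcal{M}^i=\OO_P(-i(p+1)/\ell)\tensor\pi^*\mathcal{N}^{ipe/\ell}=\OO_P(-i(p+1)/\ell)\tensor\pi^*\N^{ip}$ (this uses both $\ell\mid p+1$ and $\ell\mid e$ to keep the exponents integral) and fusing with the $\pi^*\N^n$ factor, each summand takes the form $\OO_P(m_i)\tensor\pi^*\N^{ip+n}$ with explicit integer twist $m_i$.

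The last step is to take $H^2(P,\cdot)$ and drop the summands forced to vanish. By Lemma~\ref{prop:basicsRuledSurf}$(ii)$,
\begin{equation*}
H^2(P,\OO_P(m)\tensor\pi^*\mathcal{F})\iso H^2(C,S^m(\mathcal{E})\tensor\mathcal{F})=0\quad\text{for all }m\geq 0,
\end{equation*}
since $\dim C=1$. In the first case the twist is $m_i=-i(p+1)/\ell+k$, and $m_i<0$ precisely when $i(p+1)>k\ell=n$, i.e.\ $i\geq[n/(p+1)+1]$, which matches the lower index of the sum. In the second case the twist inside the $\mathcal{M}^i((k+1)E)$-part is $m_i=-i(p+1)/\ell+(k+1)$; since $n+\ell-r=(k+1)\ell$, the condition $m_i<0$ becomes $i\geq[(n+\ell-r)/(p+1)+1]$, while the isolated $\OO_P((r+1+k-\ell)E)$-summand is kept since it may contribute whenever $r+1+k-\ell\leq -2$. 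In the third case Lemma~\ref{tool2} already delivers the stated decomposition, the $i=0$ summand being simply written out separately.

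The argument is essentially a clean substitution, so the only real obstacle is combinatorial: keeping the three case distinctions and the two floor-function cutoff indices aligned, and verifying that the identification $\ell(p+1)^{-1}\cdot i\in\ZZ$ (used implicitly in writing every twist as an integer) holds throughout. Once this bookkeeping is in place the proposition follows directly.
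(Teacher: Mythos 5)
Your proposal is correct and follows essentially the same route as the paper's own proof: degenerate the Leray spectral sequence for the affine morphism $\psi$, apply the projection formula, decompose $\psi_*\OO_X(n\tilde{E})$ via Lemmas~\ref{tool2} and~\ref{tool1}, and discard the summands with nonnegative twist using Lemma~\ref{prop:basicsRuledSurf}$(ii)$ and $\dim C=1$. The index bookkeeping you describe (the cutoffs $\left[\frac{n}{p+1}+1\right]$ and $\left[\frac{n+\ell-r}{p+1}+1\right]$) matches the paper's computation exactly.
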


\begin{proof}
Since $\psi:X\to P$ is an affine morphism, the Leray
spectral sequence degenerates so that we have 
$H^2(X, {\cal Z}^n)
= H^2(P, \psi_*(\OO_X(n\tilde{E})\tensor(\psi^*\circ\pi^*){\N}^n))
= H^2(P, \psi_*\OO_X(n\tilde{E})\tensor\pi^*{\N}^n)$.
Then by Lemma~\ref{tool2} and Lemma~\ref{tool1}, we compute
\begin{eqnarray*}
\lefteqn{H^2(X, {\cal Z}^n)}\\
& =& 
\left\{
\begin{array}{ll}
H^2(P, \OO_P(k)\tensor \pi^*{\N}^n)  & \\
\quad \dirsum\; \Dirsum_{i=1}^{\ell -1}
    H^2(P, 
       \OO_P\left(
               -\frac{i(p+1)}{\ell} + k
             \right)
        \tensor \pi^*{\N}^{ip +n}
      )
     & \mbox{if } n = k\ell \geq 0\\
 & \\
H^2(P, \OO_P(r+1+k-\ell))\tensor\pi^*{\N}^n)  &  \\
\quad\dirsum\;
\Dirsum_{i=1}^{\ell -1}
   H^2(P, 
       \OO_P\left(
                 -\frac{i(p+1)}{\ell} + k + 1
           \right)
       \tensor\pi^*{\N}^{ip+n}
       )
     & \mbox{if } n = k\ell + r > 0\\
 & \\
H^2(P, \OO_P(n)\tensor\pi^*{\N}^n) & \\
\quad\dirsum\; \Dirsum_{i=1}^{\ell-1}
H^2(P, 
     \OO_P\left(
             -\frac{i(p+1)}{\ell}
         \right)
     \tensor\pi^*{\N}^{ip+n}
    )
     & \mbox{if } n <0. \\
\end{array}
\right.
\end{eqnarray*}
Moreover, 
in the case of $n= k\ell\geq 0$,
we have  $-\frac{i(p+1)}{\ell}+k\geq 0$ if $i\leq \frac{n}{p+1}$.
Also in the case of $n=k\ell +r>0$,
we have $-\frac{i(p+1)}{\ell}+k + 1\geq 0$ if 
$i\leq\frac{n+\ell-r}{p+1}$.
Now by Lemma~\ref{prop:basicsRuledSurf}(ii), 
we have $H^2(P, \OO_P(j)\tensor \pi^*{\cal N}^n)  
= H^2(C, S^j({\cal E})\tensor {\cal N}^n) = 0$
for $j\geq 0$.  
Thus we do not have to consider 
the direct summands with the indices $i$ in the above specified 
ranges.
\end{proof}

As an immediate consequence, we have the following vanishing 
result.

\begin{Corollary}
\label{h2vanishing}
We have $H^2(X, {\cal Z}^n)=0$ 
$(i)$ if $\ell\mid n$ and $n\geq (\ell-1)(p+1)$, 
in particular $n \geq p(p+1)$, 
or
$(ii)$ if $\ell\nondiv{n}$ and $n\geq p(p+1)-1$.
In particular, 
$H^2(X, {\cal Z}^n)=0$ for all $n\geq p(p+1)$.
\end{Corollary}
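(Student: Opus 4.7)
The plan is to read off both vanishing statements directly from the decomposition of $H^2(X,{\cal Z}^n)$ given by Proposition~\ref{h2}. The only auxiliary ingredient I need is the following: for every coherent sheaf ${\cal G}$ on $C$ and every integer $m\geq -1$, one has $H^2(P,\OO_P(m)\tensor\pi^*{\cal G})=0$. This drops out of the Leray spectral sequence for $\pi\colon P\to C$: since $\dim C=1$ kills $H^p(C,-)$ for $p\geq 2$ and the $\PP^1$-fibers kill $R^q\pi_*$ for $q\geq 2$, we get $H^2(P,\OO_P(m)\tensor\pi^*{\cal G})\iso H^1(C,R^1\pi_*\OO_P(m)\tensor{\cal G})$, and Lemma~\ref{prop:basicsRuledSurf}(iii) provides $R^1\pi_*\OO_P(m)=0$ for $m\geq -1$.

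For $(i)$, with $n=k\ell$, Proposition~\ref{h2} writes $H^2(X,{\cal Z}^n)$ as a direct sum indexed by $i\in\{[\frac{n}{p+1}+1],\ldots,\ell-1\}$, and this index set is empty exactly when $n\geq (\ell-1)(p+1)$. Because $\ell\mid (p+1)$ forces $\ell\leq p+1$, the hypothesis $n\geq p(p+1)$ implies $n\geq (\ell-1)(p+1)$, which handles the ``in particular'' clause.

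For $(ii)$, with $n=k\ell+r$ and $1\leq r\leq\ell-1$, Proposition~\ref{h2} gives a stand-alone term $H^2(P,\OO_P(r+1+k-\ell)\tensor\pi^*\N^n)$ plus a direct sum indexed by $i\in\{[\frac{n+\ell-r}{p+1}+1],\ldots,\ell-1\}$. The second index set is empty whenever $n\geq p(\ell-1)+r-1$, which is implied by $n\geq p(p+1)-1$ via the easy inequality $p(p-\ell+2)\geq r$ (using $\ell\leq p+1$ and $r\leq\ell-1$). The stand-alone term vanishes by the auxiliary fact as soon as $r+1+k-\ell\geq -1$; and from $n\geq p(p+1)-1$ one extracts $k\geq p-1$, which dominates $\ell-r-2\leq p-2$.

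The final statement $H^2(X,{\cal Z}^n)=0$ for all $n\geq p(p+1)$ then follows by splitting into the cases $\ell\mid n$ and $\ell\nondiv n$ and applying $(i)$ or $(ii)$ respectively. No real obstacle arises: every step is a bookkeeping verification given Proposition~\ref{h2} and the standard Leray argument above.
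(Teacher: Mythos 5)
Your proof is correct and follows essentially the same route as the paper: both read the vanishing off the decomposition in Proposition~\ref{h2}, checking that the direct-sum index sets become empty under the stated bounds and that, in the case $\ell\nondiv n$, the stand-alone term $H^2(P,\OO_P(r+1+k-\ell)\tensor\pi^*{\N}^n)$ carries a large enough twist to vanish. The only cosmetic difference is that the paper invokes Lemma~\ref{prop:basicsRuledSurf}$(ii)$ to push that term down to $C$ (requiring twist $\geq 0$), while you use Leray together with Lemma~\ref{prop:basicsRuledSurf}$(iii)$ to get vanishing already for twist $\geq -1$; both suffice, since $n\geq p(p+1)-1$ forces $k\geq p-1\geq \ell-r-1$.
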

\begin{proof}By Proposition~\ref{h2}
and Lemma~\ref{prop:basicsRuledSurf}$(ii)$,
we know that, for $n\geq 0$, $H^2(X, {\cal Z}^n)=0$
if $(i)$ $n=k\ell\geq 0$ and $\ell -1 < \left[\frac{n}{p+1}+1\right]$,
 or
$(ii)$ $n = k\ell + r>0$, $\ell -1 < \left[\frac{n+\ell -r}{p+1}+1\right]$
and $r+1+k - \ell \geq 0$. 
The second condition of $(i)$
is equivalent to
$\frac{n}{p+1}+1 - (\ell-1)\geq 1$, i.e.,
$n\geq (p+1)(\ell-1)$.  
Since $\ell\mid(p+1)$, we have $\ell-1\leq p$
so that $n\geq p(p+1)$ implies in particular $n\geq (p+1)(\ell-1)$. 
Similarly, the second condition of $(ii)$
is equivalent to $n\geq (p+1)(\ell-1)- (\ell - r)$.
Since $\ell\mid(p+1)$ and $1\leq r\leq \ell-1$, 
We have $(p+1)(\ell-1)- (\ell - r)
\leq p(p+1) - 1$. Thus in particular the second
condition of $(ii)$ is satisfied for $n\geq p(p+1)-1$.
Moreover, by the first and the third condition of $(ii)$, we have 
$n \geq (\ell-r)(\ell-1)$. Since $1\leq r$ and 
$\ell\mid(p+1)$, we have 
$(\ell-r)(\ell-1)\leq (\ell-1)^2 \leq p^2 < p(p+1)-1$.
Thus, in this case, $n\geq p(p+1)-1$ suffices for the vanishing.
\end{proof}

\subsection{computation of $H^1(X, {\cal Z}^n)$}

The case of $n\geq 0$  can be computed by the same method as 
in Proposition~\ref{h2} except the difference in 
the dimension of cohomologies.

\begin{Proposition}
\label{h1pos}
Let $n\geq 0$. For 
$k\geq 0$, $1\leq r\leq \ell-1$, we have 
\begin{eqnarray*}
\lefteqn{H^1(X, {\cal Z}^n)}\\
& =& 
\left\{
\begin{array}{ll}
H^1(C, S^k({\cal E})\tensor {\N}^n)  & \\
\quad \dirsum\; \Dirsum_{i=1}^{\left[\frac{n}{p+1}\right]}
    H^1(C, 
         S^{-\frac{i(p+1)}{\ell} + k}({\cal E})
        \tensor {\N}^{ip+n}
      )  & \\
\quad \dirsum\; \Dirsum_{i=\left[\frac{n}{p+1}+1\right]}^{\ell -1}
    H^1(P, 
       \OO_P\left(
               -\frac{i(p+1)}{\ell} + k
             \right)
        \tensor \pi^*{\N}^{ip+n}
      )
     & \mbox{if } n = k\ell \geq 0\\
 & \\
H^1(P, \OO_P(r+1+k-\ell))\tensor\pi^*{\N}^n)  &  \\
\quad\dirsum\;
\Dirsum_{i=1}^{\left[\frac{n+\ell -r}{p+1}\right]}
   H^1(C, 
       S^{-\frac{i(p+1)}{\ell} + k + 1}({\cal E})
       \tensor {\N}^{ip+n}
       )  & \\
\quad\dirsum\;
\Dirsum_{i=\left[\frac{n+\ell -r}{p+1} + 1\right]}^{\ell -1}
   H^1(P, 
       \OO_P\left(
                 -\frac{i(p+1)}{\ell} + k + 1
           \right)
       \tensor\pi^*{\N}^{ip+n}
       )

     & \mbox{if } n = k\ell + r > 0\\
\end{array}
\right.
\end{eqnarray*}
Moreover, by Lemma~\ref{prop:basicsRuledSurf}$(ii)$, 
the first term in the case of $n=k\ell + r>0$
is $H^1(P, \OO_P(r+1+k-\ell))\tensor\pi^*{\N}^n) 
\iso 
H^1(C, S^{r+1+k-\ell}({\cal E})\tensor{\N}^n)$
if $r+k\geq \ell -1$.
\end{Proposition}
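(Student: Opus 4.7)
The plan is to follow the template of the proof of Proposition~\ref{h2}, the only difference being that $H^1$ on the curve $C$ does not in general vanish, so summands with non-negative $\mathcal{O}_P$-twist must be retained rather than discarded. First I would use that $\psi : X \to P$ is an affine morphism, so that the Leray spectral sequence degenerates and the projection formula gives
\begin{equation*}
H^1(X, \mathcal{Z}^n) \iso H^1\bigl(P,\, \psi_*\mathcal{O}_X(n\tilde{E}) \tensor \pi^*\N^n\bigr).
\end{equation*}

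Next, I would split $\psi_*\mathcal{O}_X(n\tilde{E})$ via Lemma~\ref{tool1} according to whether $n = k\ell$ (case (i)) or $n = k\ell + r$ with $1 \leq r \leq \ell-1$ (case (ii)). After tensoring with $\pi^*\N^n$ and taking $H^1$, one obtains a direct sum of terms of the shape $H^1(P,\, \mathcal{O}_P(j_i) \tensor \pi^*\N^{ip+n})$ for $i = 1, \ldots, \ell-1$, where $j_i = -i(p+1)/\ell + k$ in case (i) and $j_i = -i(p+1)/\ell + k + 1$ in case (ii), together with the leading summand $H^1(P, \mathcal{O}_P(k) \tensor \pi^*\N^n)$ in case (i) or $H^1(P, \mathcal{O}_P(r+1+k-\ell) \tensor \pi^*\N^n)$ in case (ii).

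Then for each summand with $j_i \geq 0$ I would invoke Lemma~\ref{prop:basicsRuledSurf}(ii) to identify it with $H^1(C,\, S^{j_i}(\mathcal{E}) \tensor \N^{ip+n})$; summands with $j_i < 0$ are kept on $P$, matching the form claimed in the statement. The same rewriting applies to the leading summand: it is automatic in case (i), giving $H^1(C, S^k(\mathcal{E}) \tensor \N^n)$, and requires $r + k \geq \ell - 1$ in case (ii), yielding the final clause of the statement.

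The only substantive task is bookkeeping the cutoff in $i$: I need to verify that $j_i \geq 0$ is equivalent to $i \leq \lfloor n/(p+1) \rfloor$ in case (i) and to $i \leq \lfloor (n+\ell-r)/(p+1) \rfloor$ in case (ii). This uses crucially the hypothesis $\ell \mid (p+1)$, which makes $(p+1)/\ell$ an integer and reduces both equivalences to elementary arithmetic on $k = n/\ell$ or $(n-r)/\ell$. No deeper obstacle is expected, since the argument is a direct adaptation of the proof of Proposition~\ref{h2} in which the summands previously absorbed by curve-dimension vanishing in the $H^2$-case now survive as genuine $H^1$-contributions.
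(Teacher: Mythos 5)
Your proposal is correct and follows essentially the same route as the paper: the paper's own proof of Proposition~\ref{h1pos} consists precisely of the remark that one repeats the argument of Proposition~\ref{h2} (affine pushforward along $\psi$, the decomposition from Lemma~\ref{tool1}, and Lemma~\ref{prop:basicsRuledSurf}(ii) with the same cutoffs $i\leq\left[\frac{n}{p+1}\right]$ resp.\ $i\leq\left[\frac{n+\ell-r}{p+1}\right]$), the only change being that the summands with non-negative twist now contribute $H^1(C,S^{j_i}({\cal E})\tensor{\N}^{ip+n})$ instead of vanishing. No gaps.
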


Now we consider the case of $n<0$.

\begin{Theorem}
\label{h1neg}
For $n<0$, we have 
\begin{equation*}
H^1(X, {\cal Z}^n)
= 
  \Dirsum_{i=1}^{\ell-1} 
       H^0(C, S^{\frac{i(p+1)}{\ell}-2}({\cal E})^\vee
              \tensor{\N}^{ip-\ell +n}).
\end{equation*}
\end{Theorem}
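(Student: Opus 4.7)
My plan is to mirror the reduction used in the proof of Proposition~\ref{h2}: push $H^1$ from $X$ down to $P$ via the affine morphism $\psi$, then down to the base curve $C$ via the Leray spectral sequence for $\pi$. The only genuinely new feature compared to the $H^2$ computation is that an additional ``$i=0$'' summand appears for $H^1$, and the bulk of the work is to show it vanishes.

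Since $\psi$ is affine, $H^1(X,{\cal Z}^n)=H^1(P,\psi_*({\cal Z}^n))$. For $n<0$, Lemma~\ref{tool2} together with the projection formula (using $\phi=\pi\circ\psi$) gives
$$
\psi_*({\cal Z}^n)\;=\;\bigl(\OO_P(nE)\otimes\pi^*\N^n\bigr)\;\oplus\;\bigoplus_{i=1}^{\ell-1}\OO_P\bigl(-\tfrac{i(p+1)}{\ell}\bigr)\otimes\pi^*\N^{ip+n},
$$
so $H^1(X,{\cal Z}^n)$ splits into the $i=0$ piece plus $\ell-1$ terms of the form $H^1(P,\OO_P(m)\otimes\pi^*\mathcal{G})$ with $m\le -1$.

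For each such summand with $m\le -2$, the Leray spectral sequence for $\pi$ combined with Lemma~\ref{prop:basicsRuledSurf}(iii) collapses (since $H^2(C,-)=0$) to
$$
H^1(P,\OO_P(m)\otimes\pi^*\mathcal{G})\;\cong\;H^0(C,S^{-m-2}({\cal E})^\vee\otimes{\cal L}^\vee\otimes\mathcal{G}).
$$
Using ${\cal L}=\N^\ell$ and substituting $m=-i(p+1)/\ell$ and $\mathcal{G}=\N^{ip+n}$, the $(i\ge 1)$ summands become exactly the terms $H^0(C,S^{i(p+1)/\ell-2}({\cal E})^\vee\otimes\N^{ip-\ell+n})$ that appear in the statement. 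The one edge case $m=-1$ occurs only when $\ell=p+1$ and $i=1$; here both sides vanish (the right one since $S^{-1}({\cal E})^\vee=0$), so no separate treatment is required.

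What is left is the vanishing of the $i=0$ piece $H^1(P,\OO_P(nE)\otimes\pi^*\N^n)$ for every $n<0$. I would split into three subcases: for $n=-1$, both $\pi_*$ and $R^1\pi_*$ of the relevant sheaf vanish by Lemma~\ref{prop:basicsRuledSurf}; for $n=-2$, one lands in $H^0(C,\N^{-2-\ell})$, which is zero by ampleness of $\N$; for $n\le -3$, the Leray identification above yields $H^0(C,S^{-n-2}({\cal E})^\vee\otimes\N^{n-\ell})$, and rewriting $\N^{n-\ell}=\mathcal{N}^{(n-\ell)e/\ell}$ with $(n-\ell)e/\ell<e$ allows Proposition~\ref{prop:vanishingONcurves} (applied with exponent $-n-2\ge 1$) to finish. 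The hard part will be this last case analysis: one must carefully translate between $\N$ and $\mathcal{N}$ and verify the degree bound $k<e$ needed to invoke Proposition~\ref{prop:vanishingONcurves}, without which this extra summand would survive and spoil the clean form of the answer.
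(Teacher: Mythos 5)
Your proposal is correct and follows essentially the same route as the paper: both reduce $H^1(X,{\cal Z}^n)$ to $H^0(C,R^1\pi_*(\psi_*{\cal Z}^n))$ via the affine morphism $\psi$ and the Leray spectral sequence (the paper runs Leray for $\phi$ first and then identifies $R^1\phi_*=R^1\pi_*\psi_*$, while you push to $P$ first and apply Leray for $\pi$ summand by summand, which is the same computation), and both kill the $i=0$ summand by Proposition~\ref{prop:vanishingONcurves} together with ampleness of ${\N}$. The only cosmetic difference is that the paper evaluates $R^1\pi_*{\cal M}^i$ by relative Serre duality, whereas you substitute directly into Lemma~\ref{prop:basicsRuledSurf}(iii); these agree.
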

\begin{proof}
Consider  a part of the five-term exact sequence 
\begin{equation*}
0 \To H^1(C, \phi_*{\cal Z}^n) \To H^1(X, {\cal Z}^n)
  \To H^0(C, R^1\phi_*{\cal Z}^n)
  \To H^2(C, \phi_*{\cal Z}^n)
\end{equation*}
for the Leray spectral sequence
$E_2^{p,q}= H^p(C, R^{q}\phi_*{\cal Z}^n) 
\Rightarrow H^{p+q}(X, {\cal Z}^n)$.
We have $H^2(C, \phi_*{\cal Z}^n)=0$ since $\dim C=1$,
and moreover an easy calculation using
Lemma~\ref{tool2} shows 
\begin{equation*}
H^1(C, \phi_*{\cal Z}^n)
= H^1(C, \pi_*\OO_P(n)\tensor{\N}^n)
    \dirsum
   \Dirsum_{i=1}^{\ell-1}
         H^1(C, \pi_*
		  \OO_P(-\frac{i(p+1)}{\ell})
                \tensor{\N}^{ip+n})
\end{equation*}
and this is $=0$ by Lemma~\ref{prop:basicsRuledSurf}(i).
Thus, we have 
\begin{equation}
\label{h1-eq:1}
H^1(X, {\cal Z}^n)=H^0(C, R^1\phi_*{\cal Z}^n)\quad(n<0).
\end{equation}
On the other hand, in a part of the five-term exact sequence 
\begin{equation*}
  0\To R^1\pi_*(\psi_*{\cal Z}^n)
   \To R^1\phi_*{\cal Z}^n
   \To \pi_*(R^1\psi_*{\cal Z}^n) 
\end{equation*}
for $\psi: X\to P$ and $\pi: P \to C$, 
we have 
$R^1\psi_*{\cal Z}^n=0$ since $\psi$ is an affine morphism.
Thus we have 
\begin{equation}
\label{h1-eq:2}
R^1\phi_*{\cal Z}^n = R^1\pi_*(\psi_*{\cal Z}^n)\quad (n\in \ZZ).
\end{equation}
Now an easy calculation using Lemma~\ref{tool2} 
and Lemma~\ref{prop:basicsRuledSurf}(iii) shows
\begin{equation*}
R^1\pi_*(\psi_*{\cal Z}^n)
= 
\left\{
\begin{array}{ll}
\Dirsum_{i=1}^{\ell-1}R^1\pi_*{\cal M}^i\tensor{\N}^{-1}
& \mbox{if }n = -1 \\
S^{-n-2}({\cal E})^\vee\tensor {\N}^{n-\ell}
\dirsum
     \Dirsum_{i=1}^{\ell-1}R^1\pi_*{\cal M}^i\tensor{\N}^n
& \mbox{if }n\leq -2 \\
\end{array}
\right.
\end{equation*}
But by Proposition~\ref{prop:vanishingONcurves}
and $\deg {\N}>0$, we have 
\begin{equation*}
H^0(C,S^{-n-2}({\cal E})^\vee\tensor {\N}^{n-\ell})=0 \quad 
\mbox{for }n\leq -2.
\end{equation*}
Thus by (\ref{h1-eq:1}) and (\ref{h1-eq:2})
we have 
\begin{equation}
\label{h1-eq:3}
H^1(X, {\cal Z}^n)
= \Dirsum_{i=1}^{\ell-1}
H^0(C, R^1\pi_*{\cal M}^i\tensor{\N}^n).
\end{equation}
By relative Serre duality,  the well-known formula
$\omega_{P/C} = \OO_P(-2)\tensor\pi^*{\cal L}$
and  Lemma~\ref{prop:basicsRuledSurf}$(i)$, we compute
\begin{equation*}
R^1\pi_*{\cal M^i} \iso \pi_*({\cal M}^{-i}\tensor\omega_{P/C})^\vee
= S^{\frac{i(p+1)}{\ell}-2}({\cal E})^\vee 
        \tensor {\N}^{ip-\ell}.
\end{equation*}
and we obtain the above stated result.
\end{proof}

We give here some specific instances of Theorem~\ref{h1neg}.
\begin{Example}
\label{h1neg-detailed}
Let $n<0$. Then,
\begin{itemize}
\item if $\ell = p+1$:
\begin{equation*}
H^1(X, {\cal Z}^n)
= H^0(C, {\N}^{p-1+n})
   \dirsum \Dirsum_{i=3}^p 
              H^0(C, S^{i-2}({\cal E})^\vee\tensor{\N}^{ip-p-1-n})
\end{equation*}
where the first term vanishes for $n<-(p-1)$. In particular, 
if $p=2$ (and then $\ell=3$), we have $H^1(X, {\cal Z}^{n})=0$ for
$n\leq -2$ and moreover $H^1(X, {\cal Z}^{-1})\ne{0}$ since 
this is exactly the Raynaud's counter-example.
\item if $2\ell = p+1$:
\begin{equation*}
H^1(X, {\cal Z}^n)
= H^0(C, {\N}^{\frac{p-1}{2}+n})
   \dirsum \Dirsum_{i=2}^{\frac{p-1}{2}} 
              H^0(C, S^{2i-2}({\cal E})^\vee
                     \tensor{\N}^{ip-\frac{p+1}{2}+n})
\end{equation*}
where the first term vanishes for $n<-\frac{p-1}{2}$.
In particular, if $p=3$ (and then $\ell=2$), we have $H^1(X, {\cal Z}^n)=0$ 
for $n\leq -2$ and moreover $H^1(X, {\cal Z}^{-1})\ne{0}$ since 
this is exactly the Raynaud's counter-example.
\end{itemize}
\end{Example}

Now we show some non-vanishing results.

\begin{Theorem}
\label{result1}
$H^1(X, {\cal Z}^n)\ne{0}$ for every $n$ such that 
$-(\ell-\lceil\frac{2\ell}{p+1}\rceil)\leq n\leq -1$,
where $\lceil\cdots\rceil$ denotes the round up.
\end{Theorem}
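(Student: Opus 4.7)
The plan is to apply Theorem~\ref{h1neg} and exhibit, for each $n$ with $-(\ell-\lceil 2\ell/(p+1)\rceil)\leq n\leq -1$, a single index $i\in\{1,\ldots,\ell-1\}$ making the summand $H^0(C,S^{i(p+1)/\ell-2}({\cal E})^\vee\tensor\N^{ip-\ell+n})$ nonzero. Write $q:=(p+1)/\ell$, a positive integer by the hypothesis $\ell\mid(p+1)$, and recall that $\N^\ell={\cal L}$ (since $\N={\cal N}^{e/\ell}$ and ${\cal L}={\cal N}^e$). The required nonvanishing will come from identifying a trivial subsheaf of one particular summand.

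The input I need is an inclusion ${\cal L}^{-m}\hookrightarrow S^m({\cal E})^\vee$ for every integer $m\geq 0$, obtained by dualizing the short exact sequence
\[
0\to S^{m-1}({\cal E})\to S^m({\cal E})\to{\cal L}^m\to 0
\]
that arises from applying $S^m$ to the defining extension $0\to\OO_C\to{\cal E}\to{\cal L}\to 0$ of (\ref{fundamentalExSeq3}). Twisting by $\N^{ip-\ell+n}$ and using $\N^\ell={\cal L}$ produces an injection
\[
\N^{\ell-i+n}\;\hookrightarrow\; S^{iq-2}({\cal E})^\vee\tensor\N^{ip-\ell+n},
\]
where the left-hand exponent is computed from $m=iq-2$ and $q\ell=p+1$ as $-m\ell+ip-\ell+n=\ell-i+n$. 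Taking global sections preserves this injection.

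I will then choose $i=\ell+n$. The assumption $-(\ell-\lceil 2\ell/(p+1)\rceil)\leq n\leq -1$ is equivalent to $\lceil 2\ell/(p+1)\rceil\leq i\leq \ell-1$, which places $i$ in the admissible range $\{1,\ldots,\ell-1\}$ and, since $iq\geq 2$, forces $m=iq-2\geq 0$, so the summand is well-defined. For this $i$ the exponent $\ell-i+n$ vanishes, giving an inclusion $\OO_C\hookrightarrow S^{iq-2}({\cal E})^\vee\tensor\N^{ip-\ell+n}$; the nonzero constant section this produces in turn yields, via Theorem~\ref{h1neg}, a nonzero class in $H^1(X,{\cal Z}^n)$.

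The only nontrivial step is spotting the choice $i=\ell+n$. Once it is identified, the stated range of $n$ is precisely the translate of the admissible window $\lceil 2\ell/(p+1)\rceil\leq i\leq \ell-1$, and the rest of the argument is a mechanical arithmetic verification using only the basic filtration of $S^m({\cal E})$ and the equality $\N^\ell={\cal L}$.
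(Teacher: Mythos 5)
Your proposal is correct and follows essentially the same route as the paper's proof: both extract the trivial subsheaf $\OO_C$ inside $S^{\frac{i(p+1)}{\ell}-2}({\cal E})^\vee\tensor\N^{ip-\ell+n}$ by dualizing the surjection $S^m({\cal E})\To{\cal L}^m$ coming from (\ref{fundamentalExSeq3}) and then choosing $i=\ell+n$ so that the resulting twist is trivial. The arithmetic identifying $n=-(\ell-i)$ with the admissible window $\lceil\frac{2\ell}{p+1}\rceil\leq i\leq\ell-1$ matches the paper exactly.
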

\begin{proof}
Since ${\cal L}={\cal N}^e = {\N}^\ell$ is the surjective image 
of ${\cal E}$ (cf. (\ref{fundamentalExSeq3}),
we have the short exact sequence
\begin{equation*}
           S^{\frac{k}{\ell}}({\cal E}) \To {\N}^{k}\To 0
\end{equation*}
for any $k\in\NN$ such that $\ell\mid{k}$. Taking the dual
and tensoring by ${\N}^k$, we obtain
\begin{equation*}
   0\To  \OO_C \To S^{\frac{k}{\ell}}({\cal E})^\vee\tensor{\N}^k.
\end{equation*}
Then we have 
\begin{equation*}
    k = H^0(C,\OO_C) 
    \subset H^0(C, S^{\frac{k}{\ell}}({\cal E})^\vee\tensor{\N}^k).
\end{equation*}
Applying this result, we know that 
the term $H^0(C, S^{\frac{i(p+1)}{\ell}-2}({\cal E})^\vee
              \tensor{\N}^{ip-\ell+n})$, $1\leq i\leq \ell-1$,
in  Theorem~\ref{h1neg} is non-trivial if 
\begin{equation*}
\frac{i(p+1)}{\ell}-2 \geq 0\quad\mbox{and}\quad
\ell\left(\frac{i(p+1)}{\ell}-2\right) = ip-\ell+n,
\end{equation*}
namely $n=-(\ell-i)$, with $\lceil\frac{2\ell}{p+1}\rceil\leq i\leq \ell-1$.
\end{proof}

For small characteristics, the evaluation of 
the non-vanishing degrees in Theorem~\ref{h1neg} 
is best possible.

\begin{Corollary}
\label{h1-for-p=2,3}
If $p=2$ or $3$, we have $H^1(X, {\cal Z}^n)=0$ for every
$n<-(\ell-\lceil\frac{2\ell}{p+1}\rceil)$.
\end{Corollary}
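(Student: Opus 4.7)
The plan is to use the decomposition from Theorem~\ref{h1neg},
\[
H^1(X, {\cal Z}^n) \iso \bigoplus_{i=1}^{\ell-1} H^0\bigl(C, S^{m_i}({\cal E})^\vee \tensor \N^{k_i}\bigr),
\quad m_i := \tfrac{i(p+1)}{\ell} - 2,\quad k_i := ip - \ell + n,
\]
where $m_i \in \ZZ$ because $\ell \mid (p+1)$, and to show that every summand vanishes under the stated bound on $n$.

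I would handle the summands in three groups according to the sign of $m_i$. If $m_i < 0$, then $S^{m_i}({\cal E})^\vee = 0$ by Lemma~\ref{prop:basicsRuledSurf}(i) and the summand is trivially zero. If $m_i = 0$, then $S^0({\cal E})^\vee = \OO_C$ and the summand becomes $H^0(C, \N^{k_i})$, which vanishes whenever $k_i < 0$ since $\deg \N > 0$. If $m_i \geq 1$, I would apply Proposition~\ref{prop:vanishingONcurves}: writing $\N^{k_i} = {\cal N}^{k_i e/\ell}$ (the exponent is an integer because $\ell \mid e$), the required hypothesis $k_i e/\ell < e$ simplifies to $k_i < \ell$.

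The assumption $p \in \{2,3\}$, combined with $\ell \mid (p+1)$ and $\ell \geq 2$, leaves exactly $(p,\ell) \in \{(2,3),(3,2),(3,4)\}$, which I would then check one at a time. For $(2,3)$: $m_1 = -1$ is trivial, and $m_2 = 0$ gives $k_2 = 1+n < 0$ for $n \leq -2 = -(\ell - \lceil 2\ell/(p+1) \rceil)$. For $(3,2)$: only $i=1$ contributes, with $m_1 = 0$ and $k_1 = 1+n < 0$ for $n \leq -2$. For $(3,4)$: $m_1 = -1$ is trivial, $m_2 = 0$ gives $k_2 = 2+n < 0$ for $n \leq -3$, and $m_3 = 1$ gives $k_3 = 5+n$, which satisfies $k_3 < \ell = 4$ already for $n \leq -2$, hence a fortiori for $n \leq -3 = -(\ell - \lceil 2\ell/(p+1) \rceil)$. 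Since Theorem~\ref{result1} exhibits non-vanishing at the boundary, the estimate is sharp.

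The argument is essentially bookkeeping. The only point requiring care is the $(3,4)$ case with $m_3 = 1$, where one must convert exponents from $\N$ to ${\cal N}$ in order to invoke Proposition~\ref{prop:vanishingONcurves} correctly; beyond that, no new ingredient is needed past Theorem~\ref{h1neg}, Lemma~\ref{prop:basicsRuledSurf}(i), and Proposition~\ref{prop:vanishingONcurves}.
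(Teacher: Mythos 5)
Your argument is correct and takes essentially the same route as the paper: both start from the decomposition in Theorem~\ref{h1neg}, reduce to the three pairs $(p,\ell)\in\{(2,3),(3,2),(3,4)\}$, and kill the summands via $S^{m}=0$ for $m<0$, $\deg\N>0$ for the $m=0$ terms, and Proposition~\ref{prop:vanishingONcurves} (after converting $\N$-exponents to ${\cal N}$-exponents) for the $m\geq 1$ term in the $(3,4)$ case, exactly as the paper does (it merely delegates the first two cases to Example~\ref{h1neg-detailed}). One cosmetic slip: in your case checks you write things like ``$n\leq -2 = -(\ell-\lceil 2\ell/(p+1)\rceil)$'' where the right-hand side actually equals $-1$ (resp.\ $-2$ for $(3,4)$); the intended reading $n<-(\ell-\lceil 2\ell/(p+1)\rceil)$, i.e.\ $n\leq -2$ (resp.\ $n\leq -3$), is what your computation verifies, so the substance is unaffected.
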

\begin{proof}
Since $\ell\mid p+1$, we have only to consider the cases
$(p, \ell) = (2, 3), (3, 2), (3,4)$ and 
\begin{equation*}
-\left(\ell-\lceil\frac{2\ell}{p+1}\rceil\right)
=\left\{
  \begin{array}{ll}
    -1 & \mbox{if } (p,\ell)=(2,3)\\
    -1 & \mbox{if } (p,\ell)=(3,2)\\
    -2 & \mbox{if } (p,\ell)=(3,4).\\
  \end{array}
\right.
\end{equation*}
The first two cases are already shown in Example~\ref{h1neg-detailed}.
Then we assume $(p,\ell)=(3,4)$ in the following. We have
\begin{eqnarray*}
H^1(X, {\cal Z}^n)
&=& \Dirsum_{i=1}^{3} 
       H^0(C, S^{i-2}({\cal E})^\vee
              \tensor{\N}^{3i-4 +n})\\
&=& H^0(X, {\N}^{2+n}) 
\dirsum H^0(C, S^1({\cal E})^\vee \tensor {\N}^{5+n})
\quad (n<0)
\end{eqnarray*}
by Theorem~\ref{h1neg}. Since $\deg{\N}>0$, we have 
$H^0(X, {\N}^{2+n})=0$ for $n<-2$.
Moreover, since ${\N}^{5+n}= {\cal N}^{\frac{e(5+n)}{4}}$
by definition and since $\frac{e(5+n)}{4}<e$ for $n<-2$,
we have 
$H^0(C, S^1({\cal E})^\vee \tensor {\N}^{5+n})=0$ 
for $n<-2$ by Proposition~\ref{prop:vanishingONcurves}.
Thus, in this case we have $H^1(X, {\cal Z}^n)=0$ for $n<-2$.
\end{proof}

\subsection{computation of $H^0(X, {\cal Z}^n)$}

Since ${\cal Z}$ is ample, we have $H^0(X,{\cal Z}^n)=0$ for $n<0$.
For $n\geq 0$, we have

\begin{Proposition} 
\label{h0}
For $n, k\geq 0$ and  $1\leq r\leq \ell-1$, we have 
\begin{eqnarray*}
\lefteqn{H^0(X, {\cal Z}^n)}\\
&=& \left\{
\begin{array}{ll}
\Dirsum_{i=0}^{\ell-1}
  H^0(C, S^{-\frac{i(p+1)}{\ell}+k}({\cal E})
             \tensor {\N}^{ip+n})   
    & \mbox{if }n=k\ell\geq 0\\
  & \\
H^0(C, S^{r+1-k-\ell}({\cal E})\tensor {\N}^n) & \\
\dirsum 
\Dirsum_{i=1}^{\ell-1}
    H^0(C, S^{-\frac{i(p+1)}{\ell}+k+1}({\cal E})
           \tensor {\N}^{ip+n})
    & \mbox{if }n=k\ell+r> 0.\\
\end{array}
\right.
\end{eqnarray*}
\end{Proposition}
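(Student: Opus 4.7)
The plan is to mimic exactly the method used in Propositions~\ref{h2} and \ref{h1pos}, since the formula for $H^0$ has the same source as the formulas there. Namely, since $\psi: X \to P$ is an affine morphism, the Leray spectral sequence for $\psi$ degenerates, so together with the projection formula and the identity ${\cal Z}^n = \OO_X(n\tilde E) \tensor \psi^*\pi^*\N^n$ we obtain
\[
H^0(X, {\cal Z}^n) \;=\; H^0(P, \psi_*\OO_X(n\tilde E) \tensor \pi^*\N^n).
\]

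Next I would apply Lemma~\ref{tool1} to decompose $\psi_*\OO_X(n\tilde E)$ explicitly, splitting into the two cases $n = k\ell$ and $n = k\ell + r$ with $1 \le r \le \ell-1$. After tensoring each summand by $\pi^*\N^n$ and unfolding the definition ${\cal M}^i = \OO_P(-i(p+1)/\ell) \tensor \pi^*\N^{ip}$, every summand takes the form $\OO_P(a) \tensor \pi^*\N^{ip+n}$ for explicit integers $a \in \{-i(p+1)/\ell + k,\; -i(p+1)/\ell + k + 1,\; r+1+k-\ell\}$.

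Then I would invoke Lemma~\ref{prop:basicsRuledSurf}(ii), which for $a \geq 0$ gives
\[
H^0(P, \OO_P(a) \tensor \pi^*{\cal F}) \;\iso\; H^0(C, S^a({\cal E}) \tensor {\cal F}),
\]
while for $a < 0$ the left-hand side vanishes on every fiber $\iso \PP^1$, consistent with the convention $S^a({\cal E}) = 0$ from Lemma~\ref{prop:basicsRuledSurf}(i). Summing the direct summands over $i$ with this convention yields the claimed formula, where the summands with index making $a < 0$ simply drop out.

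There is essentially no obstacle here: the argument is parallel to and strictly easier than the proofs of the $H^1$ and $H^2$ formulas already given, since we do not have to distinguish indices for which $R^1 \pi_*$ contributes (as in the $H^1$ case) nor appeal to Serre duality. The only points needing care are the bookkeeping of indexing ranges and the convention on negative symmetric powers, both of which are handled exactly as in Proposition~\ref{h2}.
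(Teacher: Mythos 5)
Your proposal is correct and follows essentially the same route as the paper: reduce to $H^0(P,\psi_*\OO_X(n\tilde E)\tensor\pi^*\N^n)$ via the affine morphism $\psi$ and the projection formula, decompose with Lemma~\ref{tool1}, and identify each summand via $\pi_*\OO_P(a)=S^a({\cal E})$ with the convention $S^a({\cal E})=0$ for $a<0$ (the paper cites Lemma~\ref{prop:basicsRuledSurf}(i) where you cite (ii), but for $H^0$ these amount to the same computation). Your exponent $r+1+k-\ell$ in the first summand of the second case is in fact the one appearing in the paper's own proof and in the analogous Propositions~\ref{h2} and~\ref{h1pos}; the $r+1-k-\ell$ in the stated formula appears to be a typo.
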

\begin{proof} By Lemma~\ref{tool1}, we compute
\begin{eqnarray*}
H^0(X, {\cal Z}^n)
&= & H^0(P, \psi_*\OO_X(n\tilde{E})\tensor\pi^*{\N}^n)\\
&=&
\left\{
\begin{array}{ll}
\Dirsum_{i=0}^{\ell-1}
  H^0(C, \pi_*{\cal M}^i(kE)\tensor {\N}^n)
    & \mbox{if }n=k\ell \geq 0\\
 & \\
H^0(C, \pi_*\OO_P(r+1+k-\ell)\tensor {\N}^n) & \\
\dirsum 
\Dirsum_{i=1}^{\ell-1}
    H^0(C, \pi_*{\cal M}^i((k+1)E)\tensor {\N}^n)
    & \mbox{if }n=k\ell+r> 0\\
\end{array}
\right.\\
\end{eqnarray*}
Then apply Lemma~\ref{prop:basicsRuledSurf}(i).
\end{proof}

\begin{Remark}{\em
According to Proposition~\ref{h0}, we know that 
the lower bound $B$ such 
that $H^0(X, {\cal Z}^n)=0$ for $n\geq B$,
depends on the vanishing of 
cohomologies of type 
$H^0(C, S^{m}({\cal E})\tensor {\N}^n)$ for $m,n\gg 0$.
Hence it seems to be difficult to give a general 
estimation of $B$.
}
\end{Remark}

By the similar argument as in the proofs of 
Proposition~\ref{h2} and~\ref{h1pos}, we know that 
we have only to consider fewer direct summands 
than Proposition~\ref{h0} in some cases. Namely, 
\begin{itemize}
\item if $n=k\ell$ and $0\leq n<(p+1)(\ell-1)$, we have 
\begin{equation*}
H^0(X, {\cal Z}^n)
= \Dirsum_{i=0}^{\left[\frac{n}{p+1}\right]}
  H^0(C, S^{-\frac{i(p+1)}{\ell}+k}({\cal E})
             \tensor {\N}^{ip+n})   
\end{equation*}
\item  if $n=k\ell + r$, $0<r\leq \ell-1$, and
$0< n <  p(\ell-1)+r-1$, we have 
\begin{eqnarray*}
\lefteqn{H^0(X, {\cal Z}^n)}\\
& =& 
H^0(C, S^{r+1+k-\ell}({\cal E})\tensor {\N}^n) 
\dirsum 
\Dirsum_{i=1}^{\left[\frac{n+\ell-r}{p+1}\right]}
    H^0(C, S^{-\frac{i(p+1)}{\ell}+k+1}({\cal E})
           \tensor {\N}^{ip+n})
\end{eqnarray*}
\end{itemize}
Then we have 

\begin{Corollary}
\label{h0-refined}
$H^0(X, {\cal Z}^n)=0$
if $n = k\ell +r>0$ with $0< r\leq \ell-1$
and $0\leq k\leq 
\min\{\ell-r-2,\; \frac{p+1}{\ell}-2\}$.
\end{Corollary}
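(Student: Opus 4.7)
The plan is to deduce the corollary directly from the refined decomposition of $H^0(X,{\cal Z}^n)$ displayed in the bullets immediately preceding the statement. Every summand there is of the form $H^0(C,S^m({\cal E})\tensor \N^{\bullet})$, and by Lemma~\ref{prop:basicsRuledSurf}(i) we are free to treat $S^m({\cal E})=0$ whenever $m<0$. So the entire argument reduces to verifying that, under the stated hypotheses, the exponent $m$ of the symmetric power is strictly negative in every summand that appears.

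First I would isolate the first summand $H^0(C, S^{r+1+k-\ell}({\cal E})\tensor \N^n)$. Its symmetric-power exponent is negative exactly when $r+1+k-\ell<0$, i.e., $k\leq \ell-r-2$, which is the first bound in the $\min$.

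Next I would handle the direct sum
\begin{equation*}
\Dirsum_{i=1}^{\left[(n+\ell-r)/(p+1)\right]}
H^0\!\left(C,\; S^{-i(p+1)/\ell+k+1}({\cal E})\tensor \N^{ip+n}\right).
\end{equation*}
The exponent $-i(p+1)/\ell+k+1$ is integral (since $\ell\mid p+1$) and strictly decreasing in $i$, so the binding case is $i=1$, where we need $-(p+1)/\ell+k+1<0$, equivalently $k\leq (p+1)/\ell-2$. This is exactly the second bound. Moreover, under this condition the upper index $[(n+\ell-r)/(p+1)]=[(k+1)\ell/(p+1)]$ is already $0$, so the direct sum is empty and there is literally nothing left to check.

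There is no substantial obstacle; the only delicate point is the tight passage from the strict inequality $(k+1)\ell<p+1$ to the integer bound $k\leq (p+1)/\ell-2$, which works precisely because $\ell\mid p+1$ makes both sides integers. I would also verify that the hypotheses are compatible with the range $0<n<p(\ell-1)+r-1$ in which the refined decomposition is stated, but since $n=k\ell+r\leq (p+1)-2\ell+\ell-1=p-\ell\leq p-2$, this is immediate.
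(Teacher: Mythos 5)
Your proposal is correct and follows essentially the same route as the paper: both arguments read off from the refined decomposition that $H^0(X,{\cal Z}^n)$ vanishes once $r+1+k-\ell<0$ (killing the first summand) and $(k+1)\ell = n+\ell-r < p+1$ (emptying the direct sum), which together yield exactly $k\leq\min\{\ell-r-2,\,(p+1)/\ell-2\}$. Your closing check that the hypotheses keep $n$ inside the range of validity of the refined formula corresponds to the paper's third term $p-(p+1)/\ell$ in the $\min$, which it discards for the same reason ($\ell\geq 2$).
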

\begin{proof}
By the above formula for $n=k\ell +r$, $0<r\leq \ell-1$,
we know that 
$H^0(X, {\cal Z}^n)=0$ if 
$r+1+k-\ell <0$ and $n+\ell -r < p+1$.
From the latter inequation, we have 
\begin{equation*}
n= k\ell +r < \min\{p+1 - \ell +r,\;  p(\ell-1)+r-1\}
\end{equation*}
and then together with the former inequation we have 
\begin{equation*}
    k < \min\left\{
               \ell -r -1,\;
               \frac{p+1}{\ell}-1, \;
               p - \frac{p+1}{\ell}
           \right\}
      =  \min\{\ell-r-1,\; \frac{p+1}{\ell}-1\}
\end{equation*}
where the last equation is by $\ell\geq 2$.

\end{proof}

\section{Families of non-vanishing polarizations}

We have considered the Mumford-Szpiro type polarization
given in Proposition~\ref{prop:polarization}. 
Raynaud's example is also of this kind. 
We show that much more varieties of polarizations can serve 
as counter-examples to Kodaira vanishing. We fist consider 
\begin{equation*}
{\cal Z}_{a,b} :=  \OO_X(a\tilde{E})\tensor\phi^*{\N}^b
\quad(a,b\geq 1).
\end{equation*}

\begin{Proposition}
${\cal Z}_{a,b}$ is ample.
\end{Proposition}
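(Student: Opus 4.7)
My plan is to reduce the ampleness of ${\cal Z}_{a,b}$ to that of the Mumford-Szpiro polarization ${\cal Z}=\OO_X(\tilde E)\tensor\phi^*\N$, which is already known to be ample by Proposition~\ref{prop:polarization}. The key observation is that both $\OO_X(\tilde E)$ and $\phi^*\N$ are nef on $X$, so ${\cal Z}_{a,b}$ can be written as a tensor product of an ample line bundle and a nef line bundle.

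First I would check nefness of the two factors. For $\phi^*\N$: given any irreducible curve $H\subset X$, the projection formula gives $\phi^*\N\cdot H=(\deg \phi\vert_H)\cdot\deg\N\geq 0$ since $\N$ is ample on $C$ (as ${\N}^\ell={\cal L}=\OO_C(D)$ with $D$ ample). For $\OO_X(\tilde E)$: if $H\ne\tilde E$ then $\tilde E\cdot H\geq 0$ since distinct irreducible curves on a projective surface intersect non-negatively; if $H=\tilde E$ then $\tilde E^2=\deg D/\ell>0$ by Proposition~\ref{prop:E*E}. Hence both $\OO_X(\tilde E)$ and $\phi^*\N$ are nef.

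Next I split into cases according to whether $a\geq b$ or $a<b$. If $a\geq b$, I rewrite
\begin{equation*}
{\cal Z}_{a,b}={\cal Z}^b\tensor \OO_X((a-b)\tilde E),
\end{equation*}
which is the tensor product of the ample sheaf ${\cal Z}^b$ and the nef sheaf $\OO_X((a-b)\tilde E)$. If $a<b$, I rewrite
\begin{equation*}
{\cal Z}_{a,b}={\cal Z}^a\tensor\phi^*\N^{b-a},
\end{equation*}
again an ample sheaf tensored with a nef one. In either case, ampleness follows from the standard fact that (ample)$\,\tensor\,$(nef) is ample on a projective variety, which one can verify via the Nakai-Moishezon criterion directly on $X$, exactly in the spirit of Proposition~\ref{prop:ampleKx}.

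The only mild subtlety to be aware of is that $X$ need only be normal (not smooth) when $C$ is pre-Tango but not Tango, so intersection numbers must be interpreted in Mumford's sense for normal projective surfaces; but since $\tilde E$, $\phi^*\N$, and ${\cal Z}$ are all Cartier, the classical intersection theory suffices, and Nakai-Moishezon applies as in the earlier parts of the paper. I do not anticipate any real obstacle here; the argument is essentially a bookkeeping exercise once nefness of the two building blocks is established.
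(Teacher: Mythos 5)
Your proof is correct, but it takes a genuinely different route from the paper's. The paper argues directly: it claims $E^2=\deg D>0$ and $E.H>0$ for \emph{every} irreducible curve $H\subset P$, concludes via Nakai--Moishezon that $\OO_P(nE)$ and hence $\OO_X(a\tilde{E})=$ (a root of) $\psi^*\OO_P(aE)$ is ample, and then tensors with the semi-ample sheaf $\phi^*{\N}^b$. You instead only claim \emph{nefness} of the two factors $\OO_X(\tilde{E})$ and $\phi^*\N$, and import the ampleness of ${\cal Z}={\cal Z}_{1,1}$ from Theorem~\ref{thm:mumford-szpiro} via the decomposition ${\cal Z}_{a,b}={\cal Z}^{\min(a,b)}\tensor(\mbox{nef})$. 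Your caution is actually well placed: since $E\cap C''=\emptyset$, one has $E.C''=pE^2-p\deg D=0$, so the paper's assertion that $E.H>0$ for every irreducible curve fails for $H=C''$, and $\OO_X(a\tilde{E})$ alone is nef but \emph{not} ample ($\tilde{E}.\tilde{C}''=0$); the positivity on $\tilde{C}''$ really does have to come from the $\phi^*{\N}^b$ factor, exactly as your argument arranges. The trade-off is that your route leans on the ampleness of ${\cal Z}$ asserted via Mumford--Szpiro (stated there for smooth surfaces, while $X$ may be only normal); if you want to avoid that, note that your two nefness computations already give a self-contained Nakai--Moishezon proof: ${\cal Z}_{a,b}^2=a^2\deg D/\ell+2ab\,\tilde{E}.\phi^*\N>0$, and for any irreducible curve $H$ either $H$ lies in a fiber of $\phi$ (so $\tilde{E}.H>0$ because $\tilde{E}$ is a section) or $H$ dominates $C$ (so $\phi^*\N.H>0$), whence ${\cal Z}_{a,b}.H>0$ in all cases.
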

\begin{proof}
We have $E^2 = \deg D>0$ and also $E.C>0$ for every irreducible 
curve $C\in P$ (see Prop.~V.2.3 \cite{H}).
Thus $\OO_P(nE)$, $n>0$, is ample by Nakai-Moishezon criteria
and,  since $\psi:X\to P$ is a finite morphism, 
$\psi^*\OO_P(nE) = \OO_X(\ell n \tilde{E})$, $n>0$,
is also ample. 
In particular, $\OO_X(a\tilde{E})$, $a\geq 1$, 
is  ample. 
On the other hand, ${\N}^b = {\cal N}^{be/\ell}$, $b\geq 1$,
is ample so that in particular $\pi^*{\N}^b$
is semi-ample (i.e., its sufficiently large powers are generated by
global sections). 
Consequently, $\OO_X(a\tilde{E})\tensor\phi^*{\N}^{b}$ 
is ample. 
\end{proof}

Then, by carrying out a similar argument as 
the proofs of Theorem~\ref{h1neg} and Theorem~\ref{result1},
we have 
\begin{Theorem}
\label{prop:outkvce}
$H^1(X, {\cal Z}_{a,b}^{-1})\ne{0}$
for all $a\geq 1$ and $1\leq b\leq \ell -1$.
\end{Theorem}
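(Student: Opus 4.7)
The plan is to mimic the strategy used in the proofs of Theorem~\ref{h1neg} and Theorem~\ref{result1}, replacing ${\cal Z}^n$ by ${\cal Z}_{a,b}^{-1}=\OO_X(-a\tilde{E})\tensor\phi^*\N^{-b}$. First I would compute $\psi_*{\cal Z}_{a,b}^{-1} = \psi_*\OO_X(-a\tilde{E})\tensor\pi^*\N^{-b}$ via Lemma~\ref{tool2} and the projection formula, obtaining a direct sum decomposition on the ruled surface $P$ whose summands are twists of $\OO_P(-aE)$ and of the ${\cal M}^i$ by $\pi^*\N^{-b}$.

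Next I would verify that $\phi_*{\cal Z}_{a,b}^{-1} = \pi_*\psi_*{\cal Z}_{a,b}^{-1}$ vanishes: by Lemma~\ref{prop:basicsRuledSurf}$(i)$, each summand pushes forward under $\pi$ to $S^k({\cal E})\tensor\N^{\,\cdot}$ for some $k<0$, thanks to $a\geq 1$, $i\geq 1$, and $\ell\mid p+1$. Consequently $H^1(C,\phi_*{\cal Z}_{a,b}^{-1})=0$, and the five-term exact sequence of the Leray spectral sequence for $\phi$ collapses to
\begin{equation*}
H^1(X,{\cal Z}_{a,b}^{-1})\iso H^0(C, R^1\pi_*(\psi_*{\cal Z}_{a,b}^{-1})).
\end{equation*}
Exactly as in the closing paragraph of the proof of Theorem~\ref{h1neg}, I would then apply Lemma~\ref{prop:basicsRuledSurf}$(iii)$, the projection formula, and relative Serre duality with $\omega_{P/C}=\OO_P(-2)\tensor\pi^*{\cal L}$ to rewrite the ${\cal M}^i$-summand of $R^1\pi_*\psi_*{\cal Z}_{a,b}^{-1}$ as $S^{i(p+1)/\ell-2}({\cal E})^\vee\tensor\N^{ip-\ell-b}$.

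The non-vanishing would then follow the template of Theorem~\ref{result1}. Singling out the index $i=\ell-b$, which lies in $\{1,\ldots,\ell-1\}$ by the hypothesis $1\leq b\leq\ell-1$, a short arithmetic check shows that the $\N$-exponent equals $m\ell$ with $m=(\ell-b)(p+1)/\ell-2$, so this summand has precisely the shape $S^m({\cal E})^\vee\tensor\N^{m\ell}$. Dualizing the surjection $S^m({\cal E})\twoheadrightarrow {\cal L}^m=\N^{m\ell}$ coming from the exact sequence~(\ref{fundamentalExSeq3}) gives an inclusion $\OO_C\hookrightarrow S^m({\cal E})^\vee\tensor\N^{m\ell}$, yielding a nonzero global section and hence $H^1(X,{\cal Z}_{a,b}^{-1})\neq 0$.

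The main technical obstacle will be carefully bookkeeping the exponents through the twists introduced by $\pi^*\N^{-b}$ and the identifications ${\cal L}=\N^\ell$, ${\cal N}^{e/\ell}=\N$; in particular, one must confirm that the $\OO_P(-aE)$-summand, which contributes $S^{a-2}({\cal E})^\vee\tensor\N^{-\ell-b}$, has strictly negative twist on $\N$ and hence contributes nothing to $H^0$, so it cannot cancel the nonzero section found above. This is the reason that $a$ plays no essential role in the argument and the conclusion holds uniformly for every $a\geq 1$.
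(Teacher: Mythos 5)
Your proposal reproduces the paper's own argument almost step for step: the Leray spectral sequence for $\phi$, the reduction to $H^0(C,R^1\phi_*{\cal Z}_{a,b}^{-1})$, the decomposition of $R^1\phi_*\OO_X(-a\tilde{E})$ via Lemma~\ref{tool2} together with relative Serre duality, and the choice of the summand $i=\ell-b$ combined with the dual of the surjection $S^m({\cal E})\To{\cal L}^m\To 0$ to produce a nonzero section. The only cosmetic differences are that you prove $\phi_*{\cal Z}_{a,b}^{-1}=0$ so as to get an isomorphism, whereas the paper only uses the surjection $H^1(X,{\cal Z}_{a,b}^{-1})\To H^0(C,R^1\phi_*{\cal Z}_{a,b}^{-1})\To 0$, which is automatic from $\dim C=1$; and that your worry about the $\OO_P(-aE)$ summand ``cancelling'' the section is unnecessary, since $H^0$ of a finite direct sum is the direct sum of the $H^0$'s and a nonzero section of one summand cannot be destroyed by another.

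There is, however, one point at which your argument (and, in exactly the same way, the paper's) is incomplete. The surjection $S^m({\cal E})\To{\cal L}^m\To 0$ exists only for $m\geq 0$; for $m<0$ the summand $S^m({\cal E})^\vee\tensor(\cdots)$ is zero by the paper's convention $S^m({\cal E})=0$. With $i=\ell-b$ one has $m=(\ell-b)(p+1)/\ell-2$, and since $\ell\mid p+1$ this is negative precisely when $\ell=p+1$ and $b=\ell-1$. In that case the chosen summand vanishes and the argument yields nothing; worse, for $(p,\ell)=(2,3)$ and $a=b=2$ one has ${\cal Z}_{2,2}^{-1}={\cal Z}^{-2}$, and Example~\ref{h1neg-detailed} gives $H^1(X,{\cal Z}^{-2})=0$, so the asserted non-vanishing actually fails there. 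Your proof therefore establishes the theorem only under the additional constraint $(\ell-b)(p+1)\geq 2\ell$, i.e. $b\leq\ell-\lceil 2\ell/(p+1)\rceil$ --- the same restriction that governs Theorem~\ref{result1}. You should either impose this hypothesis or treat the excluded case separately; as written, the gap is inherited from the paper rather than introduced by you, but it is a genuine gap nonetheless.
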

\begin{proof}
Let ${\cal Q}$ be an invertible sheaf on $C$  and set 
${\cal Z}=\OO_X(a\tilde{E})\tensor\phi^*{\cal Q}$. Now consider 
the following Leray spectral sequence of $\phi:X\To C$
\begin{equation*}
E^{p,q}_2 
= H^p(C, R^{q}\phi_*{\cal Z}^{-1})
\Rightarrow H^{p+q}(X, {\cal Z}^{-1})
\quad(p\geq 0).
\end{equation*}
We have $E_2^{2,0}=0$ since $\dim C=1$. Then 
by the 5-term exact sequence we have 
\begin{equation*}
  H^1(X, {\cal Z}^{-1})
 \To H^0(C, R^1\phi_*{\cal Z}^{-1})
 \To 0.
\end{equation*}
Thus we have only to show 
$H^0(C, R^1\phi_*{\cal Z}^{-1})
=H^0(C, R^1\phi_*\OO_X(-a\tilde{E}) \tensor{\cal Q}^{-1})
\ne{0}$.
Considering the 5-term exact sequence 
\begin{equation*}
0\To R^1\pi_*(\psi_*\OO_X(-a\tilde{E}))
 \To R^1(\pi\circ\psi)_*\OO_X(-a\tilde{E})
 \To \pi^*(R^1\psi_*\OO_X(-a\tilde{E})),
\end{equation*}
where $R^1\psi_*\OO_X(-a\tilde{E})=0$ since $\psi: X\to P$
is an affine morphism, we have 
\begin{equation*}
 R^1\phi_*\OO_X(-a\tilde{E})
= R^1(\pi\circ\psi)_*\OO_X(-a\tilde{E})
 \iso
 R^1\pi_*(\psi_*\OO_X(-a\tilde{E})).
\end{equation*}
Thus by Lemma~\ref{tool2} we obtain
\begin{eqnarray*}
R^1\phi_*\OO_X(-a\tilde{E})
&=& R^1\pi_*\OO_P(-aE)\; \dirsum\; 
\Dirsum_{i=1}^{\ell-1}(R^1\pi_*{\cal M}^i)\\
&=& 
R^1\pi_*\OO_P(-aE)\; \dirsum\; 
\Dirsum_{i=1}^{\ell-1}(S^{(ip+i-2\ell)/\ell}({\cal E})
       \tensor {\cal N}^{(\ell-ip)e/\ell}
          )^\vee.
\end{eqnarray*}
We note that the last equation is shown
in the end of the proof of Theorem~\ref{h1neg}.
Thus  we have 
\begin{equation*}
H^0(C, R^1\phi_*\OO_X(-a\tilde{E})\tensor {\cal Q}^{-1})
\supseteq \Dirsum_{i=1}^{\ell-1}
      H^0(C, (S^{(ip+i-2\ell)/\ell}({\cal E})
       \tensor {\cal N}^{(\ell-ip)e/\ell}
          )^\vee \tensor {\cal Q}^{-1}).
\end{equation*}
(Actually we can show that this inclusion is really an equation.)

On the other hand, from ${\cal E}\To {\cal L}\To 0$ we have 
\begin{equation*}
S^{(ip+i-2\ell)/\ell}({\cal E})
\tensor {\cal N}^{(\ell-ip)e/\ell}
\To 
{\cal L}^{(ip+i-2\ell)/\ell}
\tensor {\cal N}^{(\ell-ip)e/\ell}
\To 0.
\end{equation*}
Taking the dual and tensoring by ${\cal Q}^{-1}$, we have 
\begin{equation*}
0\To 
({\cal L}^{(ip+i-2\ell)/\ell}
\tensor {\cal N}^{(\ell-ip)e/\ell})^\vee\tensor {\cal Q}^{-1}
\To 
(S^{(ip+i-2\ell)/\ell}({\cal E})
\tensor {\cal N}^{(\ell-ip)e/\ell})^\vee\tensor {\cal Q}^{-1}
\end{equation*}
and 
\begin{equation*}
({\cal L}^{(ip+i-2\ell)/\ell}
\tensor {\cal N}^{(\ell-ip)e/\ell})^\vee
= ({\cal N}^{(ip+i-2\ell)e/\ell}
\tensor {\cal N}^{(\ell-ip)e/\ell})^\vee
= {\cal N}^{(\ell-i)e/\ell}.
\end{equation*}
Thus we have  for $i=1,\ldots, \ell-1$
\begin{equation*}
H^0(C,{\cal N}^{(\ell-i)e/\ell}\tensor {\cal Q}^{-1})
\subset H^0(C, R^1\phi_*\OO_X(-a\tilde{E})\tensor {\cal Q}^{-1}).
\end{equation*}
In particular, 
taking ${\cal Q}= {\cal N}^{\frac{(\ell-i)e}{\ell}}
= {\N}^{\ell-i}$ with $i=1,\ldots, \ell-1$, 
we have
$k 
\subset 
H^0(C, R^1\phi_*\OO_X(-a\tilde{E})\tensor {\cal Q}^{-1})$
as required and in this case 
${\cal Z}$ is exactly what we defined.
\end{proof}

The cohomologies 
$H^i(X, {\cal Z}_{a,b}^{n})$, $i,n\in\ZZ$, can also be computed 
by a similar method to what we have described.



\end{document}